\numberwithin{equation}{section}
\numberwithin{figure}{section}
\theoremstyle{plain}
\newtheorem{thm}{Theorem}[section]
\theoremstyle{definition}
\newtheorem{rem}[thm]{Remark}
\theoremstyle{definition}
\newtheorem{defn}[thm]{Definition}
\theoremstyle{plain}
\newtheorem{prop}[thm]{Proposition}
\theoremstyle{plain}
\newtheorem{lem}[thm]{Lemma}
\theoremstyle{plain}
\newtheorem*{SSOYconjecture}{SSOY predictability conjecture}
\theoremstyle{plain} 
\newtheorem{cor}[thm]{Corollary}
\theoremstyle{definition}
\theoremstyle{definition}
\theoremstyle{definition}
\theoremstyle{definition}
\newtheorem*{convention}{Convention}
\theoremstyle{definition}
\newtheorem*{Claim1}{Claim 1}
\newtheorem*{Claim2}{Claim 2}
\newtheorem*{Claim3}{Claim 3}
\newtheorem*{Claim4}{Claim 4}
\DeclareMathOperator{\dist}{dist}
\DeclareMathOperator{\Leb}{Leb}
\DeclareMathOperator{\supp}{supp}
\DeclareMathOperator{\Var}{Var}
\newcommand{\R}{\mathbb R}
\newcommand{\Z}{\mathbb Z}
\newcommand{\N}{\mathbb N}
\newcommand{\Q}{\mathbb Q}
\newcommand{\B}{\mathbb B}
\renewcommand{\Re}{\textup{Re}}
\newcommand{\eps}{\varepsilon}
\newcommand{\mH}{\mathcal{H}}
\newcommand{\hdim}{\dim_H}
\newcommand{\uid}{\overline{\idim}}
\newcommand{\lid}{\underline{\idim}}
\DeclareMathOperator{\Lip}{Lip}
\DeclareMathOperator{\Per}{Per}
\DeclareMathOperator{\idim}{ID}
\begin{document}

\title{On the Shroer--Sauer--Ott--Yorke predictability conjecture for time-delay embeddings}

\author[K. Bara\'{n}ski]{Krzysztof Bara\'{n}ski$^1$}
\address{$^1$Institute of Mathematics, University of Warsaw, ul.~Banacha 2, 02-097 Warszawa, Poland}
\email{baranski@mimuw.edu.pl}

\author[Y. Gutman]{Yonatan Gutman$^2$}
\address{$^2$Institute of Mathematics, Polish Academy of Sciences,
ul.~\'Sniadeckich 8, 00-656 Warszawa, Poland}
\email{y.gutman@impan.pl}

\author[A. \'{S}piewak]{Adam \'{S}piewak$^3$}
\address{$^3$Department of Mathematics, Bar-Ilan University, Ramat Gan, 5290002, Israel}
\email{ad.spiewak@gmail.com}

\begin{abstract}
Shroer, Sauer, Ott and Yorke conjectured in 1998 that the Takens delay embedding theorem can be improved in a probabilistic context. More precisely, their conjecture states that if $\mu$ is a natural measure for a smooth diffeomorphism of a Riemannian manifold and $k$ is greater than the information dimension of $\mu$, then $k$ time-delayed measurements of a one-dimensional observable $h$ are generically sufficient for a \emph{predictable} reconstruction of $\mu$-almost every initial point of the original system. This reduces by half the number of required measurements, compared to the standard (deterministic) setup. We prove the conjecture for ergodic measures and  show that it holds for a generic smooth diffeomorphism, if the information dimension is replaced by the Hausdorff one. To this aim, we prove a general version of predictable embedding theorem for injective Lipschitz maps on compact sets and arbitrary Borel probability measures. We also construct an example of a $C^\infty$-smooth diffeomorphism with a natural measure, for which the conjecture does not hold in its original formulation.
\end{abstract}

\keywords{Takens delay embedding theorem, probabilistic embedding, predictability, information dimension, Hausdorff dimension}

\subjclass[2020]{Primary 37C45, 37C40. Secondary 58D10}

\maketitle
\section{Introduction}\label{sec:intro}

\subsection{General background}
This paper concerns probabilistic aspects of the \emph{Takens delay embedding theorem}, dealing with the problem of reconstructing a dynamical system from a sequence of measurements of a one-dimensional observable. More precisely, let $T\colon X \to X$ be a transformation on a \emph{phase space} $X$. Fix $k \in \N$ and consider a function (\emph{observable}) $h \colon X \to \R$ together with the corresponding $k$-\emph{delay coordinate map} 
\[
\phi \colon X \to \R^k, \qquad \phi(x) = (h(x), \ldots, h(T^{k-1}x)).
\]
Takens-type delay embedding theorems state that if $k$ is large enough, then $\phi$ is an embedding (i.e.~is injective) for a typical observable $h$. The injectivity of $\phi$ ensures that an (unknown) initial state $x \in X$ of the system can be uniquely recovered from the sequence of $k$ measurements $h(x), \ldots, h(T^{k-1}x)$ of the observable $h$, performed along the orbit of $x$. It also implies that the dynamical system $(X,T)$ has a reliable model in $\R^k$ of the form $(\tilde X, \tilde T) = (\phi(X), \phi \circ T \circ \phi^{-1})$. 

This line of research originates from the seminal paper of Takens \cite{T81} on diffeomorphisms of compact manifolds. Extensions of Takens' work were obtained in several categories, e.g.~in \cite{SYC91, 1999delay, CaballeroEmbed, Rob05, Gut16, GQS18, StarkEmbedSurvey, StarkStochEmbed, NV18} (see also \cite{Rob11, BGS19} for a more detailed overview). A common feature of these results is that the minimal number of measurements sufficient for an exact reconstruction of the system is $k \approx 2\dim X $, where $\dim X $ is the dimension of the phase space $X$. This threshold agrees with the one appearing in the classical non-dynamical embedding theorems (e.g.~Whitney theorem \cite{Whitney36}, Menger--N\"{o}beling theorem \cite[Theorem~V.2]{HW41} and Ma\~{n}\'{e} theorem \cite[Theorem 6.2]{Rob11}). It is worth to notice that Takens-type theorems serve as a justification of the validity of time-delay based procedures, which are actually used in applications (see e.g.~\cite{hgls05distinguishing, KY90,sgm90distinguishing,sm90nonlinear}) and have been met with a great interest among mathematical physicists (see e.g.~\cite{PCFS80, HBS15, SYC91, Voss03}).

In 1998, Shroer, Sauer, Ott and Yorke conjectured (see \cite[Conjecture 1]{SSOY98}), that for smooth diffeomorphisms on compact manifolds, in a probabilistic setting (i.e.~when the initial point $x \in X$ is chosen randomly according to a \emph{natural} probability measure $\mu$), the number of measurements required for an almost sure \emph{predictable} reconstruction of the system can be generically reduced by half, up to the \emph{information dimension} of $\mu$. A precise formulation is given below in Subsection~\ref{subsec:SSOYconj}. We will refer to this conjecture as \emph{Shroer--Sauer--Ott--Yorke predictability conjecture} or \emph{SSOY predictability conjecture}.
In \cite{SSOY98}, the authors provided some heuristic arguments supporting the conjecture together with its numerical verification for some examples (H\'enon and Ikeda maps). However, a rigorous proof of the conjecture has been unknown up to now. 

In this paper, we prove a general version of a predictable embedding theorem (Theorem~\ref{thm:conj_true}), valid for injective Lipschitz transformations of compact sets and arbitrary Borel probability measures, which shows that an almost sure predictable reconstruction of the system is possible with the number of measurements reduced to the \emph{Hausdorff dimension} of $\mu$, under a mild assumption bounding the dimensions of sets of periodic points of low periods. As a corollary, we obtain the SSOY predictability conjecture for generic smooth $C^r$-diffeomorphisms on compact manifolds for $r \geq 1$, with information dimension replaced by the Hausdorff one (Corollary~\ref{cor:conj_true-generic}) and the original conjecture for arbitrary $C^r$-diffeomorphisms and ergodic measures (Corollary~\ref{cor:conj_true-ergodic}). We also construct an example of a $C^\infty$-smooth diffeomorphism of a compact Riemannian manifold with a non-ergodic natural measure, for which the original conjecture does not hold (Theorem~\ref{thm:counterexample_main}). This shows that in a general case, the change of the information dimension to the Hausdorff one is necessary.

Let us note that the SSOY predictability conjecture has been invoked in a number of papers (see e.g.~\cite{Liu10, McSharrySmith04, OrtegaLouis98}) as a theoretical argument for reducing the number of measurements required for a reliable reconstruction of the system, also in applications (see e.g.~\cite{Epilepsy} studying neural brain activity in focal epilepsy). Our result provides a mathematically rigorous proof of the correctness of these procedures. 

\subsection{Shroer--Sauer--Ott--Yorke predictability conjecture}\label{subsec:SSOYconj}
Before we formulate the conjecture stated in \cite{SSOY98} in a precise way, we need to introduce some preliminaries, in particular the notion of predictability. In the sequel, we consider a general situation, when the phase space $X$ is an arbitrary compact set in $\R^N$ (note that by the Whitney embedding theorem \cite{Whitney36}, we can assume that a smooth compact manifold is embedded in $\R^N$ for sufficiently large $N$). We denote the (topological) support of a measure $\mu$ by $\supp \mu$ and write $\phi_*\mu$ for a push-forward of $\mu$ by a measurable transformation $\phi$, defined by $\phi_*\mu(A) = \mu(\phi^{-1}(A))$ for measurable sets $A$.

\begin{defn}\label{def:predictability} Let $X \subset \R^N$ be a compact set, let $\mu$  be a Borel probability measure with support in $X$ and let $T \colon X \to X$ be a Borel transformation (i.e.~such that the preimage of any Borel set is Borel). Fix $k \in \N$. Let $h \colon X \to \R$ be a Borel observable and let $\phi \colon X \to \R^k$ given by $\phi (x) = (h(x), \ldots, h(T^{k-1}x))$ be the corresponding $k$-delay coordinate map. Set $\nu = \phi_*\mu$ (considered as a Borel  measure in $\R^k$) and note that $\supp \nu \subset \phi(X)$. For $y \in \supp \nu$ and $\eps>0$ define
\begin{align*}
\chi_{\eps}(y) &= \frac{1}{\mu\big(\phi^{-1}(B(y, \eps))\big)} \int \limits_{\phi^{-1}(B(y, \eps))} \phi(Tx)d\mu(x),\\
\sigma_{\eps}(y) &= \bigg(\frac{1}{\mu\big(\phi^{-1}(B(y, \eps))\big)} \int \limits_{\phi^{-1}(B(y, \eps))} \|\phi(Tx) -\chi_{\eps}(y)\|^2d\mu(x)\bigg)^{\frac{1}{2}},
\end{align*}
where $B(y,\eps)$ denotes the open ball of radius $\eps$ centered at $y$.
In other words, $\chi_{\eps}(y)$ is the conditional expectation of the random variable $\phi \circ T$ (with respect to $\mu$) given $\phi \in B(y, \eps)$, while $\sigma_{\eps}(y)$ is its conditional standard deviation. Define also the \emph{prediction error} at $y$ as
\[ \sigma(y) = \lim \limits_{\eps \to 0} \sigma_{\eps}(y), \]
provided the limit exists. A point $y$ is said to be \emph{predictable} if $\sigma(y)=0$.
\end{defn}

Note that the prediction error depends on the observable $h$. We simplify the notation by suppressing this dependence. 

\begin{rem}
Note that the predictability of points of the support of the measure $\nu$ does not imply that the delay coordinate map $\phi$ is injective. Indeed, if $h$ (and hence $\phi$) is constant, then every point $y \in \supp \nu$ is predictable.
\end{rem}

\begin{rem}[{\bf Farmer and Sidorowich algorithm}{}] 
As explained in \cite{SSOY98}, the notion of predictability arises naturally in the context of a prediction algorithm proposed by Farmer and Sidorowich in \cite{FarmerSidorowich87}. To describe it, suppose that for a point $x \in X$ we are given a sequence of measurements $h(x), \ldots, h(T^{n+k-1}(x))$ of the observable $h$ for some $n \in \N$. This defines a sequence of $k$-delay coordinate vectors of the form
\[
y_i = (h(T^ix), \ldots, h(T^{i+k-1}x)), \qquad i = 0, \ldots, n.
\]
Knowing the sample values of $y_0, \ldots, y_n$, we would like to predict the one-step future of the model, i.e.~the value of the next point $y_{n+1} = (h(T^{n+1}x), \ldots, h(T^{n+k}x))$. For a small $\eps>0$ we define the predicted value of $y_{n+1}$ as
\[ \widehat{y_{n+1}} = \frac{1}{\#\mathcal I} \sum_{i \in \mathcal I} y_{i+1} \quad \text{for} \quad \mathcal I = \{ 0 \leq i < n : y_i \in B(y_n, \eps) \}. \]
In other words, the predicted value of $y_{n+1}$ is taken to be the average of the values $y_{i+1}$, where we count only those $i$, for which $y_i$ are $\eps$-close to the last known point $y_n$. 

Notice that if the $k$-delay coordinate map $\phi$ is an embedding, then the points $y_i$ form an orbit of $y_0$ under the model transformation $\tilde T$ defined by the delay coordinate map $\phi$, i.e.~$y_i = \tilde T^i(y_0)$ for $(\tilde X, \tilde T) = (\phi(X), \phi\circ T \circ \phi^{-1})$. Hence, in this case the predicted value $y_{n+1} = \tilde T(y_n)$ is the average of the values $y_{i+1} = \tilde T(y_i)$, $i \in \mathcal I$. 

If the initial point $x \in X$ is chosen randomly according to an ergodic probability measure $\mu$, then for $n \to \infty$, the collection of points $y_i,\ i \in \mathcal{I}$ is asymptotically distributed in $B(y_n, \eps)$ according to the measure $\nu = \phi_*\mu$. Therefore, the value of $\sigma_\eps(y_n)$ from Definition~\ref{def:predictability} approaches asymptotically the standard deviation of the predicted point $\widehat{y_{n+1}}$. The condition of predictability states that this  standard deviation converges to zero as $\eps$ tends to zero.
\end{rem}

In \cite{SSOY98}, the Shroer--Sauer--Ott--Yorke predictability conjecture is stated for a special class of measures, called \emph{natural measures}. To define it, recall first that a measure $\mu$ on $X$ is \emph{invariant} for a measurable map $T\colon X \to X$ if $\mu(T^{-1}(A))=\mu(A)$ for every measurable set $A \subset X$. A set $\Lambda \subset X$ is called \emph{$T$-invariant} if $T(\Lambda) \subset \Lambda$.
\begin{defn}
	Let $X$ be a compact Riemannian manifold and $T \colon X \to X$ be a smooth diffeomorphism. A compact $T$-invariant set $\Lambda \subset X$ is called an \emph{attractor}, if the set $B(\Lambda) = \{ x \in X : \lim_{n \to \infty} \dist(T^n x, \Lambda) = 0 \}$ is an open set containing $\Lambda$. The set $B(\Lambda)$ is called the \emph{basin of attraction} to $\Lambda$. A $T$-invariant Borel probability measure $\mu$ on $\Lambda$ is called a \emph{natural measure} if 
	\[ \lim \limits_{n \to \infty} \frac{1}{n} \sum \limits_{i=0}^{n-1} \delta_{T^i x} = \mu \]
	for almost every $x \in B(\Lambda)$ with respect to the volume measure on $X$, where $\delta_y$ denotes the Dirac measure at $y$ and the limit is taken in the weak-$^*$ topology.
\end{defn}

\begin{rem}
Note that in ergodic theory of dynamical systems, some authors use the name \emph{physical measure} or \emph{SRB $($Sinai--Ruelle--Bowen$)$ measure} for similar concepts (see e.g. \cite{Y02}). The term `natural measure' occurs commonly in mathematical physics literature (see e.g.~\cite{Ott02,OttYorke08}).
\end{rem}

\begin{defn}
For a Borel probability measure $\mu$ in $\R^N$ with compact support define its \emph{lower} and \emph{upper information dimensions} as
\footnote{Information dimensions are often defined in an equivalent way as
		\[ \lid(\mu) = \liminf \limits_{\eps \to 0} \frac{1}{\log \eps} \sum \limits_{C \in \mathcal{C}_{\eps}}\mu(C)\log\mu(C), \qquad \uid (\mu) = \limsup \limits_{\eps \to 0} \frac{1}{\log \eps} \sum \limits_{C \in \mathcal{C}_{\eps}}\mu(C)\log\mu(C),\] 
		where $\mathcal{C}_{\eps}$ is the partition of $\R^N$ into cubes with side lengths $\eps$ and vertices in the lattice $(\eps\Z)^N$ (see e.g.~\cite[Appendix I]{WV10}).}
    
\[ \lid(\mu) = \liminf \limits_{\eps \to 0} \int \limits_{\supp \mu} \frac{\log \mu(B(x,\eps))}{\log \eps} d\mu(x), \qquad\uid(\mu) = \limsup \limits_{\eps \to 0} \int \limits_{\supp \mu} \frac{\log \mu(B(x,\eps))}{\log \eps} d\mu(x).\]
If $\lid(\mu) = \uid(\mu)$, then we denote their common value as $\idim (\mu)$ and call it the \emph{information dimension} of $\mu$. 
\end{defn}

We are now ready to state the SSOY predictability conjecture in its original form as stated in \cite{SSOY98}. Recall that for a map $T\colon X \to X$ with a Borel probability measure $\mu$, a number $k \in \N$ and a function $h \colon X \to \R$, we consider  the $k$-delay coordinate map for the observable $h$ defined by
\[
\phi(x) =  \phi_{h,k} (x) = (h(x), \ldots, h(T^{k-1}x)).
\]
To emphasize the dependence on $h$ and $k$, we will write $\phi_{h,k}$ for $\phi$ and $\nu_{h,k}$ for the push-forward measure $\nu = \nu_{h,k} = (\phi_{h,k})_*\mu$. 
 
\begin{SSOYconjecture}[{\cite[Conjecture 1]{SSOY98}}]\label{con:1}
Let $T\colon X \to X$ be a smooth diffeomorphism of a compact Riemannian manifold $X$ and let $\Lambda \subset X$ be an attractor of $T$ with a natural measure $\mu$ such that $\idim(\mu) = D$. Fix $k>D$. Then $\nu_{h,k}$-almost every point of $\R^k$ is predictable for a generic observable $h\colon X \to \R$.
\end{SSOYconjecture}

Note that in this formulation some details (e.g.~the type of genericity and the smoothness class of the dynamics) are not specified precisely. 

\subsection{Main results}

Now we present the main results of the paper. First, we state a predictable embedding theorem, which holds in a general context of injective Lipschitz maps $T$ on a compact set $X \subset \R^N$ equipped with a Borel probability measure $\mu$. Recall that by the Whitney embedding theorem \cite{Whitney36}, we can assume that a smooth compact manifold is embedded in $\R^N$ for sufficiently large $N$. Our observation is that in this generality, the predictability  holds if we replace the information dimension $\idim(\mu)$ by the \emph{Hausdorff dimension} $\dim_H \mu$ (see Subsection~\ref{subsec:dim} for definition).  

In the presented results, we understand the genericity of the observable $h$ in the sense of \emph{prevalence} in the space $\Lip(X)$ of Lipschitz observables $h \colon X \to \R$ (with a polynomial \emph{probe set}), which is an analogue of the `Lebesgue almost sure' condition in infinite dimensional spaces (see Subsection~\ref{subsec:prev} for precise definitions). In particular, the genericity of $h$ holds also in the sense of prevalence in the space of $C^r$-smooth observables $h \colon X \to \R$, for $r \ge 1$. Let us note that it is standard to use prevalence as a notion of genericity in the context of Takens-type embedding theorems (see e.g.~\cite{SYC91, Rob11}).

It is known that Takens-type theorems require some bounds on the size of sets of $T$-periodic points of low periods. Following \cite{BGS19}, we assume $\hdim(\mu|_{\Per_p(T)}) < p$ for $p=1, \ldots, k-1$, where 
\[
\Per_p(T) = \{ x \in X : T^p x= x \}.
\]
With these remarks, our main result is the following. 

\begin{thm}[{\bf Predictable embedding theorem for Lipschitz maps}{}]\label{thm:conj_true}
Let $X \subset \R^N$ be a compact set, let $\mu$ be a Borel probability measure on $X$ and let $T\colon X \to X$ be an injective Lipschitz map. Take $k>\hdim\mu$ and assume $\hdim(\mu|_{\Per_p(T)}) < p$ for $p=1, \ldots, k-1$. Then for a prevalent set of Lipschitz observables $h\colon X \to \R$, the $k$-delay coordinate map $\phi_{h,k}$ is injective on a Borel set of full $\mu$-measure, and $\nu_{h,k}$-almost every point of $\R^k$ is predictable.
\end{thm}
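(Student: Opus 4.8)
The plan is to prove injectivity of $\phi_{h,k}$ on a full-measure set via a prevalence argument, and then deduce predictability from injectivity plus a measure-theoretic continuity argument. For the injectivity part, the natural strategy follows the scheme of probabilistic Takens-type theorems (as in \cite{SYC91, Rob11, BGS19}): one wants to show that for a prevalent $h \in \Lip(X)$, the set of pairs $(x,x')$ with $x \neq x'$ but $\phi_{h,k}(x) = \phi_{h,k}(x')$ is $\mu \times \mu$-null, and moreover that one can discard a $\mu$-null set of $x$ to obtain an injective restriction. The key mechanism is a Fubini/probe-set computation: writing $h = h_0 + \sum a_j p_j$ with $\{p_j\}$ a polynomial probe set and integrating over the coefficients $(a_j)$ with respect to a compactly supported measure, one shows that for each fixed pair of well-separated points the probability (in the probe parameters) of a near-collision of scale $\eps$ is $O(\eps^k)$; here the hypothesis $k > \hdim\mu$ lets one cover $\supp\mu$ efficiently and sum these estimates, while the periodic-point hypothesis $\hdim(\mu|_{\Per_p(T)}) < p$ handles the degenerate pairs $(x, T^{-j}x)$ on which the $k$ delay coordinates are not independent. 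A Borel–Cantelli argument across a geometric sequence of scales then gives, for prevalent $h$, a full-measure Borel set on which $\phi_{h,k}$ is injective.

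Once injectivity on a full-measure Borel set $X_0$ is established, predictability should follow almost formally. The point is that $\sigma_\eps(y)$ measures the conditional dispersion of $\phi \circ T$ given $\phi \in B(y,\eps)$; if $\phi$ is injective on $X_0$, then as $\eps \to 0$ the set $\phi^{-1}(B(y,\eps)) \cap X_0$ shrinks (in the $\mu$-essential sense) to the single point $\phi^{-1}(y)$, so the conditional law of $\phi \circ T$ concentrates at $\phi(T\phi^{-1}(y))$ and its standard deviation tends to $0$. Making this rigorous requires a martingale-type or Lebesgue-differentiation argument: for $\nu$-a.e.\ $y$ the conditional expectations $\chi_\eps(y)$ converge and the second moments converge to the square of the first moment. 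I would invoke the fact that the map $y \mapsto \phi(T\phi^{-1}(y))$, defined $\nu$-a.e.\ on $\phi(X_0)$, is Borel measurable (using injectivity and that $T$, $\phi$ are Borel), and then apply a differentiation theorem for measures in $\R^k$ — e.g.\ that for $g \in L^2(\nu)$ one has $\frac{1}{\nu(B(y,\eps))}\int_{B(y,\eps)} |g - g(y)|^2 \, d\nu \to 0$ for $\nu$-a.e.\ $y$ — applied coordinatewise to $g = \phi \circ T \circ \phi^{-1}$. This yields $\sigma_\eps(y) \to 0$, i.e.\ $\sigma(y) = 0$, for $\nu$-a.e.\ $y$.

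The main obstacle, I expect, is the injectivity estimate in the presence of the periodic-point complications, and in particular organizing the covering/summation so that the exponent $k$ in the collision probability is matched against $\hdim\mu$ rather than against a box-counting dimension (which would be weaker and would not give the sharp threshold $k > \hdim\mu$). The Hausdorff dimension is genuinely delicate here: one cannot simply cover $\supp\mu$ by $N(\eps) \approx \eps^{-D}$ balls, since for Hausdorff dimension such a covering need only exist along a sequence of scales and with a non-uniform geometry. One therefore needs to work with a well-chosen countable family of coverings realizing $\hdim\mu$, restrict to a full-measure subset of $X$ where the local behavior is controlled, and run the Borel–Cantelli argument on that subset; decomposing $X$ according to the periods $p = 1, \dots, k-1$ and treating $\mu|_{\Per_p(T)}$ separately (where the effective number of independent coordinates drops to $k - p > \hdim(\mu|_{\Per_p(T)})$... wait, one needs $p > \hdim(\mu|_{\Per_p})$, giving enough room) is the technical heart. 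A secondary subtlety is the passage from "null set of colliding pairs" to "genuine injectivity after removing a null set of points", which requires a Fubini argument combined with the observation that the bad set is contained in a countable union of graphs; I would handle this by the standard device of showing the fibers of the projection of the bad set are $\mu$-null and invoking completeness of $\mu$.
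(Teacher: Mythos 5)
Your proposal is correct, and the predictability half takes a genuinely different route from the paper. The paper does not re-derive the injectivity part at all: it directly cites the probabilistic Takens theorem from \cite{BGS19} (Theorem 4.3 there), so your first paragraph is sketching a proof the paper simply outsources; you might as well invoke that result and move on. For the predictability part, the paper goes through Simmons' topological Rokhlin disintegration theorem: it verifies (via the Lusin--Suslin theorem, that a continuous injective image of a Borel set is Borel) that the family $y \mapsto \delta_{x_y}$ is a legitimate system of conditional measures of $\mu$ with respect to $\phi_{h,k}$, invokes uniqueness to conclude that the disintegration limits $\mu_y = \lim_\eps \mu_{y,\eps}$ are Diracs, and then uses a separate lemma identifying $\sigma(y)$ with $\sqrt{\Var_{\mu_y}(\phi\circ T)}$. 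You instead push $\phi\circ T$ forward to $g = \phi\circ T\circ\phi^{-1}\in L^\infty(\nu)$ on a full-$\nu$-measure set and apply the Besicovitch/Lebesgue differentiation theorem for Radon measures on $\R^k$ directly, obtaining $\sigma_\eps(y)\to 0$ for $\nu$-a.e.\ $y$ without ever introducing conditional measures. This is a cleaner and somewhat more elementary argument: it cuts through to the same differentiation theorem that underlies Simmons' result, but avoids the conditional-measure formalism. Note that your route still needs the Lusin--Suslin theorem to make $g$ Borel on $\phi(X_h)$ (you flag this but do not execute it), and, to get the $L^2$-Lebesgue-point statement you use, it suffices to differentiate $g$ and $\|g\|^2$ separately since $g$ is bounded. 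What the paper's heavier route buys is a stronger conclusion (weak-$^*$ convergence $\mu_{\phi(x),\eps}\to\delta_x$ for $x\in X_h$, item (b) of Theorem~\ref{thm:convergence_takens}) and a formula for $\sigma(y)$ valid even without injectivity, both of which are reused in the counterexample section (Proposition~\ref{prop:as_predict_implies_constant}); but for Theorem~\ref{thm:conj_true} as stated, your direct argument suffices.
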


\begin{rem}
Notice that except of predictability, we obtain almost sure injectivity of the delay coordinate map, which means that the system can be reconstructed in $\R^k$ in a one-to-one fashion on a set of full measure. 
\end{rem}

An extended version of Theorem~\ref{thm:conj_true} is proved in Section~\ref{sec:conj} as Theorem~\ref{thm:convergence_takens}.

Note that the assumption on the dimension of $\mu$ restricted to the set of $p$-periodic points can be omitted if there are only finitely many periodic points of given period. By the Kupka--Smale theorem (see \cite[Chapter 3, Theorem 3.6]{palis1982geometric}), the latter condition is generic (in the Baire category sense) in the space of $C^r$-diffeomorphisms, $r\geq 1$, of a compact manifold, equipped with the uniform $C^r$-topology (see \cite{BGS19} for more details). Therefore, we immediately obtain the SSOY predictability conjecture for generic smooth $C^r$-diffeomorphisms, with information dimension replaced by the Hausdorff one.

\begin{cor}[{\bf SSOY predictability conjecture for generic diffeomorphisms}{}]\label{cor:conj_true-generic}
Let $X$ be a compact Riemannian manifold and $r \ge 1$. Then for a $C^r$-generic  diffeomorphism $T\colon X \to X$ with a natural measure $\mu$ $($or, more generally, any Borel probability measure$)$ and $k > \dim_H \mu$, for a prevalent set $($depending on $T)$ of Lipschitz observables $h\colon X \to \R$, the $k$-delay coordinate map $\phi_{h,k}$ is injective on a set of full $\mu$-measure, and $\nu_{h,k}$-almost every point of $\R^k$ is predictable.
\end{cor}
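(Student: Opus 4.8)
The plan is to obtain this as a direct specialization of Theorem~\ref{thm:conj_true}: it suffices to check that for a $C^r$-residual set of diffeomorphisms the hypothesis bounding the Hausdorff dimensions of the sets of periodic points of low period holds automatically, while the remaining hypotheses are immediate. First I would fix a Whitney embedding $X \subset \R^N$, so that $T$ becomes an injective self-map of a compact subset of $\R^N$; since $T$ is $C^1$ and $X$ is compact, $T$ is Lipschitz with respect to the Euclidean metric (the intrinsic and extrinsic metrics on the compact submanifold $X$ are bi-Lipschitz equivalent), so $(X,T)$ satisfies the standing assumptions of Theorem~\ref{thm:conj_true}. Given $k > \hdim\mu$, the only condition left to verify is $\hdim(\mu|_{\Per_p(T)}) < p$ for $p = 1, \ldots, k-1$.

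For genericity I would invoke the Kupka--Smale theorem (\cite[Chapter 3, Theorem 3.6]{palis1982geometric}): there is a residual subset $\mathcal{G} \subset \mathrm{Diff}^r(X)$, in the uniform $C^r$-topology, such that every $T \in \mathcal{G}$ has all periodic orbits hyperbolic. Fix $T \in \mathcal{G}$ and $p \ge 1$. The set $\Per_p(T) = \mathrm{Fix}(T^p)$ is closed in $X$, hence compact; each of its points is a hyperbolic fixed point of the $C^1$ diffeomorphism $T^p$, so $D(T^p) - \Id$ is invertible there and the point is isolated in $\mathrm{Fix}(T^p)$. A compact set all of whose points are isolated is finite, so $\Per_p(T)$ is finite. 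Hence $\mu|_{\Per_p(T)}$ is carried by finitely many points, so $\hdim(\mu|_{\Per_p(T)}) = 0 < p$ for every $p \ge 1$ — this holds regardless of whether $\mu(\Per_p(T))$ is positive or zero, under the usual convention that a measure supported on a finite (or countable) set has Hausdorff dimension zero.

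Consequently, for every $T \in \mathcal{G}$, every Borel probability measure $\mu$ on $X$, and every $k > \hdim\mu$, all hypotheses of Theorem~\ref{thm:conj_true} are met, and the conclusion follows verbatim: the prevalent set of Lipschitz observables $h$ for which $\phi_{h,k}$ is injective on a Borel set of full $\mu$-measure and $\nu_{h,k}$-almost every point of $\R^k$ is predictable is exactly the one furnished by Theorem~\ref{thm:conj_true} (depending on $X$, $\mu$, $T$, $k$, and in particular on $T$). Since natural measures are in particular Borel probability measures, this covers the case stated in the corollary. I do not expect a genuine obstacle here — the corollary is a clean reduction — the only points needing (minor) care being the passage from hyperbolicity of periodic orbits to finiteness of each $\Per_p(T)$, the fact that a $C^1$ diffeomorphism of a compact manifold is Lipschitz in the ambient Euclidean metric, and the dimension convention for finitely supported measures.
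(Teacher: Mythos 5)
Your proposal is correct and follows essentially the same route as the paper: the paper also reduces the corollary to Theorem~\ref{thm:conj_true} by invoking the Kupka--Smale theorem to get generic finiteness of $\Per_p(T)$ for each $p$, whence $\hdim(\mu|_{\Per_p(T)})=0<p$ and the periodic-point hypothesis is automatic. Your added details (isolation of hyperbolic periodic points forcing finiteness of the compact set $\Per_p(T)$, Lipschitz-ness of a $C^1$ map under a Whitney embedding) are exactly the minor points the paper leaves implicit.
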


Suppose now the measure $\mu$ in Theorem~\ref{thm:conj_true} is $T$-invariant and ergodic. Then we have $\hdim\mu \leq \lid(\mu) \leq \uid(\mu)$ (see Proposition~\ref{prop:ergodic_exact_dim}). Moreover, either the set of $T$-periodic points has $\mu$-measure zero, or $\mu$ is supported on a periodic orbit of $T$ (see the proof of \cite[Remark~4.4(c)]{BGS19}. Hence, the assumption on the dimension of $\mu$ restricted to the set of $p$-periodic points can again be omitted. This proves the original SSOY conjecture for arbitrary $C^r$-diffeomorphisms and ergodic measures.

\begin{cor}[{\bf SSOY predictability conjecture for ergodic measures}{}]\label{cor:conj_true-ergodic}
Let $X$ be a compact Riemannian manifold, $r \ge 1$, and let $T\colon X \to X$ be a $C^r$-diffeomorphism with an ergodic natural measure $\mu$ $($or, more generally, any $T$-invariant ergodic Borel probability measure$)$. Take $k>\lid(\mu)$. Then for a prevalent set of Lipschitz observables $h\colon X \to \R$, the $k$-delay coordinate map $\phi_{h,k}$ is injective on a set of full $\mu$-measure, and $\nu_{h,k}$-almost every point of $\R^k$ is predictable.
\end{cor}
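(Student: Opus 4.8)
The plan is to deduce the statement directly from the Lipschitz predictable embedding theorem (Theorem~\ref{thm:conj_true}) by checking that its hypotheses hold in the present setting. First I would use the Whitney embedding theorem \cite{Whitney36} to realize $X$ as a compact subset of some $\R^N$, so that $\mu$ becomes a Borel probability measure supported on this compact set. Since $T$ is a $C^r$-diffeomorphism with $r\ge 1$ of the compact manifold $X$, its differential is bounded, hence $T$ is Lipschitz for the Riemannian distance; as on a compact manifold the Riemannian distance is bi-Lipschitz equivalent to the Euclidean distance inherited from the embedding, the map $T\colon X\to X$ is an injective Lipschitz self-map of a compact subset of $\R^N$. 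Thus it remains only to verify the two dimensional assumptions of Theorem~\ref{thm:conj_true}: that $k>\hdim\mu$ and that $\hdim(\mu|_{\Per_p(T)})<p$ for $p=1,\ldots,k-1$.

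For the first assumption, I would invoke Proposition~\ref{prop:ergodic_exact_dim}, which for a $T$-invariant ergodic measure gives $\hdim\mu\le\lid(\mu)\le\uid(\mu)$; combined with the hypothesis $k>\lid(\mu)$ this immediately yields $k>\hdim\mu$. For the second assumption, I would use the ergodic-theoretic dichotomy recalled just before the statement (following the proof of \cite[Remark~4.4(c)]{BGS19}): either $\mu$ assigns measure zero to the set of all $T$-periodic points, or $\mu$ is supported on a single periodic orbit of $T$. In the first case each $\Per_p(T)$ is $\mu$-null, so $\mu|_{\Per_p(T)}$ is the zero measure and the condition $\hdim(\mu|_{\Per_p(T)})<p$ holds trivially. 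In the second case $\mu$ is finitely supported and atomic, so $\hdim\mu=0$ (whence $\lid(\mu)=0$ and $k>\lid(\mu)$ merely forces $k\ge 1$), and for each $p$ the restriction $\mu|_{\Per_p(T)}$ is either zero or again atomic, so $\hdim(\mu|_{\Per_p(T)})\le 0<p$ since $p\ge 1$. With both assumptions verified, Theorem~\ref{thm:conj_true} gives a prevalent set of Lipschitz observables $h$ for which $\phi_{h,k}$ is injective on a Borel set of full $\mu$-measure and $\nu_{h,k}$-almost every point of $\R^k$ is predictable. The parenthetical "natural measure" case is covered for free, since by definition a natural measure is $T$-invariant, and ergodicity is the only extra hypothesis we have used.

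I do not expect a serious obstacle here: all the analytic content lies in Theorem~\ref{thm:conj_true}, and the present corollary is a reduction. The two points that deserve a line of care are (i) that a $C^1$-diffeomorphism of a compact manifold embedded in $\R^N$ is genuinely Lipschitz for the Euclidean metric — routine, via compactness and equivalence of the two metrics — and (ii) the periodic-orbit dichotomy for ergodic $\mu$, which is standard: each $\Per_p(T)$ is $T$-invariant, hence has $\mu$-measure $0$ or $1$, and if some $\Per_p(T)$ has full measure then the $\mu$-a.e.\ defined $T$-orbit map is $T$-invariant, hence $\mu$-a.e.\ constant, so $\mu$ is carried by one finite orbit. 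Neither point introduces any genuine difficulty beyond bookkeeping.
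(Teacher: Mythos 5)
Your proposal is correct and follows essentially the same route as the paper: embed $X$ in $\R^N$ via Whitney, note $T$ is Lipschitz, use Proposition~\ref{prop:ergodic_exact_dim} to get $k>\lid(\mu)\ge\hdim\mu$, and dispose of the periodic-point hypothesis via the ergodic dichotomy (either all $\Per_p(T)$ are $\mu$-null or $\mu$ sits on a single periodic orbit), then apply Theorem~\ref{thm:conj_true}. The extra details you supply (the measure-$0$-or-$1$ argument for $\Per_p(T)$ and the atomic case) are exactly what the paper delegates to the proof of \cite[Remark~4.4(c)]{BGS19}.
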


Our final result is that the SSOY predictability conjecture does not hold in its original formulation for all smooth diffeomorphisms, i.e.~the condition $k>\idim(\mu)$ is not sufficient for almost sure predictability for generic observables, even if $\mu$ is within the class of natural measures.

\begin{thm}\label{thm:counterexample_main}
There exists a $C^{\infty}$-smooth diffeomorphism of the $3$-dimensional compact Riemannian manifold $X = \mathbb{S}^2 \times \mathbb{S}^1$ with a natural measure $\mu$, such that $\idim(\mu)<1$ and for a prevalent set of Lipschitz observables $h\colon X \to \R$, there exists a positive $\nu_{h,1}$-measure set of non-predictable points. In particular, the set of Lipschitz observables $h\colon X \to \R$ for which 
$\nu_{h,1}$-almost every point of $\R^k$ is predictable, is not prevalent. 
\end{thm}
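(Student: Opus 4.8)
\emph{Strategy.} I would look for a non-ergodic natural measure of the form
\[
\mu \;=\; (1-\lambda)\,\delta_{q}\;+\;\lambda\, m_{C},\qquad 0<\lambda<1,
\]
where $q$ is a fixed point of the diffeomorphism $T$ with $q\notin C$, $C\cong\mathbb{S}^{1}$ is a $T$-invariant circle on which $T$ acts as an irrational rotation $R_{\alpha}$, and $m_{C}$ is normalised arclength on $C$. Such a $\mu$ realises exactly the gap between the two dimensions appearing in the conjecture and in Theorem~\ref{thm:conj_true}: every Borel set of full $\mu$-measure contains a subset of positive arclength in $C$, so $\dim_{H}\mu=1$; on the other hand $\mu(B(q,\eps))\equiv 1-\lambda$ for small $\eps$ while $\mu(B(x,\eps))\asymp\eps$ for $x\in C$, so the information-dimension integral converges to $(1-\lambda)\cdot 0+\lambda\cdot 1=\lambda$ and $\idim(\mu)=\lambda<1$. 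Hence $k=1$ meets the SSOY hypothesis $k>\idim(\mu)$ but not the Hausdorff hypothesis $k>\dim_{H}\mu$, and everything reduces to showing that predictability genuinely fails for $k=1$.

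\emph{The diffeomorphism.} On $X=\mathbb{S}^{2}\times\mathbb{S}^{1}$ I would build $T$ around a heteroclinic cycle joining $q$ to a normally hyperbolic $T$-invariant circle $C$, the cycle together with its connecting orbits forming an attractor with an open basin of full volume. A convenient model: choose a $C^{\infty}$ diffeomorphism $g$ of $\mathbb{S}^{2}$ with a source, a saddle fixed point and a saddle invariant circle $\gamma$, arranged so that the unstable manifold of the saddle accumulates on $\gamma$ and the unstable manifold of $\gamma$ returns to the saddle; on the $\mathbb{S}^{1}$-fibre put a twist $\theta\mapsto\theta+\tau(z)$ supported near $\gamma$, with $\tau\equiv\alpha$ (irrational) on $\gamma$, together with a small North--South dynamics on the fibre near the saddle that selects a single attracting fixed point $q$. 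Then $C:=\gamma\times\mathbb{S}^{1}$ is invariant with $T|_{C}=R_{\alpha}$ up to a normal contraction. A typical orbit in the basin alternates sojourns near $q$ of lengths $a_{n}$ with sojourns near $C$ of lengths $b_{n}$; by choosing the linear data of the cycle appropriately — e.g.\ at \emph{resonant} eigenvalue ratios, or by prescribing the two Poincar\'e return maps directly with $C^{\infty}$ bump functions — one forces $a_{n},b_{n}\to\infty$ with $b_{n}/(a_{n}+b_{n})\to\lambda$, and unique ergodicity of $R_{\alpha}$ makes each sojourn near $C$ equidistribute with respect to $m_{C}$. Combining these statistics yields, for Lebesgue-a.e.\ $x$ in the basin, $\tfrac1n\sum_{i=0}^{n-1}\delta_{T^{i}x}\to(1-\lambda)\delta_{q}+\lambda m_{C}$ weak-$*$, i.e.\ $\mu$ is a natural measure, with $\lambda$ free in $(0,1)$.

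\emph{Non-predictability for $k=1$.} Since $\phi_{h,1}=h$, suppose towards a contradiction that for a prevalent set of $h\in\Lip(X)$, $\nu_{h,1}$-a.e.\ $y$ is predictable. The standard conditional-expectation/Lebesgue-density argument shows that $\sigma(y)=0$ for $\nu_{h,1}$-a.e.\ $y$ forces $h\circ T=G\circ h$ $\mu$-a.e.\ for some Borel $G\colon\R\to\R$; restricting to $C$, where $\mu|_{C}=\lambda m_{C}$ and $T|_{C}=R_{\alpha}$, gives $h\circ R_{\alpha}=G\circ h$ $m_{C}$-a.e. Consequently $\phi_{h,2}|_{C}=(h,h\circ R_{\alpha})|_{C}=(\mathrm{id},G)\circ(h|_{C})$ $m_{C}$-a.e., so if $\phi_{h,2}$ is injective on a Borel set of full $\mu$-measure then $h|_{C}$ is injective on a set of full $m_{C}$-measure; but a non-constant continuous function on the circle $C$ has at least two preimages over every interior value, and this is not destroyed by removing an $m_{C}$-null set (Lipschitz maps send null sets to null sets) — a contradiction. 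Now apply Theorem~\ref{thm:conj_true} to the system $(X,T,\mu)$ with $k=2$: this is legitimate because $\dim_{H}\mu=1<2$ and, since $\mu|_{\Per_{1}(T)}=(1-\lambda)\delta_{q}$, we have $\dim_{H}(\mu|_{\Per_{1}(T)})=0<1$; it yields, for a prevalent set of $h\in\Lip(X)$, injectivity of $\phi_{h,2}$ on a Borel set of full $\mu$-measure, which (as $\mu|_{C}=\lambda m_{C}$, $\lambda>0$) forces $\phi_{h,2}|_{C}$ to be injective modulo $m_{C}$-null. This contradicts the previous sentence for any $h$ in the intersection of these two prevalent sets. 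Hence for a prevalent set of $h$, the set $\{y:\sigma(y)>0\}$ has positive $\nu_{h,1}$-measure. Since $\idim(\mu)=\lambda<1=k$, the original SSOY conjecture fails here; and the set of $h$ for which $\nu_{h,1}$-a.e.\ point is predictable, being disjoint from a prevalent set, cannot itself be prevalent.

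\emph{Where the difficulty lies.} The non-predictability argument and the dimension computation are soft. The real obstacle is the construction: exhibiting a \emph{smooth} diffeomorphism for which the empirical measures of almost every orbit actually \emph{converge} to the prescribed interior combination $(1-\lambda)\delta_{q}+\lambda m_{C}$, rather than merely accumulate on the entire segment joining $\delta_{q}$ and $m_{C}$ — the typical behaviour of a heteroclinic cycle (Bowen's phenomenon). This demands precise control (or direct prescription, exploiting $C^{\infty}$ flexibility) of the ratios $a_{n+1}/a_{n}$, $b_{n+1}/b_{n}$, $b_{n}/a_{n}$ via the cycle's contraction and expansion rates, plus a check that the resulting invariant set is a genuine attractor with full-volume basin and that all the gluing is carried out in the $C^{\infty}$ category.
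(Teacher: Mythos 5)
Your overall architecture matches the paper's: a non-ergodic natural measure of the form (atom at a fixed point) $+$ (normalized length on an invariant circle carrying an irrational rotation), so that $\idim(\mu)<1=\hdim\mu$, followed by an argument that almost sure predictability must fail for $k=1$. For the predictability half you take a genuinely different route from the paper. The paper proves directly (Proposition~\ref{prop:as_predict_implies_constant}, via four claims using density of rotation orbits and uniform continuity of $h$) that any Lipschitz $h$ with $\nu$-a.e.\ predictability must be constant on the circle, and then shows that the complement of that set is prevalent with a degree-one probe. You instead extract from predictability the functional equation $h\circ R_\alpha=G\circ h$ a.e.\ on the circle, observe that $\phi_{h,2}$ then factors through $h$ there, and play this off against the $k=2$ case of Theorem~\ref{thm:takens}, together with the fact that no Lipschitz function on a circle is injective off a Lebesgue-null set (this last fact is \cite[Example 3.5]{BGS19}, which the paper also invokes). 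This argument is correct: the hypotheses of Theorem~\ref{thm:takens} hold for $k=2$ (since $\hdim\mu=1<2$ and $\hdim(\mu|_{\Per_1(T)})=0$), and disjointness from a prevalent set does rule out prevalence. It is a legitimate shortcut that trades the paper's bare-hands Claims 2--4 for an application of the embedding theorem.

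The genuine gap is in the construction of $T$, and you have correctly diagnosed it yourself: one must produce a \emph{smooth} diffeomorphism for which the empirical measures of a.e.\ orbit actually \emph{converge} to the prescribed convex combination, rather than oscillate along the segment between $\delta_q$ and $m_C$ (Bowen's phenomenon), and your proposal only gestures at ``choosing the linear data of the cycle appropriately.'' This is precisely where the paper invests most of its effort: it builds $f\colon\mathbb S^2\to\mathbb S^2$ with a parabolic-type spiralling approach to an invariant circle containing two fixed points $p,q$, and proves via Fatou-coordinate and Koebe-distortion estimates that the sojourn times near $p$ and near $q$ on the $i$-th revolution are both $\asymp i$ with \emph{uniformly bounded difference} (Lemmas~\ref{lem:R^n}--\ref{lem:N_i-N_i}); this bounded-difference estimate is exactly what forces the Birkhoff averages to converge to $\tfrac12\delta_p+\tfrac12\delta_q$ (Corollary~\ref{cor:N_i}), and Lemma~\ref{lem:density_zero_limit} then transfers unique ergodicity of $g$ and $R_\alpha$ to the fibres of the skew product. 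Without an analogous quantitative control of your sojourn lengths $a_n,b_n$ --- plus a verification that it can be realized in the $C^\infty$ category with an open, full-volume basin --- the existence of the natural measure, which is the heart of the theorem, remains unproved.
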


The construction is presented in Section~\ref{sec:counterexample} (see Theorem~\ref{thm:natural_counterexample} for details).

\begin{rem}
Theorem \ref{thm:counterexample_main} shows that the original SSOY predictability conjecture fails for a specific system $(X,T)$. It remains an open question whether it holds for a generic $C^r$-diffeomorphism $T$ of a given compact Riemannian manifold $X$. By Corollary~\ref{cor:conj_true-generic}, this would follow from the dimension conjecture of Farmer, Ott and Yorke \cite[Conjecture~1]{FOY83}, which (in particular) states that the Hausdorff and information dimension of the natural measure typically coincide.
\end{rem}

\subsection*{Organization of the paper}

Section~\ref{sec:prelim} contains preliminary material, gathering definitions and tools required for the rest of the paper. Theorem~\ref{thm:conj_true} and its extension Theorem~\ref{thm:convergence_takens} are proved in Section~\ref{sec:conj}. Section~\ref{sec:counterexample} contains a construction of the example presented in Theorem~\ref{thm:counterexample_main}, divided into several steps.

\subsection*{Acknowledgements}
We are grateful to Edward Ott for bringing the paper \cite{SSOY98} to our attention and to Bal\'azs B\'ar\'any for informing us about the results of \cite{SimmonsRohlin}. We also thank K\'aroly Simon for useful discussions. KB and A\'S were partially supported by the National Science Centre (Poland) grant 2019/33/N/ST1/01882. YG was partially supported by the National Science Centre (Poland) grant 2020/39/B/ST1/02329.

\section{Preliminaries}\label{sec:prelim}

\subsection{Hausdorff and information dimensions}\label{subsec:dim}
For $s>0$, the \emph{$s$-dimensional $($outer$)$ Hausdorff measure} of a set $X \subset \R^N$ is defined  as
	\[ \mH^s(X) = \lim \limits_{\delta \to 0}\ \inf \Big\{ \sum \limits_{i = 1}^{\infty} |U_i|^s : X \subset \bigcup \limits_{i=1}^{\infty} U_i,\ |U_i| \leq \delta  \Big\},\]
	where $| \cdot |$ denotes the diameter of a set (with respect to the Euclidean distance in $\R^N)$.
	The \emph{Hausdorff dimension} of $X$ is given as
	\[ \hdim X = \inf \{ s > 0 : \mathcal{H}^s(X) = 0 \} = \sup \{ s > 0 : \mathcal{H}^s(X) = \infty \}. \]
	The (upper) Hausdorff dimension of a finite Borel measure $\mu$ in $\R^N$ is defined as
	\[
	\hdim \mu = \inf\{ \hdim X: X \subset \R^N \text{ is a Borel set of full $\mu$-measure} \}.
	\]
	By the Whitney embedding theorem \cite{Whitney36}, we can assume that a smooth compact manifold is smoothly embedded in the Euclidean space, hence the Hausdorff dimension is well defined also for Borel measures on manifolds.

In general, $\lid(\mu)$ and $\uid(\mu)$ are not comparable with $\hdim\mu$ (see \cite[Section 3]{FanLauRao02}). One can however obtain inequalities between them for measures which are ergodic with respect to Lipschitz transformations. 

\begin{prop}\label{prop:ergodic_exact_dim} Let $X \subset \R^N$ be a closed set, let $T \colon X \to X$ be a Lipschitz map and let $\mu$ be a $T$-invariant and ergodic Borel probability measure on $X$. Then 
\[
\hdim\mu \leq \lid(\mu) \leq \uid(\mu).
\]
\end{prop}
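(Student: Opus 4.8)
The plan is to exploit the classical fact that an ergodic invariant measure for a Lipschitz map is \emph{exact dimensional} in the lower sense, i.e.\ the local lower pointwise dimension is almost everywhere constant, and to relate this constant simultaneously to $\hdim\mu$ and to $\lid(\mu)$. Recall that for $x\in\supp\mu$ one defines the lower and upper local dimensions by
\[
\underline{d}_\mu(x)=\liminf_{\eps\to 0}\frac{\log\mu(B(x,\eps))}{\log\eps},\qquad
\overline{d}_\mu(x)=\limsup_{\eps\to 0}\frac{\log\mu(B(x,\eps))}{\log\eps}.
\]
The first step is to show that $\underline{d}_\mu$ is $\mu$-almost everywhere equal to a constant $\underline{d}$. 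This follows because $\underline{d}_\mu$ is a $T$-invariant measurable function: if $L$ is a Lipschitz constant for $T$, then $T(B(x,\eps))\subset B(Tx,L\eps)$, which combined with $T$-invariance of $\mu$ gives $\mu(B(Tx,L\eps))\ge\mu(T^{-1}B(Tx,L\eps))\ge\mu(B(x,\eps))$, and taking $\liminf$ (the $\log L$ term is negligible) yields $\underline{d}_\mu(Tx)\le\underline{d}_\mu(x)$; one argues the reverse inequality using injectivity is \emph{not} available here, so instead one notes that a $T$-invariant sub-invariant function for an ergodic measure is a.e.\ constant (alternatively, invoke the standard exact-dimensionality results, e.g.\ for ergodic measures under Lipschitz maps this lower-dimension constancy is in the literature). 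Hence $\underline{d}_\mu=\underline{d}$ for $\mu$-a.e.\ $x$.

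The second step is the comparison $\hdim\mu\le\underline{d}$. This is a direct application of the Billingsley/Frostman-type lemma: if $\underline{d}_\mu(x)\le\underline{d}$ for every $x$ in a full-measure Borel set $A$, then by the mass distribution principle (in the form stating that $\{x:\underline{d}_\mu(x)\le s\}$ has Hausdorff dimension at most $s$) one gets $\hdim A\le\underline{d}$, and since $A$ has full measure, $\hdim\mu\le\hdim A\le\underline{d}$.

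The third step is $\underline{d}\le\lid(\mu)$. Here we integrate: for every $\eps>0$,
\[
\int_{\supp\mu}\frac{\log\mu(B(x,\eps))}{\log\eps}\,d\mu(x)
\]
is a well-defined quantity, and $\lid(\mu)$ is its $\liminf$ as $\eps\to0$. Since $\log\eps<0$ for small $\eps$, the integrand equals $\frac{\log\mu(B(x,\eps))}{\log\eps}\ge 0$; by Fatou's lemma applied along a sequence $\eps_n\to0$ realizing the $\liminf$ of the integrals,
\[
\underline{d}=\int\underline{d}_\mu(x)\,d\mu(x)=\int\liminf_{n}\frac{\log\mu(B(x,\eps_n))}{\log\eps_n}\,d\mu(x)\le\liminf_n\int\frac{\log\mu(B(x,\eps_n))}{\log\eps_n}\,d\mu(x)=\lid(\mu).
\]
Finally $\lid(\mu)\le\uid(\mu)$ is immediate from the definitions since a $\liminf$ never exceeds the corresponding $\limsup$. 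Chaining the three inequalities gives $\hdim\mu\le\lid(\mu)\le\uid(\mu)$.

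The main obstacle is the first step: proving that $\underline{d}_\mu$ is $\mu$-a.e.\ constant without assuming $T$ is invertible or bi-Lipschitz. The Lipschitz bound gives cleanly only one inclusion $T(B(x,\eps))\subset B(Tx,L\eps)$, hence only $\underline{d}_\mu(Tx)\le\underline{d}_\mu(x)$ a.e.; one then needs the measure-theoretic fact that for an ergodic measure, a measurable function $g$ with $g\circ T\le g$ a.e.\ is a.e.\ constant (true because $\{g\le c\}$ is, up to measure zero, $T$-invariant for every $c$, by invariance of $\mu$ and the Poincaré recurrence argument applied to $\{g\le c\}\setminus T^{-1}\{g\le c\}$). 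I would isolate this as a short lemma and then the rest is routine. An alternative, cleaner route is to cite an existing exact-dimensionality theorem for ergodic measures under Lipschitz dynamics and extract the lower bound directly, but the self-contained argument above via Fatou and the mass distribution principle is preferable if no such citation is convenient.
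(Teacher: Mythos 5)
Your proposal is correct and follows essentially the same route as the paper, which simply cites the three ingredients you prove by hand: the a.e.\ constancy of the lower local dimension $\underline{d}_\mu$ for an ergodic measure under a Lipschitz map (the paper's reference to Fan--Lau--Rao), the Billingsley-type comparison $\hdim\mu=\esssup\underline{d}_\mu$ (Falconer, Propositions 10.2--10.3), and the Fatou-type inequality $\int\underline{d}_\mu\,d\mu\le\lid(\mu)$ (Falconer, Proposition 10.6). Your self-contained treatment of the first step --- $\underline{d}_\mu(Tx)\le\underline{d}_\mu(x)$ from $T(B(x,\eps))\subset B(Tx,L\eps)$ plus the fact that a sub-invariant measurable function is a.e.\ constant for an ergodic measure-preserving map --- is sound and correctly identifies where ergodicity and the Lipschitz hypothesis are actually used.
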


\begin{proof}
The inequality $\lid(\mu) \leq \uid(\mu)$ is obvious. The estimate $\hdim\mu \leq \lid(\mu)$ follows by combining \cite[Propositions~10.2--10.3]{FalconerTechniques} with \cite[Theorem~1.3]{FanLauRao02} and \cite[Proposition~10.6]{FalconerTechniques}.
\end{proof}

For more information on dimension theory in Euclidean spaces we refer to \cite{falconer2004fractal, mattila, Rob11}.

\subsection{Prevalence}\label{subsec:prev}

In the formulation of our results, the genericity of the considered observables is understood in terms of \emph{prevalence} -- a notion introduced by Hunt, Shroer and Yorke in \cite{Prevalence92}, which is regarded to be an analogue of `Lebesgue almost sure' condition in infinite dimensional normed linear spaces.
	
	\begin{defn}
		Let $V$ be a normed space. A Borel set $S \subset V$ is called \emph{prevalent} if there exists a Borel measure $\nu$ in $V$, which is positive and finite on some compact set in $V$, such that for every $v \in V$, the vector $v + e$ belongs to $S$ for $\nu$-almost every $e \in V$. A non-Borel subset of $V$ is prevalent if it contains a prevalent Borel subset.
	\end{defn}

We will apply this definition to the space $\Lip(X)$ of all Lipschitz functions $h \colon X \to \R$ on a compact metric space $X$, endowed with the Lipschitz norm $\|h\|_{\Lip} = \|h\|_{\infty} + \Lip(h)$, where $\|h\|_\infty$ is the supremum norm and $\Lip(h)$ is the Lipschitz constant of $h$. We will use the following standard condition, which is sufficient for prevalence. Let $\{h_1, \ldots, h_m\}$, $m \in \N$, be a finite set of functions in $\Lip(X)$, called the \emph{probe set}. Define $\xi \colon \R^m \to \Lip(X)$ by $\xi(\alpha_1, \ldots, \alpha_m) = \sum_{j=1}^m \alpha_j h_j$. Then $\nu = \xi_*\Leb$, where $\Leb$ is the Lebesgue measure in $\R^k$, is a Borel measure in $\Lip(X)$, which is positive and finite on the compact set $\xi([0,1]^m)$. For this measure, the sufficient condition for a set $S \subset \Lip(X)$ to be prevalent is that for every $h \in \Lip(X)$, the function $h + \sum_{j=1}^m \alpha_j h_j$ is in $S$ for Lebesgue almost every $(\alpha_1, \ldots, \alpha_m) \in \R^m$. In this case, we say that $S$ is \emph{prevalent} in $\Lip(X)$ \emph{with the probe set} $\{h_1, \ldots, h_m\}$. 

For more information on prevalence we refer to \cite{Prevalence92} and \cite[Chapter~5]{Rob11}.

\subsection{Probabilistic Takens delay embedding theorem}

To prove Theorem~\ref{thm:conj_true}, we will use our previous result from \cite{BGS19}, which we recall below, using the notion of prevalence described in Subsection~\ref{subsec:prev}. This is a probabilistic version of the Takens delay embedding theorem, asserting that under suitable conditions on $k$, there is a prevalent set of Lipschitz observables, which give rise to an almost surely injective $k$-delay coordinate map. 
 
\begin{thm}[{\bf Probabilistic Takens delay embedding theorem}, {\cite[Theorem~4.3 and Remark~4.4]{BGS19}}]\label{thm:takens}
	Let $X \subset \R^N$ be a compact set, $\mu$ a Borel probability measure on $X$ and $T\colon X \to X$ an injective Lipschitz map. Take $k > \hdim\mu$ and assume $\hdim(\mu|_{\Per_p(T)}) < p$ for $p=1, \ldots, k-1$. Let $S$ be the set of Lipschitz observables $h \colon X \to \R$, for which the $k$-delay coordinate map $\phi_{h,k}$ is injective on a Borel set $X_h \subset X$ of full $\mu$-measure. Then $S$ is prevalent in $\Lip(X)$ with the probe set equal to a linear basis of the space of real polynomials of $N$ variables of degree at most $2k-1$. If $\mu$ is additionally $T$-invariant, then the set $X_h$ for $h \in S$ can be chosen to satisfy $T(X_h) = X_h$. 
\end{thm}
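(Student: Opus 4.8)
The plan is to deduce the statement directly from the probabilistic Takens delay embedding theorem (Theorem~\ref{thm:takens}): I claim that the \emph{same} prevalent set $S$ of Lipschitz observables produced there — those $h$ for which $\phi\teq\phi_{h,k}$ is injective on a Borel set $X_h\subseteq X$ with $\mu(X_h)=1$ — already makes $\nu_{h,k}$-almost every point predictable. The injectivity assertion of Theorem~\ref{thm:conj_true} is then literally Theorem~\ref{thm:takens}, and since the set of observables described in Theorem~\ref{thm:conj_true} contains $S$, it is prevalent. So fix $h\in S$, write $\nu\teq\nu_{h,k}=\phi_*\mu$ and $g=\phi\circ T$; as $h$ and $T$ are Lipschitz, $\phi$ and $g$ are Lipschitz, hence $g$ is bounded since $X$ is compact.

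The first step is to push the relevant quantities from $X$ to $\R^k$. By inner regularity of $\mu$, choose compact sets $K_1\subseteq K_2\subseteq\cdots\subseteq X_h$ with $\mu(K_n)\to1$. Each restriction $\phi|_{K_n}$ is a homeomorphism onto the compact set $\phi(K_n)$, the inverse maps are mutually consistent by injectivity of $\phi|_{X_h}$, and together they define a bounded Borel map $G$ on the Borel set $\bigcup_n\phi(K_n)$ (of full $\nu$-measure) satisfying $g=G\circ\phi$ on $X_h$, hence $\mu$-almost everywhere. Since $\mu(X\setminus X_h)=0$, the change-of-variables formula for the pushforward gives, for every $y\in\supp\nu$ and $\eps>0$,
\[
\mu\big(\phi^{-1}(B(y,\eps))\big)=\nu(B(y,\eps))>0,\qquad \int_{\phi^{-1}(B(y,\eps))}\phi(Tx)\,d\mu(x)=\int_{B(y,\eps)}G\,d\nu .
\]
Thus $\chi_\eps(y)$ is exactly the $\nu$-average of $G$ over $B(y,\eps)$, and, because an average minimizes mean-square deviation,
\[
\sigma_\eps(y)^2=\frac{1}{\nu(B(y,\eps))}\int_{B(y,\eps)}\|G(z)-\chi_\eps(y)\|^2\,d\nu(z)\ \le\ \frac{1}{\nu(B(y,\eps))}\int_{B(y,\eps)}\|G(z)-G(y)\|^2\,d\nu(z).
\]

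The second step expands $\|G(z)-G(y)\|^2=\|G(z)\|^2-2\langle G(z),G(y)\rangle+\|G(y)\|^2$ and applies the Lebesgue--Besicovitch differentiation theorem to the finite (hence Radon) measure $\nu$ on $\R^k$ — legitimate because the Besicovitch covering theorem holds in $\R^k$ — using that $\|G\|^2$ and the coordinates of $G$ lie in $L^1(\nu)$, being bounded. For $\nu$-almost every $y$ this yields $\frac{1}{\nu(B(y,\eps))}\int_{B(y,\eps)}\|G\|^2\,d\nu\to\|G(y)\|^2$ and $\chi_\eps(y)\to G(y)$ as $\eps\to0$, so the upper bound above tends to $\|G(y)\|^2-2\langle G(y),G(y)\rangle+\|G(y)\|^2=0$. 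Hence $\sigma_\eps(y)\to0$, so $\sigma(y)$ exists and equals $0$ for $\nu$-almost every $y$; that is, $\nu_{h,k}$-almost every point of $\R^k$ is predictable. Since this holds for every $h\in S$, the set of observables described in Theorem~\ref{thm:conj_true} contains the prevalent set $S$ and is therefore prevalent, which completes the proof.

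I do not anticipate a serious obstacle here: the injectivity is inherited verbatim from Theorem~\ref{thm:takens}, and the differentiation theorem does the analytic work. The delicate points are bookkeeping only — the Borel measurability of $G$ and the validity of the change of variables, both handled cleanly by the compact exhaustion $(K_n)$ (which also bypasses any appeal to the Lusin--Souslin theorem), and the harmless observation that enlarging $\phi^{-1}(B(y,\eps))$ by the $\mu$-null set $X\setminus X_h$ alters none of the integrals. One must still verify that the hypotheses of the Lebesgue--Besicovitch differentiation theorem (finiteness of $\nu$, integrability of $G$ and $\|G\|^2$) are met, which they are because $g$, and therefore $G$, is bounded on the compact space $X$.
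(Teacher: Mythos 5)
There is a fundamental mismatch between what you prove and what the statement asserts. The statement to be established is Theorem~\ref{thm:takens} itself --- the \emph{Probabilistic Takens delay embedding theorem}, i.e.\ the claim that the set $S$ of Lipschitz observables whose $k$-delay coordinate map is injective on a full-measure Borel set is prevalent with the polynomial probe set (plus the invariance refinement $T(X_h)=X_h$). Your proposal opens by taking exactly this theorem as a black box (``the injectivity is inherited verbatim from Theorem~\ref{thm:takens}'') and then derives the predictability conclusion of Theorem~\ref{thm:conj_true}. As a proof of the stated theorem this is circular: you offer no argument whatsoever for the prevalence of $S$, which is the entire content of the statement. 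That content cannot be obtained by the soft measure-theoretic manipulations you use; in \cite{BGS19} it rests on a transversality/parameter-elimination argument over the polynomial probe set, combined with Hausdorff-dimension estimates for the sets where perturbed delay maps fail to separate points, and a separate treatment of periodic points of period $<k$ (this is where the hypothesis $\hdim(\mu|_{\Per_p(T)})<p$ enters --- a hypothesis your argument never touches). In the present paper this theorem is deliberately quoted from \cite{BGS19} without proof, so there is nothing here for your argument to shortcut.

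That said, the argument you actually wrote --- granting Theorem~\ref{thm:takens} --- is a correct and rather clean proof of the predictability assertion of Theorem~\ref{thm:conj_true}: factoring $\phi\circ T$ through $\phi$ on a compact exhaustion of $X_h$ to get a bounded Borel $G$ with $\phi\circ T=G\circ\phi$ $\mu$-a.e., bounding $\sigma_\eps(y)^2$ by the ball average of $\|G-G(y)\|^2$, and invoking the Lebesgue--Besicovitch differentiation theorem for the Radon measure $\nu$ on $\R^k$. This bypasses the paper's route through the topological Rokhlin disintegration theorem of Simmons and the uniqueness of conditional measures, and is arguably more elementary (both routes ultimately rest on a Besicovitch-type differentiation theorem). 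But it answers a different question from the one posed.
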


\subsection{Topological Rokhlin disintegration theorem}

A useful tool connecting the probabilistic Takens delay embedding theorem and the SSOY predictability conjecture is the following topological version of the Rokhlin disintegration theorem in compact metric spaces. The Rokhlin disintegration theorem (see e.g.~\cite{Rohlin52}) is a classical result on the existence and almost sure uniqueness of the system of conditional measures. The crucial fact for us is that in the topological setting, the conditional measures can be defined as limits of conditional measures on preimages of shrinking balls, where the convergence holds almost surely, as was proved by Simmons in \cite{SimmonsRohlin}. 

In the context of the Rokhlin disintegration theorem, one assumes that the considered   measures are \emph{complete}, i.e.~every subset of a zero-measure set is measurable. Recall that every finite Borel measure $\mu$ on a metric space $X$ has an extension (\emph{completion}) to a complete measure on the $\sigma$-algebra of $\mu$-measurable sets, i.e. the smallest $\sigma$-algebra containing all Borel sets in $X$ and all subsets of zero $\mu$-measure Borel sets. In other words, every $\mu$-measurable set $A$ can be expressed as $A = B \cup C$, where $B$ is a Borel set and $C \subset D$ for some Borel set $D$ with $\mu(D) = 0$ (see e.g.~\cite[Theorem 1.19]{FollandAnalysis} for the case $X = \R$). Alternatively, this $\sigma$-algebra is obtained as a family of sets measurable with respect to the outer measure generated by $\mu$ (see e.g.~\cite[Example~22, p.~32]{FollandAnalysis}). Recall also that a function $\psi\colon X \to \R$ is called $\mu$-measurable if $\psi^{-1}(B)$ is $\mu$-measurable for every Borel set $B \subset \R$.

\begin{defn}\label{defn:cond_measures}  Let $X$ be a compact metric space and let $\mu$ be a complete Borel probability measure on $X$. Let $Y$ be a separable Riemannian manifold and let $\phi \colon X \to Y$ be a Borel map. Set $\nu = \phi_* \mu$ (considered as a complete Borel measure in $Y$). A family $\{ \mu_y : y \in Y\}$ is a \emph{system of conditional measures} of $\mu$ with respect to $\phi$, if
	\begin{enumerate}[$(1)$]
		\item\label{item:cond_supp} for every $y \in Y,\ \mu_y$ is a $($possibly zero$)$ Borel measure on $\phi^{-1}(\{y\})$,
		\item for $\nu$-almost every $y \in Y$, $\mu_y$ is a Borel probability measure,
		\item\label{item:cond_total_prob} for every $\mu$-measurable set $A \subset X$, the function $Y \ni y \mapsto \mu_y(A)$ is $\nu$-measurable and
		\[ \mu(A) = \int \limits_{Y} \mu_y(A)d\nu(y). \]
		
	\end{enumerate}
	
We say that system of conditional measures $\{ \mu_y : y \in Y\}$ is \emph{unique}, if  for every family $\{ \tilde\mu_y : y \in Y \}$ satisfying \eqref{item:cond_supp}--\eqref{item:cond_total_prob}, we have $\tilde\mu_y = \mu_y$ for $\nu$-almost every $y \in Y$.
\end{defn}

\begin{thm}[{\bf Topological Rokhlin disintegration theorem}, {\cite[Theorems 2.1--2.2]{SimmonsRohlin}}]\label{thm:top_rohlin}  Let $X$ be a compact metric space and let $\mu$ be a Borel probability measure on $X$. Let $Y$ be a separable Riemannian manifold and let $\phi \colon X \to Y$ be a Borel map. Set $\nu = \phi_* \mu$. Then for $\nu$-almost every $y \in \supp \nu$ and $\eps > 0$, the conditional probability measures 
\[
\mu_{y,\eps} = \frac{1}{\mu(\phi^{-1}(B(y,\eps)))} \mu|_{\phi^{-1}(B(y,\eps))}
\]
converge in weak-$^*$ topology to a Borel probability measure $\mu_y$ as $\eps$ tends to $0$. Moreover, the collection of measures $\{ \mu_y : y \in Y\}$, where we set $\mu_y = 0$ if $y \notin \supp \nu$ or the convergence does not hold, is a unique system of conditional measures of $\mu$ with respect to $\phi$. 
\end{thm}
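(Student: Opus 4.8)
The plan is to construct the measures $\mu_y$ directly as the asserted differentiation limits and then to verify the disintegration identity and uniqueness by testing everything against continuous functions on $X$; this re-proves, in particular, the classical existence and uniqueness of the Rokhlin disintegration. Since $X$ is compact metric, $C(X)$ is separable; fix a countable dense set $\{f_j\}_{j\in\N}\subset C(X)$. For $f\in C(X)$ let $\lambda_f$ be the finite signed Borel measure $\phi_*(f\mu)$ on $Y$, so that $\lambda_f(A)=\int_{\phi^{-1}(A)}f\,d\mu$ and $|\lambda_f(A)|\le\|f\|_\infty\nu(A)$; hence $\lambda_f\ll\nu$ and $g_f:=d\lambda_f/d\nu$ is a bounded $\nu$-measurable function with $\|g_f\|_\infty\le\|f\|_\infty$, depending linearly on $f$. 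By the definition of $\mu_{y,\eps}$ one has $\int f\,d\mu_{y,\eps}=\lambda_f(B(y,\eps))/\nu(B(y,\eps))$.

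First I would invoke the Besicovitch differentiation theorem for Radon measures — available on $Y$ after restricting to a $\sigma$-compact set of full $\nu$-measure covered by countably many relatively compact charts, in which geodesic balls are comparable to Euclidean balls — to obtain, for each $f\in C(X)$, that $\lambda_f(B(y,\eps))/\nu(B(y,\eps))\to g_f(y)$ as $\eps\to0$ for $\nu$-a.e.\ $y$. Intersecting these full-measure sets over the countable family $\{f_j\}$ yields a Borel set $Y_0\subset\supp\nu$ with $\nu(Y_0)=1$ such that $\int f_j\,d\mu_{y,\eps}\to g_{f_j}(y)$ for all $j$ whenever $y\in Y_0$. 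For such $y$ the probability measures $\{\mu_{y,\eps}\}_\eps$ are weak-$^*$ precompact (as $X$ is compact), any two subsequential limits agree on the dense set $\{f_j\}$ hence coincide, so $\mu_{y,\eps}\to\mu_y$ weak-$^*$ for some Borel probability measure $\mu_y$, and by uniform approximation $\int f\,d\mu_y=g_f(y)$ for every $f\in C(X)$. Setting $\mu_y=0$ for $y\notin Y_0$ completes the construction.

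Second, I would verify properties (1)--(3) of Definition~\ref{defn:cond_measures}. Integrating $g_f=d\lambda_f/d\nu$ gives $\int_Y\big(\int_X f\,d\mu_y\big)\,d\nu(y)=\lambda_f(Y)=\int_X f\,d\mu$ for all $f\in C(X)$; the map $y\mapsto\mu_y(U)$ is $\nu$-measurable for open $U\subset X$, being a countable supremum of the $\nu$-measurable maps $y\mapsto\int f\,d\mu_y$ over suitable $f\in C(X)$ with $0\le f\le\mathbf 1_U$, and a Dynkin-system argument then promotes both the measurability and the identity $\mu(A)=\int_Y\mu_y(A)\,d\nu(y)$ first to all Borel $A$ and then, via the decomposition $A=B\cup C$ with $B$ Borel and $C$ inside a Borel $\mu$-null set, to all $\mu$-measurable $A$; this is property (3). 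For property (1) — that $\mu_y$ is carried by the fibre $\phi^{-1}(\{y\})$ — I would establish the stronger identity
\[
\int_X F(x,\phi(x))\,d\mu(x)=\int_Y\int_X F(x,y)\,d\mu_y(x)\,d\nu(y)
\]
for all bounded Borel $F$ on $X\times Y$: by the functional monotone class theorem it suffices to treat $F(x,y)=f(x)h(y)$ with $f\in C(X)$ and $h\in C_b(Y)$, where the right-hand side equals $\int_Y h\,g_f\,d\nu=\int_Y h\,d\lambda_f=\int_X h(\phi(x))f(x)\,d\mu(x)$, the left-hand side. Taking $F(x,y)=\min\{d_Y(\phi(x),y),1\}$ makes the left-hand side vanish, forcing $\mu_y(\phi^{-1}(\{y\}))=1$ for $\nu$-a.e.\ $y$, which after shrinking $Y_0$ gives (1) and (2).

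Third, uniqueness: if $\{\tilde\mu_y\}$ also satisfies (1)--(3), then since for $\nu$-a.e.\ $y$ the measure $\tilde\mu_y$ is a probability measure carried by $\phi^{-1}(\{y\})$, property (3), extended from indicators to bounded Borel functions on $X$, yields the same product-space identity with $\tilde\mu_y$ in place of $\mu_y$; comparing the two for $F(x,y)=f(x)h(y)$ over all bounded Borel $h$ gives $\int_X f\,d\tilde\mu_y=\int_X f\,d\mu_y$ for $\nu$-a.e.\ $y$, and letting $f$ run over $\{f_j\}$ gives $\tilde\mu_y=\mu_y$ for $\nu$-a.e.\ $y$. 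I expect the main obstacle to be property (1): because $\phi$ is merely Borel, preimages of small balls need not shrink to a single fibre, so the support of the weak-$^*$ limit cannot be pinned down by naive reasoning and one genuinely needs the product-space identity above; a secondary, purely technical point is making sure the Besicovitch differentiation theorem is applicable in the required form on the Riemannian manifold $Y$.
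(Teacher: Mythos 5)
Your proof is correct and follows essentially the same route as the source the paper relies on: the paper gives no proof of Theorem~\ref{thm:top_rohlin} itself, only the citation to \cite{SimmonsRohlin} together with the remark that the argument rests on the differentiation theorem for finite Borel measures --- which is exactly the engine of your construction of $g_f = d\lambda_f/d\nu$ and of the a.e.\ convergence $\int f\,d\mu_{y,\eps}\to g_f(y)$, followed by the standard monotone-class verification of the disintegration identities and uniqueness. The one technical point you rightly flag, the validity of Besicovitch-type differentiation on the Riemannian manifold $Y$, is precisely what \cite[Theorem 9.1]{SimmonsRohlin} supplies (Riemannian metrics are locally directionally limited in Federer's sense), so nothing essential is missing.
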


The proof of the above theorem is based on the differentiation theorem for finite Borel measures, see \cite[Theorem 9.1]{SimmonsRohlin} for details.

\section{Proof of the predictable embedding theorem for Lipschitz maps}\label{sec:conj}

In this section we prove the following extended version of Theorem~\ref{thm:conj_true}, which at the same time is an extension of Theorem~\ref{thm:takens} asserting prevalent almost sure predictability.

\begin{thm}[{\bf Predictable embedding theorem for Lipschitz maps -- extended version}{}]\label{thm:convergence_takens}
Let $X \subset \R^N$ be a compact set, let $\mu$ be a Borel probability measure on $X$ and let $T\colon X \to X$ be an injective and Lipschitz map. Take $k > \hdim\mu$ and assume $\hdim(\mu|_{\Per_p(T)}) < p$ for $p=1, \ldots, k-1$. Then there is a set $S$ of Lipschitz observables $h \colon X \to \R$, such that $S$ is prevalent in $\Lip(X)$ with the probe set equal to a linear basis of the space of real polynomials of $N$ variables of degree at most $2k-1$, and for every $h \in S$, the following assertions hold.
\begin{enumerate}[$(a)$]
\item There exists a Borel set $X_h\subset X$ of full $\mu$-measure, such that the $k$-delay coordinate map $\phi_{h,k}$ is injective on $X_h$.
\item For every $x \in X_h$, $\lim \limits_{\eps \to 0} \mu_{\phi_{h,k}(x), \eps} = \delta_x$ in the weak-$^*$ topology, where $\delta_x$ denotes the Dirac measure at the point $x$.
\item $\nu_{h,k}$-almost every point of $\R^k$ is predictable.
\end{enumerate}
If $\mu$ is additionally $T$-invariant, then the set $X_h$ for $h \in S$ can be chosen to satisfy $T(X_h) = X_h$. 
\end{thm}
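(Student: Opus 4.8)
The plan is to bootstrap from the probabilistic Takens theorem (Theorem~\ref{thm:takens}): that result already hands us a prevalent set $S$ of Lipschitz observables (with the stated polynomial probe set) for which assertion $(a)$ holds, i.e.\ $\phi = \phi_{h,k}$ is injective on a Borel set $X_h$ of full $\mu$-measure, with $T(X_h)=X_h$ in the invariant case. So the entire content to be added is the passage from injectivity to the two statements about conditional measures, and the key realization is that $(b)$ forces $(c)$ almost immediately, and $(b)$ is essentially a soft consequence of injectivity plus the Topological Rokhlin disintegration theorem (Theorem~\ref{thm:top_rohlin}).

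First I would apply Theorem~\ref{thm:top_rohlin} with $Y=\R^k$ and $\phi=\phi_{h,k}$ (which is Borel, being a composition of the Borel/Lipschitz map $T$ and the Lipschitz observable $h$), obtaining the unique system of conditional measures $\{\mu_y\}$ with $\mu_{y,\eps}\to\mu_y$ weak-$^*$ for $\nu$-a.e.\ $y\in\supp\nu$. The point is to identify $\mu_y$ on the injectivity set. Fix $h\in S$ and its set $X_h$. I would show directly that the family $\{\delta_x : x\in\phi^{-1}(y)\cap X_h \text{ when this is a single point}\}$, extended by $0$ elsewhere, satisfies conditions \eqref{item:cond_supp}--\eqref{item:cond_total_prob} of Definition~\ref{defn:cond_measures}: condition \eqref{item:cond_supp} is clear; for \eqref{item:cond_total_prob}, note that since $\phi|_{X_h}$ is injective and $X_h$ has full $\mu$-measure, the pushforward $\nu$ restricted to $\phi(X_h)$ is carried by a set over which $\phi^{-1}$ is a well-defined Borel section, and $\mu(A) = \int_{\phi(X_h)} \mathbf 1_A(\phi^{-1}(y))\, d\nu(y) = \int \delta_{\phi^{-1}(y)}(A)\, d\nu(y)$ by the change-of-variables/pushforward formula applied to the Borel isomorphism $\phi|_{X_h}$ onto its image (a standard fact, e.g.\ via Lusin--Souslin since $X_h$ is Borel and $\phi$ injective Borel). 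Measurability of $y\mapsto\delta_{\phi^{-1}(y)}(A)=\mathbf 1_{\phi(A\cap X_h)}(y)$ again uses that injective Borel images of Borel sets are Borel. By the uniqueness clause of Theorem~\ref{thm:top_rohlin}, $\mu_y=\delta_{\phi^{-1}(y)\cap X_h}$ for $\nu$-a.e.\ $y$, and since $\mu$ is carried by $X_h$, for $\mu$-a.e.\ $x$ we get $\mu_{\phi(x),\eps}\to\delta_x$; after intersecting $X_h$ with the full-measure set where this convergence holds (and its $T$-iterates in the invariant case, to preserve $T(X_h)=X_h$) we obtain $(b)$.

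For $(c)$, fix $h\in S$ and $y=\phi(x)$ with $x\in X_h$ as in $(b)$. The map $\psi = \phi\circ T\colon X\to\R^k$ is bounded (as $X$ is compact, $T$ continuous, $h$ Lipschitz) and continuous $\mu$-a.e.\ — actually I only need it bounded Borel and continuous at $x$: since $\mu_{y,\eps}\to\delta_x$ weak-$^*$ and $\psi$ is continuous at $x$ (as $T$ is Lipschitz hence continuous and $\phi$ is continuous on $X$, $\phi$ being built from the continuous $h$; here I should double-check that $h$ Lipschitz makes $\phi$ continuous, which it does), we get $\chi_\eps(y) = \int \psi\, d\mu_{y,\eps} \to \psi(x) = \phi(Tx)$ and likewise $\int \|\psi - \chi_\eps(y)\|^2\, d\mu_{y,\eps} \to \|\psi(x)-\psi(x)\|^2 = 0$, so $\sigma_\eps(y)\to 0$, i.e.\ $\sigma(y)=0$. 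Thus every $y$ in the full-$\nu$-measure set $\phi(X_h)$ (full measure because $\nu=\phi_*\mu$ and $\mu(X_h)=1$) is predictable, which is $(c)$.

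The main obstacle is purely measure-theoretic bookkeeping in the identification step: one must be careful that $\phi|_{X_h}$, being an injective Borel map on a Borel subset of a compact metric space, takes Borel sets to Borel sets and has Borel inverse (Lusin--Souslin), so that $y\mapsto\mu_y(A)$ is genuinely $\nu$-measurable and the disintegration identity holds; and one must track completions, since Theorem~\ref{thm:top_rohlin} and Definition~\ref{defn:cond_measures} are phrased for complete measures, whereas $\mu$ and $\nu$ here are merely Borel — this is handled by passing to completions, noting a completion of a Borel measure agrees with it on Borel sets and its disintegration restricts appropriately. Everything else — the weak-$^*$ convergence arguments for $(c)$ — is routine once $(b)$ is in hand. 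The $T$-invariant addendum costs nothing: intersect the already-$T$-invariant $X_h$ from Theorem~\ref{thm:takens} with $\bigcap_{n\ge 0} T^{-n}$ of the full-measure convergence set and then saturate, using $T$-invariance of $\mu$.
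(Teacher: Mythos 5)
Your proposal is correct and follows essentially the same route as the paper: Theorem~\ref{thm:takens} for prevalent almost-sure injectivity, then identification of the Rokhlin conditional measures with Dirac masses via the uniqueness clause of Theorem~\ref{thm:top_rohlin} (with the same Lusin--Souslin and completion bookkeeping), yielding (b) and hence (c). The only cosmetic difference is that the paper derives (c) from a general lemma identifying $\sigma(y)$ with the standard deviation of $\phi_{h,k}\circ T$ under $\mu_y$, whereas you verify $\sigma_\eps(y)\to 0$ directly at the points where $\mu_{y,\eps}\to\delta_x$; both rest on the same weak-$^*$ convergence computation.
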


The main ingredients of the proof of Theorem~\ref{thm:convergence_takens} are Theorems \ref{thm:takens} and \ref{thm:top_rohlin}. First, notice that under the assumptions of Theorem~\ref{thm:convergence_takens}, we can use Theorem~\ref{thm:top_rohlin} to show  the existence of a system $\{ \mu_y : y \in \R^k\}$ of conditional measures of $\mu$ with respect to $\phi_{h,k}$, such that for $\nu_{h,k}$-almost every $y \in \R^k$, $\mu_y$ is a Borel probability measure in $X$ satisfying
\begin{equation}\label{eq:mu_eps}
\mu_y = \lim_{\eps \to 0} \mu_{y,\eps}
\end{equation}
in weak-$^*$ topology, where 
\[
\mu_{y,\eps} = \frac{1}{\mu(\phi^{-1}_{h,k}(B(y,\eps)))} \mu|_{\phi^{-1}_{h,k}(B(y,\eps))}
\]
for $\eps > 0$. 

The following lemma shows that for $\nu_{h,k}$-almost every $y \in \R^k$, the prediction error $\sigma(y)$ from Definition~\ref{def:predictability} is equal to the standard deviation of the random variable $\phi_{h,k} \circ T$ with respect to the measure $\mu_y$. Note that the lemma is valid for any continuous (non-necessary Lipschitz) maps $T$ and $h$.

\begin{lem}\label{lem:conv_predict} For $\nu_{h,k}$-almost every $y \in \R^k$,
\[
\sigma(y) = \sqrt{\Var_{\mu_y} (\phi_{h,k} \circ T)},
\]
where 
\[
\Var_{\mu_y} ( \phi_{h,k} \circ T ) = \int \limits_{X} \Big\|\phi_{h,k} \circ T - \int \limits_{X}\phi_{h,k} \circ T d \mu_y\Big\|^2 d\mu_y.
\]
\end{lem}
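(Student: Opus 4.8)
The plan is to work on the full $\nu_{h,k}$-measure set of points $y \in \supp \nu_{h,k}$ for which the weak-$^*$ convergence $\mu_{y,\eps} \to \mu_y$ from Theorem~\ref{thm:top_rohlin} (equivalently, \eqref{eq:mu_eps}) holds, and to show that on this set the limit defining $\sigma(y)$ exists and equals $\sqrt{\Var_{\mu_y}(\phi_{h,k}\circ T)}$. The first observation is that, since $T$ and $h$ are continuous and $X$ is compact, the map $\phi_{h,k}\circ T \colon X \to \R^k$ is continuous and bounded; the same is true of each of its coordinate functions and of $x \mapsto \|\phi_{h,k}(Tx)\|^2$. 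Rewriting the integrals in Definition~\ref{def:predictability} against the normalized restriction $\mu_{y,\eps}$ gives $\chi_\eps(y) = \int \phi_{h,k}(Tx)\,d\mu_{y,\eps}(x)$ and $\sigma_\eps(y)^2 = \int \|\phi_{h,k}(Tx) - \chi_\eps(y)\|^2\,d\mu_{y,\eps}(x)$, both well defined for all $\eps>0$ because $\mu(\phi_{h,k}^{-1}(B(y,\eps)))>0$ when $y \in \supp\nu_{h,k}$.

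Next I would use the weak-$^*$ convergence $\mu_{y,\eps}\to\mu_y$ coordinatewise: applying it to each component of the bounded continuous map $\phi_{h,k}\circ T$ yields $\chi_\eps(y) \to \chi(y) := \int \phi_{h,k}\circ T\, d\mu_y$ as $\eps \to 0$. Then I would expand the square, using the elementary identity
\[
\int \|\phi_{h,k}(Tx) - \chi_\eps(y)\|^2\, d\mu_{y,\eps}(x) = \int \|\phi_{h,k}(Tx)\|^2\, d\mu_{y,\eps}(x) - \|\chi_\eps(y)\|^2,
\]
which holds because $\chi_\eps(y)$ is precisely the $\mu_{y,\eps}$-barycentre of $\phi_{h,k}\circ T$. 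Letting $\eps\to 0$ and using weak-$^*$ convergence once more for the bounded continuous integrand $\|\phi_{h,k}\circ T\|^2$, together with $\chi_\eps(y)\to\chi(y)$, gives
\[
\sigma_\eps(y)^2 \xrightarrow[\eps\to 0]{} \int \|\phi_{h,k}\circ T\|^2\, d\mu_y - \|\chi(y)\|^2 = \Var_{\mu_y}(\phi_{h,k}\circ T).
\]
Hence the limit $\sigma(y) = \lim_{\eps\to 0}\sigma_\eps(y)$ exists and equals $\sqrt{\Var_{\mu_y}(\phi_{h,k}\circ T)}$ for every such $y$, which is a set of full $\nu_{h,k}$-measure.

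The argument is essentially routine, so there is no serious obstacle; the only points requiring care are (i) ensuring that weak-$^*$ convergence of probability measures is applied only to \emph{bounded} continuous functions — which is exactly why compactness of $X$ and continuity of $T$, $h$ are invoked, and why the lemma does not need $T$ or $h$ to be Lipschitz — and (ii) handling the $\R^k$-valued integrand by reducing to its $k$ scalar coordinates and the scalar function $\|\phi_{h,k}\circ T\|^2$. No integrability or domination issues arise since all integrands are bounded on $X$.
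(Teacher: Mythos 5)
Your proposal is correct and follows essentially the same route as the paper: both rest on the weak-$^*$ convergence $\mu_{y,\eps}\to\mu_y$ from Theorem~\ref{thm:top_rohlin}, the continuity and boundedness of $\phi_{h,k}\circ T$ on the compact set $X$, and the resulting convergence $\chi_\eps(y)\to\chi(y)$. The only difference is bookkeeping: you pass to the limit via the barycentre identity $\int\|Z-\chi_\eps(y)\|^2 d\mu_{y,\eps}=\int\|Z\|^2 d\mu_{y,\eps}-\|\chi_\eps(y)\|^2$, whereas the paper splits $\sigma_\eps^2(y)-\Var_{\mu_y}(\phi_{h,k}\circ T)$ into two terms and bounds one of them by $4\|\phi_{h,k}\circ T\|_\infty\,\|\chi_\eps(y)-\chi(y)\|$; both are equally valid.
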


\begin{proof} For simplicity, let us write $\phi = \phi_{h,k}$. Observe first that for $\nu_{h,k}$-almost every $y \in \R^k$, by \eqref{eq:mu_eps} and the continuity of $\phi \circ T$, we have
\begin{equation}\label{eq:chi_conv}
\chi_\eps(y) = \int \limits_X \phi \circ T d\mu_{y,\eps} \underset{\eps \to 0}{\longrightarrow} \chi(y)
\end{equation}
for
\[
\chi(y) = \int \limits_X \phi \circ T d\mu_y.
\]
Moreover,
\begin{align*}
\sigma^2_\eps(y) - \Var_{\mu_y} ( \phi \circ T)
&= \int \limits_X \| \phi \circ T - \chi_\eps(y) \|^2 d\mu_{y,\eps} - \int \limits_{X} \|\phi \circ T - \chi(y)\|^2 d\mu_y\\ 
&= \int \limits_X \| \phi \circ T - \chi_\eps(y) \|^2 d\mu_{y,\eps} - \int \limits_X \| \phi \circ T - \chi(y) \|^2d\mu_{y,\eps}\\
&+ \int \limits_X \| \phi \circ T - \chi(y) \|^2d\mu_{y,\eps} - \int \limits_{X} \|\phi \circ T - \chi(y)\|^2 d\mu_y,\\
&=I + \textit{II}.
\end{align*}
Again by the continuity of $\phi \circ T$, we have $\textit{II} \underset{\eps \to 0}{\longrightarrow} 0$. Furthermore,
\begin{align*}
|I| &\leq
\int \limits_X \big| \| \phi \circ T - \chi_\eps(y) \|^2 - \| \phi \circ T - \chi(y) \|^2 \big|d\mu_{y,\eps}\\
&= 
\int \limits_X \big( \| \phi \circ T - \chi_\eps(y_0) \| + \| \phi \circ T - \chi(y) \| \big) \, \big| \| \phi \circ T - \chi_\eps(y) \| - \| \phi \circ T - \chi(y)\| \big|d\mu_{y,\eps}\\
&\leq 4 \|\phi \circ T\|_\infty \int \limits_X \|\chi_{\eps}(y) - \chi(y)\|d\mu_{y,\eps} = 4 \|\phi \circ T\|_\infty\, \|\chi_{\eps}(y) - \chi(y)\|,
\end{align*}
by the triangle inequality and the fact $\chi_\eps(y) \leq \| \phi \circ T \|_\infty$.
The latter quantity converges to zero by \eqref{eq:chi_conv}. Therefore, $\sigma^2_\eps(y)$ tends to $\Var_{\mu_y} ( \phi \circ T)$ as $\eps \to 0$, so 
$\sigma(y) = \sqrt{\Var_{\mu_y} (\phi \circ T)}$.
\end{proof}

The following corollary is immediate.

\begin{cor}\label{cor:conv_predict} For $\nu_{h,k}$-almost every $y \in \R^k$, $y$ is predictable if and only if $\phi_{h,k} \circ T$ is constant $\mu_y$-almost surely. In particular, $y$ is predictable provided $\mu_y = \delta_x$ for some $x \in X$.
\end{cor}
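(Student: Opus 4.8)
The plan is to read the corollary straight off Lemma~\ref{lem:conv_predict}. First I would invoke that lemma to obtain, for $\nu_{h,k}$-almost every $y \in \R^k$, the identity $\sigma(y) = \sqrt{\Var_{\mu_y}(\phi_{h,k}\circ T)}$; in particular, at every such $y$ the prediction error $\sigma(y)$ exists. By Definition~\ref{def:predictability}, $y$ is predictable precisely when $\sigma(y)=0$, which by the identity is equivalent to $\Var_{\mu_y}(\phi_{h,k}\circ T)=0$.

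Next I would unwind what vanishing of this variance means. Restricting to the $\nu_{h,k}$-full-measure set of those $y$ for which $\mu_y$ is a genuine Borel probability measure (guaranteed by Theorem~\ref{thm:top_rohlin}), the quantity $\Var_{\mu_y}(\phi_{h,k}\circ T) = \int_X \|\phi_{h,k}\circ T - \int_X \phi_{h,k}\circ T\,d\mu_y\|^2\,d\mu_y$ is the integral of a nonnegative function against a probability measure, hence it is zero if and only if the integrand vanishes $\mu_y$-almost everywhere, i.e.\ if and only if $\phi_{h,k}(Tx)$ equals the fixed vector $\int_X \phi_{h,k}\circ T\,d\mu_y \in \R^k$ for $\mu_y$-almost every $x$. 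This is exactly the statement that $\phi_{h,k}\circ T$ is constant $\mu_y$-almost surely, which gives the claimed equivalence.

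Finally, for the ``in particular'' clause: if $\mu_y = \delta_x$ for some $x \in X$, then any measurable map, in particular $\phi_{h,k}\circ T$, is trivially constant $\mu_y$-almost surely (equal to its value at $x$), so the variance vanishes and $y$ is predictable by the equivalence just established.

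I do not anticipate any real obstacle here — the statement is indeed immediate. The only points requiring a word of care are that $\phi_{h,k}\circ T$ is $\R^k$-valued, so ``constant'' must be read as equal to a fixed vector of $\R^k$, and that one must work on the $\nu_{h,k}$-full-measure set on which both Lemma~\ref{lem:conv_predict} applies and $\mu_y$ is a probability measure, the complement being negligible and the statement there vacuous. Everything else reduces to the elementary fact that a nonnegative integrable function with zero integral against a probability measure vanishes almost everywhere.
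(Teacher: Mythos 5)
Your argument is correct and is precisely the intended one: the paper simply declares the corollary ``immediate'' from Lemma~\ref{lem:conv_predict}, and your write-up fills in exactly the expected details (vanishing of $\sigma(y)$ is equivalent to vanishing of the variance, which for a probability measure $\mu_y$ is equivalent to $\phi_{h,k}\circ T$ being $\mu_y$-a.s.\ equal to its mean vector). Nothing further is needed.
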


By Corollary~\ref{cor:conv_predict}, in order to establish almost sure predictability, it is enough to prove the convergence $\lim_{\eps \to 0} \mu_{\phi_{h,k}(x), \eps} = \delta_x$ for almost every $x \in X$. The idea of the proof of Theorem~\ref{thm:convergence_takens} is the following. Theorem~\ref{thm:takens} guarantees that for a prevalent set of observables, the corresponding delay-coordinate map is injective on a set of full $\mu$-measure. On the other hand, Theorem~\ref{thm:top_rohlin} assures that the measures $\mu_{\phi(x),\eps}$ are almost surely convergent as $\eps \to 0$, and the limits form a system of conditional measures of $\mu$ with respect to $\phi_{h,k}$. Almost sure injectivity implies that these conditional measures are almost surely Dirac measures, hence indeed $\lim_{\eps \to 0} \mu_{\phi_{h,k}(x), \eps} = \delta_x$. A detailed proof is presented below.

\begin{proof}[Proof of Theorem \rm\ref{thm:convergence_takens}]
By Theorem \ref{thm:takens}, there exists a prevalent set $S$ of Lipschitz observables $h$, such that for each $h \in S$, the $k$-delay coordinate map $\phi_{h,k}$ is injective on a  Borel set $\tilde X_h \subset X$ of full $\mu$-measure. For $h \in S$, let us denote for simplicity  $\phi = \phi_{h,k}$ and
\[
\tilde Y_h = \phi(\tilde X_h).
\]
Note that $\tilde Y_h$ has full $\nu_{h,k}$-measure. Moreover, $\tilde Y_h$ is Borel, as a continuous and injective image of a Borel set, see \cite[Theorem~15.1]{K95}. Since $\phi$ is injective on $\tilde X_h$, for every $y \in \tilde Y_h$ there exists a unique point $x_y \in \tilde X_h$, such that $\phi(x_y) = y$. For $y \in \R^k$ define
\[
\tilde \mu_y =
\begin{cases}
\delta_{x_y} &\text{for } y \in \tilde Y_h\\
0 &\text{for } y \in \R^k \setminus \tilde Y_h
\end{cases}.
\]
We check that the collection $\{ \tilde\mu_y : y \in \R^k\}$ satisfies the conditions (1)--(3) of Definition~\ref{defn:cond_measures}. The first two conditions are obvious.
To check the third one, take a $\mu$-measurable set $A \subset X$ and note that for $y \in \phi(A \cap \tilde X_h)$, we have $y \in \tilde Y_h$ and $x_y \in A$, so $\tilde \mu_y(A) = \delta_{x_y}(A) = 1$. On the other hand, if $y \in \tilde Y_h \setminus \phi(A \cap \tilde X_h)$, then $x_y \notin A$, so $\tilde \mu_y(A) = \delta_{x_y}(A) = 0$. Since $\tilde \mu_y(A) = 0$ for $y \in \R^k \setminus \tilde Y_h$, we conclude that for
\[
\psi\colon\R^k \to \R, \qquad  \psi(y)= \tilde \mu_y(A)
\]
we have
\begin{equation}\label{eq:delta_meas}
\psi = \mathds{1}_{\phi(A \cap \tilde X_h)}.
\end{equation}
Hence, to show the $\nu_{h,k}$-measurability of $\psi$, it is enough to check that the set $\phi(A \cap \tilde X_h)$ is $\nu_{h,k}$-measurable. To do it, note that since $A$ is $\mu$-measurable, we have $A = B \cup C$, where $B$ is a Borel set and $C \subset D$ for some Borel set $D$ with $\mu(D) = 0$. Hence,  
$\phi(A \cap \tilde X_h) = \phi(B \cap \tilde X_h) \cup \phi(C \cap \tilde X_h)$. The set $\phi(B \cap \tilde X_h)$ is Borel, which again follows from \cite[Theorem~15.1]{K95}, as $\phi$ is continuous and injective on the Borel set $B \cap \tilde X_h$. Similarly, the set $\phi(C \cap \tilde X_h)$ is contained in the Borel set $\phi(D \cap \tilde X_h)$. Since $\tilde X_h$ has full $\mu$-measure, we have 
\[
\nu_{h,k}(\phi(D \cap \tilde X_h)) = \mu(\phi^{-1}(\phi(D \cap \tilde X_h))) = \mu(\phi^{-1}(\phi(D \cap \tilde X_h))\cap \tilde X_h) = \mu(D) = 0. 
\]
This yields the $\nu_{h,k}$-measurability of the set $\phi(A \cap \tilde X_h)$ and the function $\psi$. Moreover, by \eqref{eq:delta_meas},
\begin{align*}
\int \limits_{Y} \tilde \mu_y(A)d\nu_{h,k}(y) &= \nu_{h,k}(\phi(A \cap \tilde X_h))\\
&= \mu(\phi^{-1}(\phi(A \cap \tilde X_h)))\\ 
&= \mu(\phi^{-1}(\phi(A \cap \tilde X_h))\cap \tilde X_h) = \mu(A).
\end{align*}
It follows that $\{ \tilde\mu_y : y \in \R^k\}$ is a system of conditional measures of $\mu$ with respect to $\phi$, so by the uniqueness in Theorem~\ref{thm:top_rohlin} and \eqref{eq:mu_eps}, 
\[
\tilde\mu_y = \mu_y = \lim_{\eps \to 0} \mu_{y,\eps}
\]
for $\nu_{h,k}$-almost every $y \in \R^k$. Since $\tilde Y_h$ is a Borel set of full $\nu_{h,k}$-measure, we have
\begin{equation}\label{eq:Y_h}
\mu_y = \lim_{\eps \to 0} \mu_{y,\eps} = \delta_{x_y}
\end{equation}
for every $y \in Y_h$, where $Y_h \subset \tilde Y_h$ and $Y_h$ is a Borel set of full $\nu_{h,k}$-measure. By Corollary~\ref{cor:conv_predict}, this implies that $\nu_{h,k}$-almost every $y \in \R^k$ is predictable, which proves the assertion (c) in Theorem~\ref{thm:convergence_takens}. 

Define
\[
X_h = \phi^{-1}(Y_h) \cap \tilde X_h.
\]
Then $X_h$ is a Borel full $\mu$-measure subset of $X$. Since $\phi(X_h) \subset Y_h \subset \tilde Y_h$, by \eqref{eq:Y_h} we have
\[
\mu_{\phi(x)} = \lim_{\eps \to 0} \mu_{\phi(x),\eps} = \delta_{x_{\phi(x)}} = \delta_x
\]
for every $x \in X_h$, which shows the assertion (b). Finally, the assertion (a) follows from the fact $X_h \subset \tilde X_h$.

To end the proof of Theorem~\ref{thm:convergence_takens}, note that if the measure $\mu$ is $T$-invariant, we can define $X_h' = \bigcap_{n\in \Z} T^n(X_h)$ to obtain a full $\mu$-measure subset of $X_h$ with $T(X_h') = X_h'$. For details, see the proof of \cite[Remark~4.4(b)]{BGS19}.
\end{proof}

\begin{rem} Similarly as in \cite{BGS19}, the assumptions $\hdim(\mu) < k$ and $\hdim(\mu|_{\Per_p(T)}) < p$ of Theorem \ref{thm:convergence_takens} can be weakened to $\mu \perp \mH^k$ and $\mu|_{\Per_p(T)} \perp \mH^p$, respectively. Moreover, one can prove a version of Theorem~\ref{thm:convergence_takens} for $\beta$-H\"older observables $h\colon X \to \R$, $\beta \in (0,1]$. It is enough to take $k$ with $\mH^{\beta k}(X) = 0$ and assume that $\mu|_{\Per_p(T)}$ is singular with respect to $\mH^{\beta p}$ for $p = 1, \ldots, k-1$, where $\mH^s$ is the $s$-Hausdorff measure. For a precise formulation of required assumptions see \cite[Theorem 4.3]{BGS19}. As previously, the assumption on periodic points can be omitted if the measure $\mu$ is $T$-invariant and ergodic (see \cite[Remark 4.4(c)]{BGS19} and its proof).
\end{rem}

\section{Counterexample to SSOY predictability conjecture -- proof of  Theorem~\ref{thm:counterexample_main}}\label{sec:counterexample}

In this section we prove Theorem~\ref{thm:counterexample_main}, constructing an example of 
a $C^{\infty}$-smooth diffeomorphism $T$ of a compact Riemannian manifold $X$ with an attractor $\Lambda$ endowed with a natural measure $\mu$, such that $\idim(\mu)<1$ 
and for a prevalent set of Lipschitz observables, there is a positive $\nu_{h,1}$-measure set of non-predictable points. In particular, the set of Lip\-schitz observables $h\colon X \to \R$, for which $\nu_{h,1}$-almost sure predictability holds for the $1$-delay coordinate map $\phi_{h,1}$, is not prevalent. Since the proof is quite involved, we shortly describe the subsequent steps.

In Subsection~\ref{subsec:S^1} we construct a model for the natural measure $\mu$. First, we prove that for an irrational rotation on a circle $\mathbb{S}^1 \subset \R^N$ endowed with the Lebesgue measure $\Leb_{\mathbb S^1}$, the only Lipschitz observables $h \colon \mathbb{S}^1\to \R$ such that the almost sure predictability holds for the $1$-delay coordinate map $\phi$, are the constant functions. Then we construct a model $\mu_0$ for the natural measure $\mu$, taking $X_0 = \{p_0\} \cup \mathbb S^1 \subset \R^N$ for some $p_0 \notin \mathbb S^1$ and defining $T_0 \colon X_0 \to X_0$ as the identity on $\{p_0\}$ and an irrational rotation on $\mathbb S^1$. Then the measure $\mu_0 = \delta_{p_0}/2 + \Leb_{\mathbb S^1}/2$ satisfies $\idim(\mu_0) = 1/2 < 1$, yet the only Lipschitz observables $h \colon X_0 \to \R$ yielding almost sure predictability for the $1$-delay coordinate maps are the functions constant on $\mathbb S^1$. The same holds for any  extension $(X,\mu,T)$ of $(X_0,\mu_0, T_0)$ with $X_0 \subset X,\ T|_{X_0} = T_0$ and $\mu = \mu_0$. In particular, the set 
of Lipschitz observables $h\colon X \to \R$ with almost sure predictability for the $1$-delay coordinate map, is not prevalent. Moreover, for a prevalent set of Lipschitz observables, the almost sure predictability does not hold (Corollary~\ref{cor:unnatural_counterexample}).

The main step, performed in Subsections~\ref{subsec:S^2}--\ref{subsec:S^2xS^1} is to realize the model measure $\mu_0$ as a natural measure $\mu$ for a smooth diffeomorphism $T$ of a compact Riemannian manifold $X$. 
In Subsection~\ref{subsec:S^2}, we construct a $C^\infty$-diffeomorphism $f$ of the $2$-dimensional sphere $\mathbb S^2 = \R^2 \cup \{\infty\}$, such that the trajectories of Lebesgue-almost all points of $\mathbb S^2$ spiral towards the invariant unit circle $S = \{(x,y): x^2 + y^2 = 1\}$, spending most of the time in small neighbourhoods of two fixed points $p, q \in S$ (Proposition~\ref{prop:N_i}). It follows that the average of the Dirac measures at $p$ and $q$ is a natural measure for $f$, with the attractor $S$ and basin $\mathbb S^2 \setminus \{(0,0), \infty\}$ (Corollary~\ref{cor:N_i}). Then, in Subsection~\ref{subsec:S^2xS^1}, we take 
\[
X = \mathbb{S}^2 \times \mathbb{S}^1
\]
and define a $C^\infty$-diffeomorphism $T\colon X \to X$ as a skew product of the form
\[
T(z,t) = (f(z), h_{z}(t)), \qquad z \in \mathbb{S}^2,\; t \in \mathbb{S}^1, 
\]
where $h_z$ are diffeomorphisms of $\mathbb{S}^1$ depending smoothly on $z \in \mathbb{S}^2$, such that for $z$ in a neighbourhood of $p$, the map $h_z$ is equal to a map $g\colon\mathbb{S}^1\to\mathbb{S}^1$ with a unique fixed point $0 \in \R/\Z \simeq \mathbb{S}^1$ attracting all points of $\mathbb{S}^1$, while for $z$ in a neighbourhood of $q$, the map $h_z$ is an irrational rotation on $\mathbb{S}^1$. See Figure~\ref{fig:counterexample} for a schematic view of the map $T$. 

\begin{figure}[ht!]
\begin{center}
\includegraphics[height=8cm]{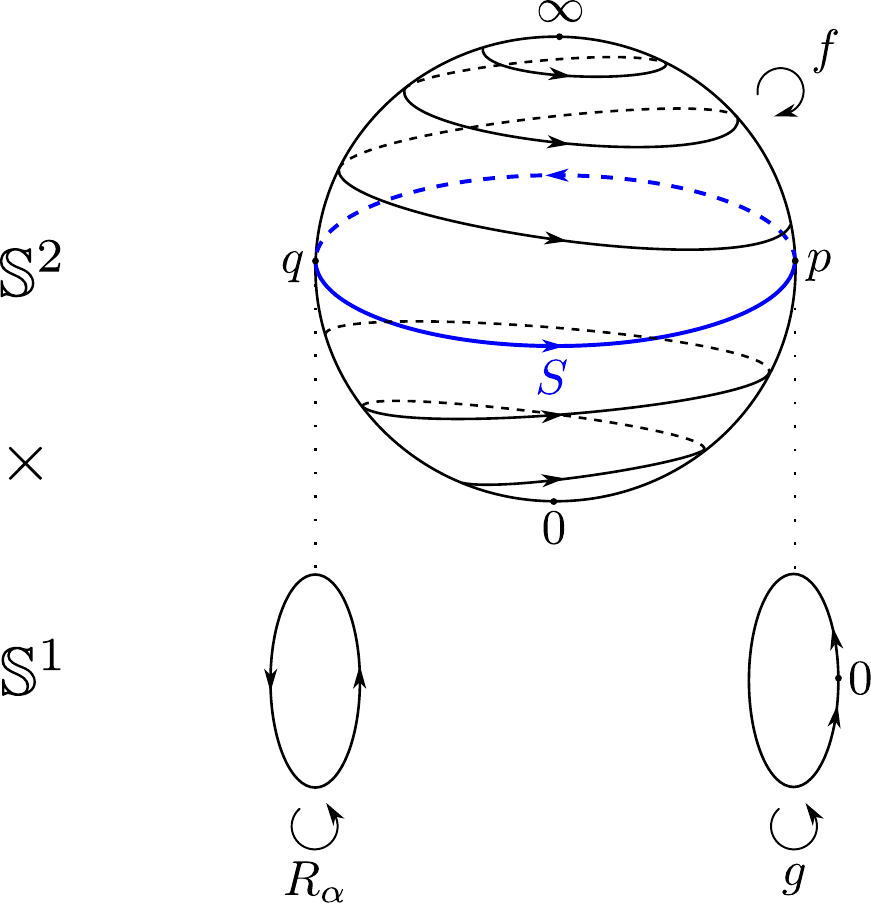}
\end{center}
\caption{Schematic view of the map $T\colon \mathbb S^2 \times \mathbb S^1 \to \mathbb S^2 \times \mathbb S^1$.}
\label{fig:counterexample}
\end{figure}

The map $T$ has an attractor 
\[
\Lambda = S \times \mathbb{S}^1
\]
with the basin $B(\Lambda) = (\mathbb S^2 \setminus\{0, \infty\}) \times \mathbb{S}^1$ and natural measure 
\[
\mu = \frac 1 2 \delta_{p_0} + \frac 1 2 \Leb_{\mathbb S^1},
\]
where $p_0 = (p,0)$ and $\Leb_{\mathbb S^1}$ is the Lebesgue measure on the circle $\{q\} \times \mathbb S^1$ (Theorem~\ref{thm:natural_counterexample}). 
Since the measure $\mu$ is equal to the model measure $\mu_0$, the conclusion follows.

\subsection{Model measure}\label{subsec:S^1}

Consider a circle $\mathbb{S}^1 \subset \R^N$ (by a circle we mean an image of $\{(x,y) \in \R^2: x^2+y^2 = 1\}$ by an affine similarity transformation) with the normalized Lebesgue ($1$-Hausdorff) measure $\Leb_{\mathbb S^1}$ and a rotation $R_\alpha \colon \mathbb{S}^1\to \mathbb{S}^1$ by an angle $\alpha$. We use here an additive notation, i.e.~for an angle coordinate $t \in \R/\Z \simeq \mathbb{S}^1$ we write $R_\alpha(t) = t + \alpha \text{ mod } 1$. We assume $\alpha \in \R \setminus \Q$. By $d(\cdot, \cdot)$ we denote the standard rotation-invariant metric on $\mathbb{S}^1$.

For the system $(\mathbb{S}^1, \Leb_{\mathbb S^1}, R_\alpha)$ we consider Lipschitz observables $h\colon\mathbb S^1 \to \R$ and the corresponding $1$-delay coordinate maps $\phi \colon\mathbb S^1 \to \R$. Note that $1$-delay coordinate maps are equal to the observables, i.e.~$\phi = h$.

\begin{prop}\label{prop:as_predict_implies_constant}
Suppose that for a Lipschitz function $h \colon \mathbb{S}^1\to \R$, $\nu$-almost every $y \in \R$ is predictable for the $1$-delay coordinate map $\phi =h$, where $\nu = \phi_*\Leb_{\mathbb S^1}$. Then $h$ is constant.
\end{prop}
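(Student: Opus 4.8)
The plan is to show that almost sure predictability forces $h$ to satisfy a functional equation which, for a Lipschitz function, is possible only when $h$ is constant.

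\smallskip

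\emph{Reduction to a functional equation.} Since the $1$-delay coordinate map equals the observable ($\phi=h$) and $T=R_\alpha$ preserves $\Leb_{\mathbb S^1}$, Theorem~\ref{thm:top_rohlin} provides the system $\{\mu_y\}$ of conditional measures of $\Leb_{\mathbb S^1}$ with respect to $h$, and Corollary~\ref{cor:conv_predict} (through Lemma~\ref{lem:conv_predict}) says that a $\nu$-a.e.\ point $y$ is predictable iff $h\circ R_\alpha$ is $\mu_y$-a.e.\ constant. By the identity $\Leb_{\mathbb S^1}=\int\mu_y\,d\nu(y)$ this is equivalent to: $h\circ R_\alpha$ agrees $\Leb_{\mathbb S^1}$-a.e.\ with a function of $h$, i.e.\ there is a Borel $c\colon\R\to\R$ with $h\circ R_\alpha=c\circ h$ a.e. Iterating and using $R_\alpha$-invariance of $\Leb_{\mathbb S^1}$, one gets $h\circ R_\alpha^{\,n}=c^{\circ n}\circ h$ a.e.\ for all $n\ge0$; intersecting the relevant full-measure sets yields a Borel set $\mathcal G$ with $\Leb_{\mathbb S^1}(\mathcal G)=1$, forward $R_\alpha$-invariant, such that $h(R_\alpha^{\,n}t)=c^{\circ n}(h(t))$ for all $t\in\mathcal G$, $n\ge0$. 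As $\alpha$ is irrational, the forward orbit of every $t\in\mathcal G$ is dense and contained in $\mathcal G$, so by continuity of $h$ the forward $c$-orbit of $h(t)$ is dense in the whole range $h(\mathbb S^1)$, for every $t\in\mathcal G$.

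\smallskip

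\emph{From the functional equation to constancy.} Suppose towards a contradiction that $h$ is non-constant; by the symmetry reduction below we may assume $h$ is not invariant under any nontrivial rotation of $\mathbb S^1$. The key is to find two \emph{distinct} points $t_1,t_2\in\mathcal G$ with $h(t_1)=h(t_2)$: then $h(R_\alpha^{\,n}t_1)=c^{\circ n}(h(t_1))=c^{\circ n}(h(t_2))=h(R_\alpha^{\,n}t_2)$ for all $n$, so $h$ and $h\circ R_{t_2-t_1}$ agree on the dense set $\{R_\alpha^{\,n}t_1\}$, hence everywhere, contradicting the absence of rotational symmetry. To produce $t_1,t_2$ it suffices to know that $\mu_y$ fails to be a Dirac mass on a set of $y$ of positive $\nu$-measure: for such a $y$ (also having $\mu_y(\mathcal G)=1$), the set $\mathcal G\cap h^{-1}(y)$ cannot be a single point. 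Since $\mu_y$ is a Dirac mass for $\nu$-a.e.\ $y$ exactly when $h$ is injective on a set of full $\Leb_{\mathbb S^1}$-measure, it is enough to prove: \emph{a non-constant Lipschitz $g\colon\mathbb S^1\to\R$ is never injective on a set of full Lebesgue measure.} This is where the Lipschitz hypothesis enters. If $g|_E$ were injective with $\Leb_{\mathbb S^1}(E)=1$, decompose $\mathbb S^1$ (mod null) into $P_+=\{g'>0\}$, $P_-=\{g'<0\}$, $\{g'=0\}$; since $\int_{\mathbb S^1}g'=0$ and $g$ is non-constant, $V:=\int_{P_+}g'=\int_{P_-}|g'|>0$. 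By the co-area formula and injectivity of $g$ on $E$,
\[
V=\int_{P_+\cap E}|g'|\,d\Leb_{\mathbb S^1}=\int_{\R}\#\big(g^{-1}(y)\cap P_+\cap E\big)\,dy\ \le\ \Leb\big(g(P_+\cap E)\big),
\]
and likewise $V\le\Leb(g(P_-\cap E))$; as $g(P_+\cap E)$ and $g(P_-\cap E)$ are disjoint subsets of the interval $g(\mathbb S^1)$, which has length $M-m$ ($m=\min g$, $M=\max g$), we get $2V\le M-m$. But $V\ge M-m$, because the total ascent of $g$ along an arc joining a minimum to a maximum is exactly $M-m$. Hence $V\le 0$, a contradiction.

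\smallskip

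\emph{The symmetry reduction.} The set $H=\{\beta\in\mathbb S^1:h\circ R_\beta=h\}$ is a closed subgroup of $\mathbb S^1$; if $H=\mathbb S^1$ then $h$ is constant (done), otherwise $H=\tfrac1q\Z/\Z$ for some $q\in\N$. Passing to the $q$-fold quotient circle, $h$ descends to a Lipschitz $\bar h$ which has no nontrivial rotational symmetry, $R_\alpha$ descends to the (still irrational) rotation $R_{q\alpha}$, and predictability passes to the quotient: by the uniqueness in Theorem~\ref{thm:top_rohlin} the conditional measures push forward correctly under the covering map, so $\Var$ is preserved and, by Lemma~\ref{lem:conv_predict}, a.e.\ predictability of $h$ forces a.e.\ predictability of $\bar h$. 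Thus one may assume $H=\{0\}$ from the outset.

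\smallskip

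The main obstacle is exactly the passage from the functional equation $h\circ R_\alpha=c\circ h$ to constancy: at the purely measure-theoretic level this equation only says that $h$ is a factor map of the rotation onto a rotation-type system, which is perfectly compatible with $h$ being non-constant (one should think of $c$ as possibly behaving like an irrational rotation of an interval). It is the rigidity coming from $h$ being Lipschitz — changing the value of $h$ costs a definite amount of ``time'', so the descending part of $h$ has positive measure — that excludes this, and encoding that rigidity into the argument via the co-area estimate above is the heart of the proof.
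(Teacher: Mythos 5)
Your proof is correct, and its second half takes a genuinely different route from the paper's. The first half coincides with the paper's argument: Claims 1--2 there are exactly your full-measure, forward-invariant set $\mathcal G$ on which equal $h$-values propagate along $R_\alpha$-orbits, upgraded by density of orbits and continuity to the statement that $h(t_1)=h(t_2)$ for $t_1,t_2\in\mathcal G$ forces $h\circ R_{t_2-t_1}=h$. The divergence is in how the pair $t_1\ne t_2$ is produced and how one finishes. The paper works locally near a minimum of $h$: for every $\eps>0$ it finds $t_1,t_2\in B$ with $0<d(t_1,t_2)<\eps$ and $h(t_1)=h(t_2)$ (using that a Lipschitz map sends null sets to null sets, so the images of the two arcs adjacent to the minimum both have full measure in a common interval and must intersect), and then concludes constancy from uniform continuity, since $h$ is invariant under rotations by arbitrarily small angles; this bypasses any discussion of the symmetry group. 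You instead prove the global statement that a non-constant Lipschitz map $\mathbb S^1\to\R$ is never injective on a set of full Lebesgue measure --- precisely \cite[Example~3.5]{BGS19}, quoted in the remark following the proposition --- via a Banach-indicatrix/co-area estimate; this yields a single pair whose separation you cannot control, whence the extra quotient-by-the-symmetry-group step and the transfer of predictability to the quotient circle. Both routes spend the Lipschitz hypothesis at the same essential place, and your co-area lemma is a clean, reusable statement, at the cost of the quotient overhead. Two small remarks: the detour through Dirac masses is unnecessary, since if no pair existed then $h$ would be injective on the full-measure set $\mathcal G$, directly contradicting your lemma; and the total ascent of $g$ along an arc from a minimum to a maximum is \emph{at least}, not exactly, $M-m$ --- which is the direction of the inequality you actually use.
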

\begin{proof}Take $h$ as in the proposition. The proof that $h$ is constant is divided into four parts, described by the following claims.

\begin{Claim1}
There exists a set $B \subset \mathbb{S}^1$ of full $\Leb_{\mathbb S^1}$-measure, with the following property: if $t_1, t_2 \in B$ and $h(t_1) = h(t_2)$, then $h(R^n_\alpha t_1) = h(R^n_\alpha t_2)$ for every $n \geq 0$.
\end{Claim1}
 
\noindent
For the proof of the above claim, consider the system $\{ \mu_y : y \in \R\}$ of conditional measures of $\Leb_{\mathbb S^1}$ with respect to $\phi = h$, given by Theorem~\ref{thm:top_rohlin}. Let
\[
A = \Big\{ t\in \mathbb{S}^1: h (R_\alpha t) = \int \limits h \circ R_\alpha d\mu_{h(t)} \Big\}.
\]
It follows from Theorem~\ref{thm:top_rohlin} that the map $y \mapsto \int \limits h \circ R_\alpha d\mu_{y}$ is $\nu$-measurable, hence $t \mapsto \int \limits h \circ R_\alpha d\mu_{h(t)}$ is $\Leb_{\mathbb S^1}$-measurable. Consequently, $A$ is a $\Leb_{\mathbb S^1}$-measurable set. By Theorem~\ref{thm:top_rohlin},  
\begin{equation}\label{eq:Leb(A)}
\Leb_{\mathbb S^1}(A) =  \int \limits_\R \mu_y(A) d\nu(y)
\end{equation}
and
\[ \mu_y(A) = \mu_y(A \cap \{h = y\}) = \mu_y \Big( \Big\{ t\in\mathbb{S}^1: h(t) = y \text{ and } h(R_\alpha t) = \int \limits h \circ R_\alpha d\mu_{y} \Big\} \Big). \] 
Since $\nu$-almost every $y \in \R$ is predictable, Lemma~\ref{lem:conv_predict} implies that the function $h \circ R_\alpha$ is constant $\mu_y$-almost surely for $\nu$-almost every $y \in \R$, hence $\mu_y(A) = 1$ for $\nu$-almost every $y \in \R$. Therefore, \eqref{eq:Leb(A)} gives $\Leb_{\mathbb S^1}(A) = 1$.

Let 
\[
B = \bigcap \limits_{n=0}^{\infty} R_\alpha^{-n}(A).
\]
Then $B$ has full $\Leb_{\mathbb S^1}$-measure. Moreover, the definition of $A$ implies
that if $t_1, t_2 \in A$ and $h(t_1) = h(t_2)$, then $h(R_\alpha t_1) = h(R_\alpha t_2)$. Therefore, if $t_1, t_2 \in B$ and $h(t_1) = h(t_2)$, then $h(R^n_\alpha t_1) = h(R^n_\alpha t_2)$ for every $n \geq 0$.

\begin{Claim2}
If $t_1, t_2 \in B$ and $h(t_1) = h(t_2)$, then $h(t_1 + s) = h(t_2 + s)$ for every $s \in \mathbb{S}^1$.
\end{Claim2}

\noindent
In order to prove the claim, assume that $t_1, t_2 \in B$ and $h(t_1) = h(t_2)$. Fix $s \in \mathbb{S}^1$. Since $\alpha \notin \Q$, every orbit under $R_\alpha$ is dense in $\mathbb{S}^1$, so there exists a sequence $n_k \to \infty$ with $R^{n_k}_\alpha t_1 \to t_1+s$ as $k \to \infty$. Then $R^{n_k}_\alpha t_2 \to t_2+s$. As $t_1, t_2 \in B$ and $h(t_1)=h(t_2)$, by Claim~1 we have $h(R^{n_k}_\alpha t_1) = h(R^{n_k}_\alpha t_2)$, hence the continuity of $h$ gives $h(t_1 + s) = h(t_2 + s)$. 

\begin{Claim3}
For every $\eps>0$, there exist $t_1, t_2 \in B$ such that $0 < d(t_1, t_2) < \eps$ and $h(t_1) = h(t_2)$.
\end{Claim3}
\noindent

To prove Claim 3, note first that it holds trivially if the set $h^{-1}\left( \left\{\inf h \right\} \right)$ has non-empty interior. Otherwise, fix a small $\eps > 0$ and take $t_0 \in \mathbb{S}^1$ such that $h(t_0) = \inf h$. Then by the continuity of $h$, there exist disjoint open arcs $I, J \subset \mathbb{S}^1$ of length smaller than $\eps/2$, such that $\overline I \cap \overline J = \{ t_0 \}$ and their images $h(I),\ h(J)$ are intervals of positive length with $\overline{h(I)} = \overline{h(J)} = K$ for some closed, non-degenerate interval $K \subset \R$. As $B$ is of full $\Leb_{\mathbb S^1}$-measure and $h$ is Lipschitz, $h(I \cap B)$ and $h(J \cap B)$ both have full Lebesgue measure in $K$, hence $h(I \cap B) \cap h(J \cap B) \neq \emptyset$. This proves the claim.

\begin{Claim4}
$h$ is constant.
\end{Claim4}

\noindent
For the proof of Claim 4, fix a small $\delta>0$. As $h$ is uniformly continuous, there exists $\eps>0$ such that $|h(t) - h(t')| < \delta$ whenever $d(t,t') < \eps$. According to Claim~3, there exist $t_1, t_2 \in B$ such that $0<d(t_1, t_2) < \eps$ and $h(t_1) = h(t_2)$. Let $\beta = t_2 - t_1\text{ mod }1$ and note that $\beta \ne 0$, $|\beta| < \eps$. Applying inductively Claim~2 to $t_1, t_2$ with $s = \beta, \ldots, (n-1)\beta \text{ mod }1$, for $n \in \N$, we obtain $h(t_1) = h(t_1 + \beta\text{ mod }1) = \cdots = h(t_1 + n\beta\text{ mod }1)$. Again by Claim~2, we arrive at $h(0) = h(n\beta\text{ mod }1)$ for $n \in \N$. 

Take $t \in \mathbb{S}^1$. As $|\beta| < \eps$, for every $t \in \mathbb{S}^1$ there exists $n \in \N$ such that $d(t, n\beta\text{ mod }1) < \eps$. For such $n$ we have $|h(t) - h(0)| = |h(t) - h(n\beta\text{ mod }1)| < \delta$. As $\delta$ was arbitrary, we have $h(t) = h(0)$. Therefore, $h$ is constant.
\end{proof}

\begin{rem}
In \cite[Example 3.5]{BGS19} it is shown that there does not exist a Lipschitz map $h \colon \mathbb{S}^1\to \R$ which is injective on a set of full $\Leb_{\mathbb{S}^1}$-measure. However, it may still happen that for certain Lipschitz transformations $T \colon \mathbb{S}^1\to \mathbb{S}^1$ almost sure predictability holds for every $h$, e.g.~if $T$ is the identity.
\end{rem}

\begin{cor}\label{cor:unnatural_counterexample}
Let $X \subset \R^N$ be a compact set with a Borel probability measure $\mu$ and let $T\colon X \to X$ be an injective Lipschitz map, such that 
\[
(\supp \mu, \mu, T|_{\supp \mu}) = (X_0, \mu_0, T_0), 
\]
where $X_0 = \{p_0\} \cup \mathbb{S}^1$ for a circle $\mathbb{S}^1 \subset \R^N$ and $p_0 \in \R^N \setminus \mathbb{S}^1$, 
\[
\mu_0 = \frac 1 2 \delta_{p_0} + \frac 1 2 \Leb_{\mathbb{S}^1},
\]
and $T_0 \colon X_0 \to X_0$, such that $T_0(p_0)=p_0$ and $T_0$ is an irrational rotation $R_\alpha$ on $\mathbb{S}^1$. Set $\nu = \phi_*\mu$. Then $\idim(\mu) = 1/2$ and the only Lip\-schitz observables $h \colon X \to \R$, such that $\nu$-almost every $y \in \R^k$ is predictable for the $1$-delay coordinate map $\phi =h$, are the ones constant on $\mathbb{S}^1$.
Consequently, for a prevalent set of Lipschitz observables, there is a positive $\nu$-measure set of non-predictable points. In particular, the set of  Lipschitz observables $h\colon X \to \R$ for which $\nu$-almost every point of $\R$ is predictable, is not prevalent.
\end{cor}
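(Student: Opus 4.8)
The plan is to reduce everything to the circle case of Proposition~\ref{prop:as_predict_implies_constant}, combined with Lemma~\ref{lem:conv_predict}, Theorem~\ref{thm:top_rohlin}, and an explicit description of the conditional measures of $\mu=\frac12\delta_{p_0}+\frac12\Leb_{\mathbb{S}^1}$ with respect to $\phi=h$. First I would compute $\idim(\mu)$. Since $p_0\notin\mathbb{S}^1$ and $\mathbb{S}^1$ is compact, $\rho:=\dist(p_0,\mathbb{S}^1)>0$, so for $\eps<\rho$ we have $\mu(B(p_0,\eps))=\frac12$, whence $\log\mu(B(p_0,\eps))/\log\eps\to0$, while for $x\in\mathbb{S}^1$ we have $\mu(B(x,\eps))=\frac12\Leb_{\mathbb{S}^1}(B(x,\eps))$, comparable to $\eps$ uniformly in $x$, whence $\log\mu(B(x,\eps))/\log\eps\to1$ uniformly. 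Integrating against $\mu$ and using bounded convergence gives $\idim(\mu)=\frac12\cdot0+\frac12\cdot1=\frac12<1$.

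For the characterization, one inclusion is a short direct computation: if $h|_{\mathbb{S}^1}\equiv c$, then $\nu$ is carried by the at most two points $h(p_0)$ and $c$, and for small $\eps$ the set $\phi^{-1}(B(y,\eps))\cap\supp\mu$ is one of $\{p_0\}$, $\mathbb{S}^1$, $\supp\mu$, so by Theorem~\ref{thm:top_rohlin} the conditional measure $\mu_y$ is $\delta_{p_0}$, $\Leb_{\mathbb{S}^1}$, or $\mu$; since $T(p_0)=p_0$, $T(\mathbb{S}^1)=\mathbb{S}^1$ and $h|_{\mathbb{S}^1}\equiv c$, in each case $\phi\circ T=h\circ T$ is constant on $\supp\mu_y$, and Lemma~\ref{lem:conv_predict} gives $\nu$-almost sure predictability.

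The substantial direction is the converse. Assume $\nu$-a.e.\ $y$ is predictable and put $\tilde h=h|_{\mathbb{S}^1}$, $\tilde\nu=\tilde h_*\Leb_{\mathbb{S}^1}$, so $\nu=\frac12\delta_{h(p_0)}+\frac12\tilde\nu$. For $y\in\supp\nu\setminus\{h(p_0)\}$ and $\eps<|y-h(p_0)|$ the preimage $\phi^{-1}(B(y,\eps))$ omits $p_0$, hence $\mu_{y,\eps}$ coincides with the analogous conditional object $\tilde\mu_{y,\eps}$ for the circle system $(\mathbb{S}^1,\Leb_{\mathbb{S}^1},R_\alpha,\tilde h)$; letting $\eps\to0$ and applying Theorem~\ref{thm:top_rohlin} to both systems gives $\mu_y=\tilde\mu_y$, a probability measure carried by $\mathbb{S}^1$, on which $\phi\circ T=\tilde h\circ R_\alpha$. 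At the remaining atom $y=h(p_0)$, a similar passage to the limit --- using continuity from above of $\Leb_{\mathbb{S}^1}$ along $\tilde h^{-1}(B(h(p_0),\eps))\downarrow\tilde h^{-1}(\{h(p_0)\})$ --- shows $\mu_{h(p_0)}=\frac1{1+a}\delta_{p_0}+\frac a{1+a}\tilde\mu_{h(p_0)}$ with $a=\Leb_{\mathbb{S}^1}(\tilde h^{-1}(\{h(p_0)\}))$, and since $h(T(p_0))=h(p_0)$, predictability at this $y$ (via Corollary~\ref{cor:conv_predict}) forces $\tilde h\circ R_\alpha$ to be $\tilde\mu_{h(p_0)}$-a.s.\ constant. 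Thus, by Lemma~\ref{lem:conv_predict} applied to the circle system, $\tilde\nu$-a.e.\ $y$ is predictable for $(\mathbb{S}^1,\Leb_{\mathbb{S}^1},R_\alpha,\tilde h)$, and Proposition~\ref{prop:as_predict_implies_constant} forces $\tilde h=h|_{\mathbb{S}^1}$ to be constant.

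Finally, for the prevalence conclusions, the set $V=\{h\in\Lip(X):h|_{\mathbb{S}^1}\text{ constant}\}$ is a proper closed linear subspace of $\Lip(X)$; picking any $h_1\in\Lip(X)$ with $h_1|_{\mathbb{S}^1}$ nonconstant, for each $h$ at most one $\alpha\in\R$ satisfies $h+\alpha h_1\in V$, so $V^c$ is prevalent with probe set $\{h_1\}$. By the characterization, the set of observables yielding $\nu$-a.s.\ predictability is contained in $V$ and therefore not prevalent (else $V$, hence its subset, would be prevalent, contradicting prevalence of $V^c$); and for every $h\in V^c$ the $\nu$-a.s.\ predictability fails, so, $\sigma(y)$ existing for $\nu$-a.e.\ $y$ by Lemma~\ref{lem:conv_predict}, there is a positive $\nu$-measure set of non-predictable points --- which, as $V^c$ is prevalent, is the ``consequently'' claim. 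I expect the main obstacle to be the behaviour at the atom $y=h(p_0)$ in the converse: one must verify that $\mu_{h(p_0)}$ is a genuine convex combination of $\delta_{p_0}$ and $\tilde\mu_{h(p_0)}$ and that predictability there descends to the circle fiber; the remaining steps are routine.
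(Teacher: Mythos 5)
Your proposal is correct and follows essentially the same route as the paper: reduce the predictability hypothesis to the circle system and invoke Proposition~\ref{prop:as_predict_implies_constant}, then observe that the set $Z$ of observables constant on $\mathbb{S}^1$ is a proper closed subspace whose complement is prevalent, so no subset of $Z$ can be prevalent. The only differences are that you spell out the reduction to the circle (including the conditional measure at the atom $y=h(p_0)$), which the paper dismisses as an immediate consequence, and that you use a single probe function $h_1$ with $h_1|_{\mathbb{S}^1}$ nonconstant instead of the paper's basis of affine polynomials --- both of which are valid.
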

\begin{proof}
The fact $\idim(\mu) = \idim(\mu_0) = 1/2$ follows from the definition of the information by a direct checking. The assertion that only observables constant on $\mathbb{S}^1$ give almost sure predictability is an immediate consequence of Proposition~\ref{prop:as_predict_implies_constant}. Consider now the space $\Lip(X)$ of all Lipschitz observables $T\colon X \to X$, with the Lipschitz norm $\|h\|_{\Lip}$ (see Subsection~\ref{subsec:prev}), and let $Z \subset \Lip(X)$ be the set of Lipschitz observables which are constant on $\mathbb{S}^1$. Note first that any prevalent set is dense (see \cite[Section 5.1]{Rob11}), while $Z$ is not dense in $\Lip(X)$ in the supremum norm (hence also in the Lipschitz norm). Therefore, $Z$ is not prevalent in $\Lip(X)$. In fact, we can prove more, showing that $\Lip(X) \setminus Z$ is prevalent (note that a subset of the complement of a prevalent set cannot be prevalent, as the intersection of two prevalent sets is prevalent, see \cite{Prevalence92}). 

In order to prove prevalence of $\Lip(X) \setminus Z$, we can assume that the circle $\mathbb{S}^1 \subset X$ is of the form $\mathbb{S}^1 = \{ (x_1, \ldots, x_N) \in \R^N : x_1^2 + x_2^2 = 1,\ x_3 = 0, \ldots, x_N = 0  \}$. Indeed, an affine change of coordinates in $\R^N$ transforming the circle in $X$ to the circle $\{ (x_1, \ldots, x_N) \in \R^N : x_1^2 + x_2^2 = 1,\ x_3 = 0, \ldots, x_N = 0 \}$ induces a linear isomorphism between the corresponding spaces of Lipschitz observables. Like in Theorem~\ref{thm:convergence_takens}, we show the prevalence of $\Lip(X) \setminus Z$ with the probe set equal to a linear basis of the space of real polynomials of $N$ variables of degree at most $1$. In other words, we should check that for any $h \in \Lip(X)$, we have $h + \alpha_0 + \alpha_1 h_1 + \cdots + \alpha_N h_N \notin Z$ for Lebesgue-almost every $\alpha = (\alpha_0, \ldots, \alpha_N) \in \R^{N+1}$, where $h_j(x_1, \ldots, x_N) = x_j$, $j = 1, \ldots, N$. Let $e_1, \ldots, e_N$ be the standard basis of $\R^N$. If $h + \alpha_0 + \alpha_1 h_1 + \cdots + \alpha_N h_N \in Z$, then evaluating at $e_1,e_2 \in \mathbb{S}^1$ gives
\[ h(e_1) + \alpha_0 + \alpha_1 = h(e_2) + \alpha_0 + \alpha_2. \]
Therefore $\alpha_1 = \alpha_2 + h(e_2) - h(e_1)$, so $\alpha$ belongs to an affine subspace of $\R^{N+1}$ of codimension one. It follows that given $h \in \Lip(X)$, we have $h + \alpha_0 + \alpha_1 h_1 + \cdots + \alpha_N h_N \in Z$ for $(\alpha_0, \ldots, \alpha_N)$ in a set of zero Lebesgue measure in $\R^{N+1}$, which ends the proof.
\end{proof}

\subsection{\boldmath Construction of the diffeomorphism $f\colon \mathbb S^2 \to \mathbb S^2$}
\label{subsec:S^2}

In this subsection we construct a smooth diffeomorphism $f$ of $\mathbb S^2 \simeq \R^2 \cup \{\infty\}$ with the invariant unit circle $S$ containing two fixed points $p, q$, such that the trajectories of all points in $\R^2 \setminus \{0,0)\}$ spiral towards the invariant unit circle $S$, spending most of the time in small neighbourhoods of $p$ and $q$.

We consider points $(x,y) \in \R^2$ in polar coordinates, i.e.~$x = r \cos \varphi$, $y = r \sin \varphi$ for $r \in [0, +\infty)$, $\varphi \in \R$. Let
\[
f(r\cos \varphi, r\sin \varphi) = (R(r) \cos \Phi(r, \varphi), R(r) \sin \Phi(r, \varphi))
\]
for
\[
R(r) = r + \varepsilon \frac{r(1-r)^3}{1+r^4}, \qquad \Phi(r, \varphi) = \varphi + \varepsilon \theta(\varphi) + (1-r)^2 \eta(r),
\]
where $\varepsilon > 0$ is a small constant, $\theta\colon \R \to [0, +\infty)$ is a $\pi$-periodic $C^\infty$-function such that $\theta(\varphi) = \varphi^2$ for $\varphi \in (-\pi/4, \pi/4)$ and $\theta$ has no zeroes except for $k\pi$, $k \in \Z$, while $\eta \colon [0, +\infty) \to [0, +\infty)$ is a $C^\infty$-function such that $\eta|_{[\frac{1}{2}, \frac{3}{2}]} \equiv 1$, $\eta > 0$ on $(0,\infty)$ and $\lim_{r \to 0^+} (1-r)^2\eta(r) = \lim_{r \to +\infty} (1-r)^2\eta(r) = 0$ (the role of $\eta$ is to ensure that $f$ extends to a $C^{\infty}$-diffeomorphism of the sphere). The following two lemmas are elementary.

\begin{lem}\label{lem:R} For sufficiently small $\varepsilon >0$, the function $R$ has the following properties.
	\begin{itemize}
		
		\item[(a)] $R$ is an increasing homeomorphism of $[0,+\infty)$.
		\item[(b)] $R(0) = 0$, $R(r) > r$ for $r \in (0,1)$, $R(1) = 1$ and $R(r) < r$ for $r \in (1, +\infty)$.
		\item[(c)] Near $r = 1$, $R$ has the Taylor expansion $R(r) = 1 + r-1 -\frac{\varepsilon}{2} (r-1)^3 + \cdots$.
	\end{itemize}
\end{lem}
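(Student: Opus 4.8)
\textbf{Proof plan for Lemma~\ref{lem:R}.}

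The plan is to verify the three properties by direct analysis of the explicit formula $R(r) = r + \varepsilon \frac{r(1-r)^3}{1+r^4}$, exploiting that the $\varepsilon$-dependent perturbation term and all its derivatives are uniformly bounded on $[0,\infty)$ (the denominator $1+r^4$ dominates the numerator, which is a degree-$4$ polynomial in $r$, so $g(r) := \frac{r(1-r)^3}{1+r^4}$ is bounded and decays like $-1$ times a bounded function; more importantly $g$ is $C^\infty$ on $[0,\infty)$ with bounded derivative). First I would establish (b), which requires no smallness of $\varepsilon$ at all: since $1+r^4 > 0$ and $r \geq 0$, the sign of $g(r)$ equals the sign of $r(1-r)^3$, which is $0$ at $r=0$ and $r=1$, positive on $(0,1)$, and negative on $(1,\infty)$; adding $r$ preserves these (in)equalities, giving $R(0)=0$, $R(1)=1$, $R(r)>r$ on $(0,1)$ and $R(r)<r$ on $(1,\infty)$.

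Next I would prove (a). Continuity of $R$ is clear. For monotonicity I would compute $R'(r) = 1 + \varepsilon g'(r)$ and note that $g'$ is bounded on $[0,\infty)$, say $|g'(r)| \leq M$ for an absolute constant $M$; then for $\varepsilon < 1/M$ we get $R'(r) \geq 1 - \varepsilon M > 0$, so $R$ is strictly increasing. Since $R(0)=0$ and $R(r) \to +\infty$ as $r \to \infty$ (because $g$ is bounded so $R(r) = r + O(1)$), $R$ is a homeomorphism of $[0,+\infty)$ onto itself. The only mild point to check is the boundedness of $g'$, which is a routine estimate: differentiating the rational function and bounding each term using that the denominator grows faster than every term in the numerator and its derivative.

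Finally, for (c) I would simply Taylor-expand $R$ at $r=1$. Writing $u = r-1$, the factor $(1-r)^3 = -u^3$, while $\frac{r}{1+r^4} = \frac{1+u}{1+(1+u)^4} = \frac{1}{2} + O(u)$ near $u=0$. Hence $g(r) = -u^3\left(\tfrac12 + O(u)\right) = -\tfrac12 u^3 + O(u^4)$, so $R(r) = r + \varepsilon g(r) = 1 + u - \tfrac{\varepsilon}{2} u^3 + \cdots$, which is exactly the claimed expansion. I do not anticipate a genuine obstacle here: the lemma is stated as ``elementary'' and the only thing one must be slightly careful about is fixing the absolute constant $M$ bounding $\|g'\|_\infty$ so that ``sufficiently small $\varepsilon$'' can be made precise; none of (a)--(c) is deep, and (b) and (c) in fact hold for all $\varepsilon > 0$.
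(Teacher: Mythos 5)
Your proposal is correct and is exactly the direct verification the paper has in mind (the paper simply declares Lemma~\ref{lem:R} elementary and omits the proof): sign analysis of $r(1-r)^3/(1+r^4)$ for (b), a uniform bound on the derivative of that perturbation to get $R'>0$ for small $\varepsilon$ in (a), and the Taylor expansion $\frac{r}{1+r^4}=\frac12+O(r-1)$ for (c). All three computations check out, including your observation that (b) and (c) need no smallness of $\varepsilon$.
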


\begin{lem}\label{lem:Phi} For sufficiently small $\varepsilon >0$, the function $\Phi$ has the following properties.
	\begin{itemize}
		\item[(a)] $\Phi(r, \varphi) > \varphi$ for $r \in ((0,1) \cup (1, +\infty))$.
		\item[(b)] For given $r \in (0, +\infty)$, the function $\varphi \mapsto \Phi(r, \varphi)$ is strictly increasing.
		\item[(c)] For the function $\varphi \mapsto \Phi(1, \varphi) \mod 2\pi$, the points $0, \pi$ are the unique fixed points and the intervals $(0, \pi), (\pi, 2\pi)$ are invariant.
	\end{itemize}
\end{lem}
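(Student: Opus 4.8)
The plan is to read off all three assertions directly from the explicit formula $\Phi(r,\varphi)=\varphi+\varepsilon\theta(\varphi)+(1-r)^2\eta(r)$, using only the following features of $\theta$ and $\eta$: $\theta\ge 0$ with $\theta(\varphi)=0$ exactly when $\varphi\in\pi\Z$; $\theta$ is $C^\infty$ and $\pi$-periodic (hence $2\pi$-periodic), so that $M:=\sup_{\R}|\theta'|$ and $\Theta:=\sup_{\R}\theta$ are finite; and $\eta>0$ on $(0,+\infty)$. I would fix at the outset that $\varepsilon>0$ is small enough that $\varepsilon M<1$ and $\varepsilon\Theta<2\pi$ (these, together with the smallness needed in Lemma~\ref{lem:R}, are what ``sufficiently small $\varepsilon$'' will mean).

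For part (a), write $\Phi(r,\varphi)-\varphi=\varepsilon\theta(\varphi)+(1-r)^2\eta(r)$. The first summand is nonnegative since $\varepsilon>0$ and $\theta\ge 0$, while the second is strictly positive for every $r\in(0,1)\cup(1,+\infty)$ because then $(1-r)^2>0$ and $\eta(r)>0$; hence $\Phi(r,\varphi)>\varphi$. For part (b), differentiate in $\varphi$: $\partial_\varphi\Phi(r,\varphi)=1+\varepsilon\theta'(\varphi)\ge 1-\varepsilon M>0$, and this bound is independent of $r$, so $\varphi\mapsto\Phi(r,\varphi)$ is strictly increasing.

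For part (c), note that at $r=1$ the last term of $\Phi$ vanishes, so $\Phi(1,\varphi)=\varphi+\varepsilon\theta(\varphi)$; since $\theta$ is $2\pi$-periodic this is a lift of a circle map, and by (b) it is a strictly increasing lift, so it descends to an orientation-preserving homeomorphism $\bar\Phi$ of $\R/2\pi\Z$. A point $\varphi$ is fixed by $\bar\Phi$ iff $\varepsilon\theta(\varphi)\in 2\pi\Z$; as $0\le\varepsilon\theta(\varphi)\le\varepsilon\Theta<2\pi$, this forces $\varepsilon\theta(\varphi)=0$, i.e.\ $\varphi\in\pi\Z$, so the only fixed points modulo $2\pi$ are $0$ and $\pi$. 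Finally $\Phi(1,0)=0$, $\Phi(1,\pi)=\pi$ and $\Phi(1,2\pi)=2\pi$ (all because $\theta$ vanishes on $\pi\Z$), so by the monotonicity from (b) one gets $\Phi(1,\varphi)\in(0,\pi)$ for $\varphi\in(0,\pi)$ and $\Phi(1,\varphi)\in(\pi,2\pi)$ for $\varphi\in(\pi,2\pi)$; thus both arcs are invariant.

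Every step here is a one-line verification, so there is no genuine obstacle; the only point requiring care is the bookkeeping of the smallness constraints on $\varepsilon$ — one needs $\varepsilon<1/M$ for the monotonicity in (b) and $\varepsilon<2\pi/\Theta$ to exclude spurious fixed points in (c), and these must be compatible with the constraints already imposed in Lemma~\ref{lem:R}, so in the end $\varepsilon$ is taken below the minimum of all these thresholds.
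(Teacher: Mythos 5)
Your proof is correct and is precisely the elementary verification the paper omits (the authors simply state that Lemmas~\ref{lem:R} and~\ref{lem:Phi} are elementary and give no argument). The direct computation from the explicit formula for $\Phi$, together with the two smallness constraints $\varepsilon<1/\sup|\theta'|$ and $\varepsilon<2\pi/\sup\theta$, is exactly what is needed.
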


Let
\[
\B = \{(x,y) \in \R^2 : \|(x,y)\| < 1\}, \qquad S= \{(x,y) \in \R^2 : \|(x,y)\| = 1\},
\]
where $\| \cdot \|$ denotes the Euclidean norm. For sufficiently small $\varepsilon$, the function $f$ defines a $C^\infty$-diffeomorphism of $\R^2$, such that the unit disc $\B$, the unit circle $S$ and the complement of $\overline \B$ are $f$-invariant. Compactifying $\R^2$ to the Riemann sphere $\mathbb S^2 \simeq \R^2 \cup \{\infty\}$ and putting $f(\infty) = \infty$, we extend $f$ to a $C^\infty$-diffeomorphism of $\mathbb S^2$ with fixed points at $(0, 0)$ and $\infty$. 
Another two fixed points,
\[
p = (1,0), \qquad q = (-1, 0),
\]
corresponding to the fixed points described in Lemma~\ref{lem:Phi}(c), are located in the unit circle $S$.

Now we analyse the behaviour of the orbits of points $(x, y) \in \mathbb S^2$ under $f$. By Lemma~\ref{lem:Phi}, if $(x, y) = (\cos \varphi_0, \sin \varphi_0) \in S$ for some $\varphi_0 \in \R$, then $f^n(x, y)$ tends to $p$ (resp.~to $q$) as $n \to \infty$ for $\varphi_0 \in (-\pi, 0]$ $\text{mod }2\pi$ (resp.~$\varphi_0 \in (0, \pi]$ $\text{mod }2\pi$). Suppose now  $(x, y) \in \mathbb S^2 \setminus S$. Recall that the points $(0, 0)$ and $\infty$ are fixed, so we can assume $(x, y) \in \mathbb \R^2 \setminus (S \cup \{(0,0)\})$. Then
\[
(x, y) = (r_0 \cos\varphi_0,r_0 \sin\varphi_0)
\]
for $r_0 \in \R \setminus \{1\}$, $\varphi_0 \in \R$. The goal of this subsection is to prove
\[ \lim \limits_{N \to \infty} \frac{1}{N} \sum \limits_{n=0}^{N-1} \delta_{f^n(x, y)} = \frac 1 2 \delta_p + \frac 1 2 \delta_q \]
in the sense of weak-$^*$ convergence (see Corollary \ref{cor:N_i}). To this aim, we find the asymptotics of the subsequent times spent by the iterates of $(x, y)$ in small neighbourhoods of the points $p$ and $q$. We will make calculations only for the case 
\[
r_0 \in (0,1),
\]
since the functions $R$, $\Phi$ are defined such that the behaviour of the trajectories in the case of points $r_0 > 1$ is symmetric (see Remark~\ref{rem:symmetry}). From now on, we fix the initial point $(x, y) = (r_0 \cos\varphi_0,r_0 \sin\varphi_0)$ with $r_0 \in (0,1)$ and allow all the constants appearing below to depend on this point. For $n \in \N$ let
\[
r_n = R^n(r_0)
\]
and define inductively
\[
\varphi_{n+1} = \Phi(r_n, \varphi_n). 
\]
Then 
\[
f^n(r_0\cos\varphi_0, r_0\sin\varphi_0) = (r_n \cos\varphi_n, r_n \sin\varphi_n).
\]
For convenience, set
\[
\rho_n = 1 -r_n
\]
and note that by Lemma~\ref{lem:R}, $\rho_n$ decreases to $0$ as $n \to \infty$.

\begin{lem}\label{lem:R^n} We have
	\[
	\rho_n = \frac{a + o(1)}{\sqrt{n}}
	\]
	as $n \to \infty$ for some $a > 0$. Moreover, for every $0 \le k \le n$,
	\[ \frac{k}{cn^{3/2}} \le \rho_n - \rho_{n+k} \le \frac{ck}{n^{3/2}},
	\]
where $c > 0$  is independent of $n$ and $k$.
\end{lem}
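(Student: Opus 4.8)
The plan is to linearize the recursion by passing to $u_n = \rho_n^{-2}$. From the definition of $R$ one has the \emph{exact} identity
\[
\rho_{n+1} = \rho_n - \varepsilon\, g(\rho_n)\, \rho_n^3, \qquad g(\rho) := \frac{1-\rho}{1+(1-\rho)^4},
\]
and by Lemma~\ref{lem:R} the sequence $\rho_n$ decreases to $0$ (and stays strictly positive, since $r_0<1$); moreover $g$ is continuous and strictly positive on $[0,\rho_0]$, so $m \le g(\rho_n) \le M$ for some constants $0 < m \le M$, with $g(\rho_n) \to g(0) = 1/2$.

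First I would establish the leading asymptotics. Writing $u_n = \rho_n^{-2}$ and using $(1-x)^{-2} = 1 + 2x + O(x^2)$ with $x = \varepsilon g(\rho_n)\rho_n^2 \to 0$, together with $u_n\rho_n^2 = 1$, one gets
\[
u_{n+1} = u_n\bigl(1 - \varepsilon g(\rho_n)\rho_n^2\bigr)^{-2} = u_n + 2\varepsilon g(\rho_n) + O(\rho_n^2).
\]
Since $\rho_n \to 0$ this yields $u_{n+1} - u_n \to \varepsilon$, so by the Stolz--Ces\`{a}ro theorem $u_n/n \to \varepsilon$, i.e. $\sqrt{n}\,\rho_n \to 1/\sqrt{\varepsilon} =: a > 0$, which is the first assertion. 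In particular $\sqrt{n}\,\rho_n$ is bounded above and bounded below by positive constants for all $n \ge 1$ (the finitely many initial terms are positive and the tail converges to $a$), so there are constants $c_1, c_2 > 0$ with $c_1 n^{-1/2} \le \rho_n \le c_2 n^{-1/2}$.

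For the second assertion I would telescope using the recursion:
\[
\rho_n - \rho_{n+k} = \sum_{j=0}^{k-1}\bigl(\rho_{n+j} - \rho_{n+j+1}\bigr) = \varepsilon\sum_{j=0}^{k-1} g(\rho_{n+j})\, \rho_{n+j}^3.
\]
By the two-sided bounds on $g$ and on $\rho_{n+j}$, each summand is comparable to $(n+j)^{-3/2}$; since $0 \le j < k \le n$ we have $n \le n+j < 2n$, so each summand is comparable to $n^{-3/2}$ with constants independent of $n$ and $k$. Summing the $k$ terms gives $\rho_n - \rho_{n+k} \asymp k\, n^{-3/2}$, which is precisely the asserted pair of inequalities.

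The computation is elementary throughout; the only point requiring care -- more bookkeeping than obstacle -- is to make the comparison constants for $\rho_m$ and $g(\rho_m)$ genuinely uniform in $m$ (not merely asymptotic), so that the final estimate holds for all $n$ with a single constant $c$, and to invoke the hypothesis $k \le n$ at the right place to bound $n+j \le 2n$.
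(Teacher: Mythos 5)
Your proof is correct, and it takes a genuinely different and substantially more elementary route than the paper's. For the first assertion, the paper simply cites the standard parabolic fixed-point asymptotics (Milnor, Lemma~10.1); you rederive it from scratch via the substitution $u_n = \rho_n^{-2}$, which linearizes the recursion to $u_{n+1}-u_n = 2\varepsilon g(\rho_n) + O(\rho_n^2) \to \varepsilon$, then Stolz--Ces\`aro. That is self-contained and tracks the constant $a = 1/\sqrt{\varepsilon}$ explicitly. For the second assertion the divergence is more striking: the paper passes to a Fatou coordinate $\psi$ with $\psi(\rho_{n+1})=\psi(\rho_n)+1$ and invokes the Koebe distortion theorem for univalent holomorphic maps to compare $\rho_n-\rho_{n+k}$ with $\rho_{\lfloor n/2\rfloor}-\rho_n$, whereas you just telescope the exact real recursion and use the uniform two-sided bound $\rho_m \asymp m^{-1/2}$ term by term, with $n+j \in [n,2n)$ (from $k\le n$) making each summand comparable to $n^{-3/2}$. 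Your argument avoids any complex-analytic machinery and is arguably the more natural proof here; the paper's Fatou/Koebe approach is more general and would survive perturbations of the recursion where an exact telescoping identity is not as clean, but for this explicit $R$ it is overkill. One small bookkeeping point you already flag correctly: the uniformity of $\sqrt{m}\,\rho_m \asymp 1$ over \emph{all} $m\ge 1$ (not just asymptotically) needs the observation that finitely many initial terms are strictly positive, and that $g$ is bounded away from $0$ and $\infty$ on the compact interval $[0,\rho_0]\subset[0,1)$; both are immediate.
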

\begin{proof} By Lemma~\ref{lem:R}, we have $\rho_n \searrow 0^+$ as $n \to \infty$ and
	\[
	\rho_{n+1} = \rho_n - \frac{\varepsilon}{2} \rho_n^3 + \cdots
	\]
	for $\rho_n$ close to $0$. Hence, the first assertion follows from the standard analysis of the behaviour of an analytic map near a parabolic fixed point, see e.g.~\cite[Lemma~10.1]{Milnor-book}. To check the second one, note that there exists a univalent holomorphic map $\psi\colon V \to \mathbb C$ (Fatou coordinate) on a domain $V \subset \mathbb C$ containing $\rho_n$ for large $n$, such that $\psi(V)$ contains a half-plane $\{z \in \mathbb C: \Re(z) > c_0\}$ for some $c_0 \in \R$ and 
	\[
	\psi(\rho_{n+1}) = \psi(\rho_n) + 1
	\]
	(see e.g.~\cite[Theorem~10.9]{Milnor-book}). Let 
	\[                                                                                                                                                                                                          z_n = \psi(\rho_n)                                                                                                                                                                                                                                                                                                                                                                                                                                                                                                                                                                      \]
	for large $n$ and take $n_0$ with $\Re(z_{n_0}) > c_0$. Then  $\psi^{-1}$ is defined on 
	\[ 
	D = \{z \in \mathbb C: |z- z_{n+k}| < n+k-n_0\}
	\]
	for large $n$, and $z_{\lfloor n/2\rfloor}, z_n \in D'$ for
	\[
	D' = \{z \in \mathbb C: |z- z_{n+k}| \leq n+k-\lfloor n/2\rfloor\}.
	\]
	Since $k \le n$, the ratio of the radius of $D'$ to the radius of $D$ is at most $\frac{(3/2)n+1}{2n-n_0}$, which tends to $3/4$ as $n \to \infty$. Moreover,
	\[
	\frac{|z_{n+k} - z_n|}{|z_n - z_{\lfloor n/2\rfloor}|} = \frac{k}{n - \lfloor n/2\rfloor}
	\]
	Therefore, by the Koebe distortion theorem (see e.g.~\cite[Theorem~1.6]{Carleson-book}),
	\[
	\frac{1}{c} \frac{k}{n}< \frac{\rho_n - \rho_{n+k}}{\rho_{\lfloor n/2\rfloor} - \rho_n} < c \frac{k}{n}
	\]
	for some constant $c > 0$. Since $\sqrt{n}(\rho_{\lfloor n/2\rfloor} - \rho_n) \to \sqrt{2} - 1$ as $n \to \infty$ by the first assertion of the lemma, this ends the proof.
\end{proof}

{\begin{convention}
Within subsequent calculations, we will $a_n \asymp b_n$ for sequences $a_n, b_n$,  if $\frac 1 c < \frac{a_n}{b_n} < c$, where $c > 0$ is independent of $n$.
\end{convention} 

\begin{lem}\label{lem:parab}
	Suppose
	\[
	x_{n+1} = x_n + a x_n^2
	\]
	for $n\in \Z$ and some $a > 0$. Then for given $x_0 < 0$ $($resp.~$x_0 > 0)$ sufficiently close to $0$, we have
	\[
	x_n \asymp -\frac{1}{n} \qquad \Big(\text{resp. } x_{-n} \asymp \frac{1}{n}\Big)
	\]
	for $n \in \N$.
\end{lem}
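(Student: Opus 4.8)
The plan is to reduce the nonlinear recursion to a linear one by the classical substitution $u_n = 1/x_n$, which is the standard device for the prototype parabolic germ $g(x)=x+ax^2=x(1+ax)$.

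First I would pin down the qualitative behaviour of the one-sided orbit, which is what makes the error terms below go to zero. The map $g$ has exactly two fixed points, $0$ and $-1/a$. If $x_0\in(-1/a,0)$ one checks $1+ax_0\in(0,1)$, hence $g(x_0)=x_0(1+ax_0)\in(x_0,0)$; thus $(-1/a,0)$ is forward invariant and $(x_n)_{n\ge 0}$ is increasing and bounded above by $0$, so it converges to a fixed point of $g$, necessarily $0$. Symmetrically, for $x_0>0$ close to $0$ the relevant preimage satisfies $g^{-1}(x)\in(0,x)$, so the backward orbit $(x_{-n})_{n\ge 0}$ decreases to $0$. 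In either case the relevant one-sided orbit tends to $0$, and in particular $x_n\neq 0$ along it, so $u_n=1/x_n$ is well defined.

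Next is the key computation. From $x_{n+1}=x_n(1+ax_n)$ we get $u_{n+1}=u_n/(1+ax_n)$, hence, using $x_nu_n=1$,
\[
u_{n+1}-u_n \;=\; u_n\Bigl(\frac{1}{1+ax_n}-1\Bigr)\;=\;-\frac{a\,x_n u_n}{1+ax_n}\;=\;-\frac{a}{1+ax_n}.
\]
Since the one-sided orbit converges to $0$, the increment $u_{n+1}-u_n$ tends to $-a$; telescoping $u_n=u_0+\sum_{j=0}^{n-1}(u_{j+1}-u_j)$ and applying the Ces\`aro mean theorem yields $u_n/n\to -a$ in the case $x_0<0$, and likewise $u_{-n}/n\to a$ in the case $x_0>0$ (here one telescopes backwards, the increments being $+a/(1+ax_{-m})\to a$). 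Reverting the substitution, $n x_n=n/u_n\to -1/a$ in the first case; as every $x_n$ is negative, $(-nx_n)_{n\ge 1}$ is a sequence of positive reals converging to the positive limit $1/a$, hence bounded away from $0$ and $\infty$, which is exactly $x_n\asymp -1/n$. The case $x_0>0$ is identical with $x_{-n}$ in place of $x_n$, giving $x_{-n}\asymp 1/n$.

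I do not expect any genuine obstacle: the only points needing care are the elementary invariance/monotonicity argument fixing the limit of the one-sided orbit at $0$ (so that $a/(1+ax_n)\to a$) and the harmless remark that a positive sequence with a positive limit is bounded between two positive constants. Alternatively one could simply invoke the general asymptotics for analytic maps near a parabolic fixed point, as in \cite[Lemma~10.1]{Milnor-book} (already used in the proof of Lemma~\ref{lem:R^n}), since $g(x)=x+ax^2$ is precisely the normal form; but the $u_n=1/x_n$ substitution keeps the argument self-contained.
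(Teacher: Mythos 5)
Your proof is correct. The paper disposes of this lemma in one line by citing the asymptotics of orbits near a parabolic fixed point, \cite[Lemma~10.1]{Milnor-book} (the same reference already used for Lemma~\ref{lem:R^n}), whereas you give the standard self-contained derivation via the substitution $u_n=1/x_n$. The computation $u_{n+1}-u_n=-a/(1+ax_n)\to -a$, the Ces\`aro step giving $u_n/n\to -a$, and the preliminary monotonicity argument pinning the one-sided orbit's limit at $0$ (for $x_0\in(-1/a,0)$ forward, and via $g^{-1}(x)\in(0,x)$ backward for $x_0>0$) are all sound; you even obtain the sharper statement $nx_n\to -1/a$ (resp.\ $nx_{-n}\to 1/a$), which is stronger than the two-sided bound $\asymp$ required by the paper's convention. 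Your argument is essentially the proof of Milnor's lemma in the special case of the exact quadratic normal form, where no error terms from higher-order corrections need to be controlled, so it is a legitimate and arguably preferable elementary replacement for the citation; the only hypothesis you should make explicit is that ``sufficiently close to $0$'' means $x_0\in(-1/a,0)$ in the first case, which you do use to get $1+ax_0\in(0,1)$.
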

\begin{proof}
	Follows directly from \cite[Lemma~10.1]{Milnor-book}.
\end{proof}

By Lemmas~\ref{lem:R}--\ref{lem:R^n}, the trajectory of $(x, y)$ approaches the unit circle $S$, spiralling an infinite number of times near $S$ and slowing down near the fixed points $p$ and $q$. In fact, the definitions of the functions $R$, $\Phi$ easily imply that $p$ and $q$ are in the limit set of the trajectory. In particular, for a fixed $\delta > 0$ (which is small enough to satisfy several conditions, specified later), the trajectory visits infinitely number of times the $\delta$-neighbourhoods of $p$ and $q$, defined respectively by
\begin{equation}\label{eq:UAB_def}
\begin{aligned}
U_p &= \{(r\cos\varphi, r\sin\varphi): r \in (1-\delta, 1+\delta),\, \varphi \in (-\delta, \delta)\},\\
U_q &= \{(r\cos\varphi, r\sin\varphi): r \in (1-\delta, 1+\delta),\, \varphi \in (\pi -\delta, \pi + \delta)\}.
\end{aligned}
\end{equation}
Hence, for $i \in \N$ we can define $N_{p, i}$ (resp.~$N_{q, i}$) to be the time spent by the trajectory during its $i$-th visit in $U_p$ (resp.~$U_q$). More precisely, set $n^+_{p,0} = 0$ and define inductively 
\begin{align*}
n^-_{p, i} &= \min\{n \ge n^+_{p, i-1}: (r_n\cos\varphi_n, r_n\sin\varphi_n) \in U_p\},\\
n^+_{p, i} &= \min\{n \ge n^-_{p, i}: (r_n\cos\varphi_n, r_n\sin\varphi_n) \notin U_p\},\\
N_{p, i} &= n^+_{p, i} - n^-_{p, i}
\end{align*}
for $i \geq 1$. Define $n^-_{q, i}$, $n^+_{q, i}$, $N_{q, i}$ analogously. By Lemmas~\ref{lem:R} and~\ref{lem:Phi}, if $\delta>0$ is chosen small enough, then
\begin{equation}\label{eq:n<}
0 < n^-_{p,1} < n^+_{p,1} < n^-_{q,1} < n^+_{q,1} <  \cdots < n^-_{p, i} < n^+_{p, i} < n^-_{q, i} < n^+_{q, i} < \cdots
\end{equation}
or
\[
0 < n^-_{q,1} < n^+_{q,1} < n^-_{p,1} < n^+_{p,1}  < \cdots < n^-_{q, i} < n^+_{q, i} < n^-_{p, i} < n^+_{p, i}  < \cdots,
\]
depending on the position of the point $(x, y)$. To simplify notation, we assume that \eqref{eq:n<} holds. Again by Lemmas~\ref{lem:R} and \ref{lem:Phi}, we obtain the following.

\begin{lem}\label{lem:n-n} We have
\[
n^-_{q, i} - n^+_{p, i}, \; n^-_{p,i+1} - n^+_{q, i} < N_0
\]
for some constant $N_0 > 0$. In other words, the times spent by the trajectory of $(x, y)$ between consecutive visits in $U_p \cup U_q$ remain uniformly bounded.
\end{lem}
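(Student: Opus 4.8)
\emph{Approach.} The plan is to reduce Lemma~\ref{lem:n-n} to a one-dimensional estimate for the angular coordinate $\varphi_n$, exploiting that once $r_n$ has entered the annulus $\{1-\delta<r<1\}$ the orbit lies in $U_p\cup U_q$ precisely when $\varphi_n\bmod 2\pi$ lies in the corresponding angular window. By Lemma~\ref{lem:R} we have $r_n\nearrow 1$, so (taking $\delta<\tfrac12$) there is $n_0\in\N$, depending on the orbit, with $r_n\in(1-\delta,1)$, $\eta(r_n)=1$ and $\rho_n^2<\delta$ for all $n\ge n_0$; for such $n$ the recursion reads $\varphi_{n+1}=\varphi_n+\varepsilon\theta(\varphi_n)+\rho_n^2$, and since every visit of the orbit to $U_p\cup U_q$ occurs at a time $\ge n_0$, every index appearing in the two types of gaps is $\ge n_0$ as well. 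For $n\ge n_0$ one has $(r_n\cos\varphi_n,r_n\sin\varphi_n)\notin U_p\cup U_q$ exactly when $\varphi_n\bmod 2\pi$ belongs to the compact set $K=[\delta,\pi-\delta]\cup[\pi+\delta,2\pi-\delta]$; on $K$, as $\theta$ is continuous and positive off $\pi\Z$, we have $\theta\ge\theta_{\min}:=\min_{[\delta,\pi-\delta]}\theta>0$, the minimum over the two components of $K$ coinciding by $\pi$-periodicity, and $\theta_{\min}$ depending only on $\theta$ and $\delta$.

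\emph{Key step.} Next I would rule out that the orbit jumps across $U_p$ or $U_q$ when $n\ge n_0$. Such a jump would require $\varphi_{n+1}-\varphi_n>2\delta$ while $\varphi_n\bmod 2\pi$ is close to a zero of $\theta$, that is, to $0$ or $\pi$; but there $\theta(\varphi_n)=O(\delta^2)$, since $\theta$ vanishes to second order at points of $\pi\Z$, so $\varphi_{n+1}-\varphi_n=\varepsilon\theta(\varphi_n)+\rho_n^2=O(\varepsilon\delta^2)+\rho_n^2<2\delta$ once $\varepsilon\delta$ is small and $n_0$ is chosen so that $\rho_{n_0}^2<\delta$. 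It follows that the orbit cannot make a full turn around the annulus between two consecutive visits to $U_p\cup U_q$, because a full turn would carry $\varphi_n\bmod 2\pi$ through one of the two angular windows, hence the orbit into $U_p$ or $U_q$. Therefore the total increase of $\varphi_n$ over any such gap is less than $2\pi$ (in fact less than $\pi$).

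\emph{Conclusion.} On a gap of length $n^-_{q,i}-n^+_{p,i}$, every index is $\ge n_0$ and the orbit stays outside $U_p\cup U_q$, so $\varphi_n\bmod 2\pi\in K$ and, by Lemma~\ref{lem:Phi} and $\theta\ge 0$, $\varphi_{n+1}-\varphi_n\ge\varepsilon\theta(\varphi_n)\ge\varepsilon\theta_{\min}$. Summing over the gap,
\[ 2\pi>\varphi_{n^-_{q,i}}-\varphi_{n^+_{p,i}}\ge(n^-_{q,i}-n^+_{p,i})\,\varepsilon\theta_{\min}, \]
so $n^-_{q,i}-n^+_{p,i}<2\pi/(\varepsilon\theta_{\min})$; the same argument with $\varphi_n\bmod 2\pi$ confined to $[\pi+\delta,2\pi-\delta]$ (and the same $\theta_{\min}$) bounds $n^-_{p,i+1}-n^+_{q,i}$. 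Hence $N_0:=\lceil 2\pi/(\varepsilon\theta_{\min})\rceil$ works, and it does not even depend on the initial point $(x,y)$.

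\emph{Main obstacle.} The only genuinely delicate point is the key step: excluding that the orbit skips over $U_p$ or $U_q$, which is exactly what secures the ``at most one full turn per gap'' bound. This is where the quantitative balance between the smallness of $\theta$ near its zeros, the size $\delta$ of the neighbourhoods, and the decay $\rho_n\to 0$ is used; everything else is an immediate consequence of the monotonicity and positivity in Lemma~\ref{lem:Phi} and of $r_n\to 1$ in Lemma~\ref{lem:R}. Alternatively, this step can be bypassed by invoking the alternation \eqref{eq:n<} --- already established from Lemmas~\ref{lem:R} and~\ref{lem:Phi} --- which confines $\varphi_n\bmod 2\pi$ to a single arc of $K$ throughout each gap.
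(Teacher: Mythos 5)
Your argument is correct, and it supplies precisely the details the paper leaves implicit: Lemma~\ref{lem:n-n} is stated there without proof beyond the sentence ``Again by Lemmas~\ref{lem:R} and \ref{lem:Phi}, we obtain the following'', and the mechanism you use --- the uniform drift $\varphi_{n+1}-\varphi_n\ge\varepsilon\theta(\varphi_n)\ge\varepsilon\theta_{\min}$ while $\varphi_n\bmod 2\pi$ lies in $K$, together with a total angular advance of less than $2\pi$ per gap --- is clearly the intended one. Two points to tighten. First, in your key step the bound $\theta(\varphi_n)=O(\delta^2)$ is only valid when $\varphi_n\bmod2\pi$ is within $O(\delta)$ of a zero of $\theta$, which the jump condition alone does not force; the clean version is a two-case argument: if $u:=\dist(\varphi_n\bmod2\pi,\pi\Z)\ge\pi/8$, say, the increment required to clear the window is at least $u+\delta\ge\pi/8$, which exceeds the maximal possible increment $\varepsilon\max\theta+\rho_n^2$ once $\varepsilon$ is small and $n\ge n_0$, while if $u<\pi/8$ the quadratic vanishing gives increment $\varepsilon u^2+\rho_n^2<u+\delta$. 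Second, the proposed shortcut via \eqref{eq:n<} does not actually bypass the key step: the alternation excludes the orbit \emph{landing} in the wrong window during a gap, but not \emph{jumping over} it --- a jump over the $U_q$-window followed by a jump over the $U_p$-window is consistent with \eqref{eq:n<} and would add $2\pi$ to the angular advance --- so the no-jump estimate is needed in any case. With the first point repaired and the second remark discarded, the proof is complete, and the resulting $N_0$ is indeed uniform in the initial point, as you note.
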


Now we estimate the times spent by the trajectory during its stay in $U_p$ and $U_q$.

\begin{lem}\label{lem:N_i}
	\[
	N_{p, i} \asymp N_{q, i} \asymp i.
	\]
\end{lem}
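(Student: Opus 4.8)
The plan is to pin down the asymptotics of a single visit to $U_p$ or $U_q$, namely to prove that for large $i$,
\[
N_{p,i}\asymp\frac{1}{\rho_{n^-_{p,i}}}\asymp\sqrt{n^-_{p,i}},\qquad N_{q,i}\asymp\frac{1}{\rho_{n^-_{q,i}}}\asymp\sqrt{n^-_{q,i}},
\]
where the second comparison in each line is Lemma~\ref{lem:R^n}, and then to bootstrap this into $n^-_{p,i}\asymp i^2$. Throughout, I restrict to $i$ large, the finitely many remaining visits being absorbed into the constants since each $N_{p,i}$ is a positive integer. For large $i$ the radius $r_n$ is so close to $1$ on the whole $i$-th visit that the radial condition defining $U_p$ is automatic and $\eta(r_n)\equiv 1$; writing $m=n^-_{p,i}$, $m'=n^+_{p,i}$ and choosing on this visit the angular representative $\varphi_n\in(-\delta,\delta)$ (possible because, by Lemma~\ref{lem:Phi}, $\varphi_n$ increases with increments bounded by $\varepsilon\delta^2+\rho_n^2\ll\delta$, so it cannot wind around), the dynamics reduces to the one-dimensional iteration
\[
\varphi_{n+1}=\varphi_n+\varepsilon\varphi_n^2+\rho_n^2,\qquad m\le n<m',
\]
with $\varphi_m$ just above $-\delta$ and $\varphi_{m'}$ just above $\delta$, whence $\varphi_{m'}-\varphi_m\asymp\delta$ (for $\varepsilon$ small enough that a suitable $\delta$ exists). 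The visit to $U_q$ is identical after the shift $\varphi_n\mapsto\varphi_n-\pi$, using the $\pi$-periodicity of $\theta$.

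I then record an a priori bound: if $m'-m>m$, then all $n\in[m,2m)$ belong to the visit, and by Lemma~\ref{lem:R^n} $\rho_n\asymp 1/\sqrt n$ there, so $\sum_{n=m}^{2m-1}\rho_n^2\asymp 1$; but this sum is at most $\varphi_{m'}-\varphi_m\asymp\delta$, contradicting the smallness of $\delta$. Hence $N_{p,i}=m'-m\le n^-_{p,i}$, and consequently $\rho_n\asymp\rho_m$ for every $n$ in the visit. Writing $N_{p,i}=\sum_{n=m}^{m'-1}\frac{\varphi_{n+1}-\varphi_n}{\varepsilon\varphi_n^2+\rho_n^2}$ and using $\rho_n\asymp\rho_m$ together with the smallness of the increments (which keeps $\varepsilon\varphi_n^2+\rho_n^2\gtrsim\varepsilon\varphi^2+\rho_m^2$ for $\varphi\in[\varphi_n,\varphi_{n+1}]$) yields
\[
N_{p,i}\lesssim\int_{-\infty}^{\infty}\frac{d\varphi}{\varepsilon\varphi^2+\rho_m^2}=\frac{\pi}{\sqrt\varepsilon\,\rho_m}\asymp\sqrt{n^-_{p,i}}.
\]

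For the matching lower bound it suffices to count the iterates with $\varphi_n\in[0,\rho_m]$: at the step just before $\varphi$ crosses $0$ one has $|\varphi_n|\lesssim\rho_m$ (otherwise $\varepsilon\varphi_n^2$ alone would overshoot), so the window $[0,\rho_m]$ cannot be skipped; inside it every increment is at most $(1+\varepsilon)\rho_m^2$ while the $\varphi$-progress is $\gtrsim\rho_m$, whence $N_{p,i}\ge c/\rho_m\asymp\sqrt{n^-_{p,i}}$. (Alternatively, split the visit at $\varphi=0$ and compare the iteration with the parabolic maps $t\mapsto t+\varepsilon t^2$ and $t\mapsto t+(1+\varepsilon)t^2$, invoking Lemma~\ref{lem:parab}, which gives both bounds at once.) The same reasoning applies verbatim to $U_q$, so $N_{q,i}\asymp 1/\rho_{n^-_{q,i}}\asymp\sqrt{n^-_{q,i}}$.

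Finally I bootstrap. By Lemma~\ref{lem:n-n}, $n^-_{q,i}=n^-_{p,i}+N_{p,i}+O(1)$ and $n^-_{p,i+1}=n^-_{q,i}+N_{q,i}+O(1)$; since $N_{p,i}=o(n^-_{p,i})$ this forces $n^-_{q,i}\asymp n^-_{p,i}$, hence $N_{q,i}\asymp\sqrt{n^-_{p,i}}$ as well, so that, with $s_i:=n^-_{p,i}$,
\[
s_{i+1}-s_i=N_{p,i}+N_{q,i}+O(1)\asymp\sqrt{s_i}.
\]
Using $\sqrt{s_{i+1}}-\sqrt{s_i}=(s_{i+1}-s_i)/(\sqrt{s_{i+1}}+\sqrt{s_i})$ together with $\sqrt{s_{i+1}}\asymp\sqrt{s_i}$ gives $\sqrt{s_{i+1}}-\sqrt{s_i}\asymp 1$, so $\sqrt{s_i}\asymp i$, i.e. $n^-_{p,i}\asymp i^2$, and therefore $N_{p,i}\asymp N_{q,i}\asymp\sqrt{n^-_{p,i}}\asymp i$. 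The main obstacle is precisely the upper bound $N_{p,i}\lesssim 1/\rho_m$, as opposed to the crude $N_{p,i}\lesssim n^-_{p,i}$: near $\varphi=0$ the per-step progress is only of order $\rho_m^2$, so one must show quantitatively that this slow region is left within $\asymp 1/\rho_m$ steps, and both routes to this estimate (integral comparison or comparison with a parabolic map via Lemma~\ref{lem:parab}) depend on the a priori control $\rho_n\asymp\rho_m$ established in the second step.
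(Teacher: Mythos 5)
Your proof is correct in substance but organized quite differently from the paper's. The paper runs an induction on $i$: assuming $j/C\le N_{p,j},N_{q,j}\le Cj$ for $j<i$, it deduces $n^-_{p,i}\asymp_C i^2$ and hence two-sided bounds $\rho^-_i\le\rho_n\le\rho^+_i$ on the visit, then sandwiches $\varphi_n$ between comparison sequences $\varphi^\pm_n$ driven by the \emph{frozen} perturbations $(\rho^\pm_i)^2$, whose crossing times are computed to be $\asymp 1/\rho^\pm_i$ by splitting at $|\varphi|=\rho^\pm_i/\sqrt{\varepsilon}$ and invoking Lemma~\ref{lem:parab} in the outer regimes; the induction closes because $C^{2/3}\ll C$. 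You instead decouple the two difficulties: you first prove the single-visit law $N_{p,i}\asymp 1/\rho_{n^-_{p,i}}$ unconditionally, using an a priori bound on the visit length to get $\rho_n\asymp\rho_{n^-_{p,i}}$ throughout, and then solve the recursion $s_{i+1}-s_i\asymp\sqrt{s_i}$ to obtain $n^-_{p,i}\asymp i^2$. The core mechanism is the same in both arguments (progress $\asymp\rho^2$ per step near $\varphi=0$, parabolic dynamics elsewhere, so time $\asymp 1/\rho$ per visit), but your route avoids the bookkeeping with the constant $C$ and the monotone-comparison induction for $\varphi^\pm_n$, at the cost of needing the a priori control in the first step; the paper's induction buys that control for free from the inductive hypothesis.

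One small repair is needed in your a priori bound. You derive a contradiction from $m'-m>m$ via $\sum_{n=m}^{2m-1}\rho_n^2\asymp 1$ versus $\varphi_{m'}-\varphi_m\le 2\delta$; but the implicit constant here is $a^2\log 2$ with $a$ from Lemma~\ref{lem:R^n} depending on the initial point, while the statement must hold for \emph{every} $\delta>0$, so the inequality $a^2\log 2>2\delta$ need not hold. Replace the threshold $m$ by $Km$ with $K$ chosen so that $\tfrac{a^2}{2}\log(K+1)>2\delta$: this yields $N_{p,i}\le K\,n^-_{p,i}$, which is all you use afterwards, since Lemma~\ref{lem:R^n} still gives $\rho_n\asymp\rho_{n^-_{p,i}}$ on $[n^-_{p,i},(K+1)n^-_{p,i}]$. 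With this adjustment the rest of your argument (the integral upper bound, the counting lower bound in the window $[0,\rho_m]$, and the bootstrap via $\sqrt{s_{i+1}}-\sqrt{s_i}\asymp 1$) goes through.
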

\begin{proof}
	We prove the lemma by induction. Obviously, we can assume that $i$ is large. Suppose, by induction, 
		\begin{equation}\label{eq:ind-N}
		\frac{j}{C} \le N_{p, j} \le C j, \quad  \frac{j}{C}  \le N_{q, j} \le C j \qquad \text{for } j = 1, \ldots, i-1
	\end{equation}
	for a large constant $C > 1$ (to be specified later). First, we estimate $N_{p, i}$. By Lemma~\ref{lem:n-n},
	\begin{equation}\label{eq:S_n}
	\frac{i^2}{c_1C}  \leq n^-_{p, i} \leq c_1 C i^2 
	\end{equation}
	for some $c_1 > 0$ (we denote by $c_1, c_2, \ldots$ constants independent of $C$.) Obviously, we can assume $\varphi_{n^-_{p, i}} \in [-\pi, \pi)$. Then, since $\delta$ is small and $i$ is large, we have 
	\[
	-\frac{\pi}{4} < -\delta < \varphi_{n^-_{p, i}} < 0.
	\]
	Note that $\rho_{n^-_{p, i}} < \delta$ and the sequence $\rho_n$ is decreasing, so
	\[
	N_{p, i} = \min\{n \ge n^-_{p, i}: \varphi_n \ge  \delta\} - n^-_{p, i}.
	\]
	Recall that if $\varphi_n \in (-\pi/4, \pi/4)$ (in particular, if $n \in [n^-_{p, i}, n^+_{p, i})$), then
	\begin{equation}\label{eq:ind}
	\varphi_{n+1} = \varphi_n + \varepsilon \varphi_n^2 + \rho_n^2.
	\end{equation}
	
	Let
	\[
	\rho^-_i = \frac{1}{C^{2/3}i}, \qquad 
	\rho^+_i = \frac{C^{2/3}}{i}.
	\]
	To estimate the behaviour of the sequence $\varphi_n$ for $n \ge n^-_{p, i}$, we will compare it with the sequences $\varphi^+_n$, $\varphi^-_n$ for $n \ge n^-_{p, i}$, given by 
	\begin{equation}\label{eq:tildeind}
	\varphi^\pm_{n^-_{p, i}} = \varphi_{n^-_{p, i}}, \qquad \varphi^\pm_{n+1} = \varphi^\pm_n + \varepsilon (\varphi^\pm_n)^2 + (\rho^\pm_i)^2.
	\end{equation}
	First, we will analyse the behaviour of the sequences $\varphi^\pm_n$ and then show that they provide upper and lower bounds for $\varphi_n$. 
	By definition, $\varphi_{n^-_{p, i}}^\pm \in (-\delta, 0)$ and $\varphi_n^\pm$ increases to infinity as $n \to \infty$. Hence, we can define 
	\[
	N^\pm_i = \min\{n \ge n^-_{p, i+1}: \varphi^\pm_n \ge \delta\} - n^-_{p, i}.
	\]
	to be the time which the sequence $\varphi^\pm_n$ spends in $(-\delta, \delta)$. Since $\rho^-_i < \rho^+_i$, we have $\varphi^-_n \le \varphi^+_n$ and $N^+ \le N^-$. 
	Set
	\begin{align*}
	k^\pm_1 &= \min\left\{n \in [n^-_{p, i}, n^-_{p, i} + N^\pm_i] : \varphi^\pm_n > - \frac{\rho^\pm_i}{\sqrt{\varepsilon}}\right\},\\
	k^\pm_2 &= \min\left\{n \in [k^\pm_1, n^-_{p, i} + N^\pm_i] : \varphi^\pm_n >  \frac{\rho^\pm_i}{\sqrt{\varepsilon}}\right\}.
	\end{align*}
	Note that for $n \in [n^-_{p, i}, k^\pm_1) \cup [k^\pm_2, N^\pm_i + n^-_{p, i})$ we have $\varepsilon(\varphi^\pm_n)^2 \ge (\rho^\pm_i)^2$, so
	\[
	\varphi^\pm_n + \varepsilon(\varphi^\pm_n)^2 \le \varphi^\pm_{n+1} \le  \varphi^\pm_n + 2\varepsilon (\varphi^\pm_n)^2.
	\]
	Hence, by Lemma~\ref{lem:parab}, 
	\[
	k^\pm_1 - n^-_{p, i} \asymp N^\pm_i + n^-_{p, i} - k^\pm_2 \asymp \frac{1}{\rho^\pm_i}.
	\]
	On the other hand, for $n \in [k^\pm_1, k^\pm_2)$ we have $\varepsilon(\varphi^\pm_n)^2 \le (\rho^\pm_i)^2$, so
	\[
	\varphi^\pm_n + (\rho^\pm_i)^2 \le \varphi^\pm_{n+1} \le  \varphi^\pm_n + 2 (\rho^\pm_i)^2,
	\]
	which implies
	\[
	k^\pm_2 - k^\pm_1 \asymp \frac{1}{\rho^\pm_i}.
	\]
	Hence,
	\[
	\frac{i}{c_2 C^{2/3}} = \frac{1}{c_2\rho^+_i} \le N^+_i \le N^-_i \leq  \frac{c_2}{\rho^-_i} = c_2 C^{2/3} i
	\]
	for some $c_2 > 0$. If $C$ is chosen sufficiently large, then this yields
	\begin{equation}\label{eq:N_pm}
	\frac{i}{C} \le N^+_i \le N^-_i \leq C i.
	\end{equation}
	
	Now we show by induction that
	\begin{equation}\label{eq:phipm}
	\varphi^-_n \le \varphi_n \le \varphi^+_n 
	\end{equation}
	for $n \in [n^-_{p, i}, n^-_{p, i} + \min(N_{p, i}, N^-_i)]$. 
	To do it, note that for $n = n^-_{p, i}$ we have equalities in \eqref{eq:phipm}. 
	Suppose, by induction, that \eqref{eq:phipm} is satisfied for some $n \in [n^-_{p, i}, n^-_{p, i} + \min(N_{p, i}, N^-_i))$. Then
	by \eqref{eq:ind} and \eqref{eq:tildeind},
	\[
	\varphi_{n+1} - \varphi^\pm_{n+1} = (\varphi_n - \varphi^\pm_n)(1 + \varepsilon (\varphi_n + \varphi^\pm_n)) +  \rho_n^2 - (\rho^\pm_i)^2,
	\]
	where $1 + \varepsilon (\varphi_n + \varphi^\pm_n) >1 -2\varepsilon\delta > 0$. Moreover, by Lemma~\ref{lem:R^n}, \eqref{eq:S_n} and \eqref{eq:N_pm}, there exists a constant $c_3 > 0$, such that
	\[
	\frac{1}{c_3 \sqrt{C} \, i} \le  \rho_n \le \frac{c_3 \sqrt{C}}{i},
	\]
	which gives
	\[
	\rho^-_i \le  \rho_n \le \rho^+_i,
	\]
	provided $C$ is chosen sufficiently large. Therefore, the sign of $\varphi_{n+1} - \varphi^\pm_{n+1}$ is the same as the one of $\varphi_n - \varphi^\pm_n$, which provides the induction step and proves \eqref{eq:phipm}.
	
	Using \eqref{eq:phipm}, we can show 
	\begin{equation}\label{eq:N<N<N}
	N^+ _i\le N_{p, i} \le N^-_i.
	\end{equation}
 Indeed, if  $N_{p, i} > N_i^-$, then by \eqref{eq:phipm},
	\[
	\delta \le \varphi^-_{n^-_{p, i} + N^-_i} \le \varphi_{n^-_{p, i} + N^-_i},
	\]
	so $n^+_{p, i} \le n^-_{p, i} + N^-_i$, which is a contradiction. Hence, $N_{p, i} \le N_i^-$, and then \eqref{eq:phipm} gives
	\[
	\delta \le \varphi_{n^+_{p, i}} \le \varphi^+_{n^+_{p, i}},
	\]
	which implies \eqref{eq:N<N<N}. By \eqref{eq:N_pm} and \eqref{eq:N<N<N},
	\[
	\frac{i}{C} \le N_{p, i} \leq C i,
	\]
	which completes the inductive step started in \eqref{eq:ind-N} and shows $N_{p, i} \asymp i$. 
	
	To show $N_{q, i} \asymp i$, note that if $\varphi_n \in (3\pi/4 , 5\pi/4)$, then for $\tilde \varphi_n = \varphi_n - \pi$ we have
	\[
	\tilde \varphi_{n+1} = \tilde \varphi_n + \varepsilon \tilde \varphi_n^2 + \rho_n^2.
	\]
	Moreover, by the proved assertion $N_{p, i} \asymp i$ and Lemmas~\ref{lem:R^n} and~\ref{lem:n-n}, we have $n^-_{q, i} \asymp n^-_{p, i}$ and $\rho_{n^-_{q, i}}\asymp\rho_{n^-_{p, i}}$. Using this, one can show $N_{q, i}\asymp i$ by repeating the proof in the case of $N_{p, i}$.
\end{proof}

A more accurate comparison of $N_{p, i}$ and $N_{q, i}$ is presented below.

\begin{lem}\label{lem:N_i-N_i} There exists $M > 0$ such that
	\[
	|N_{p, i}-N_{q, i}| < M
	\]
	for all $i \geq 1$.
\end{lem}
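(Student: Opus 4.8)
The plan is to run the two angular recursions governing the $i$-th visit to $U_p$ and the $i$-th visit to $U_q$ side by side and to compare them step by step. First I would reindex the $q$-visit by $m=n+d$ with $d=n^-_{q,i}-n^-_{p,i}$; by Lemmas~\ref{lem:n-n} and~\ref{lem:N_i} one has $d=N_{p,i}+O(N_0)\asymp i$, while $n^-_{p,i}\asymp i^2$ (cf.~\eqref{eq:S_n}), so that every time index occurring in either visit is $\asymp i^2$ and all of them lie in an interval of length $O(i)$. Since $\theta(\varphi)=\varphi^2$ near $0$ and $\theta$ is $\pi$-periodic, both recursions have the identical shape $u_{n+1}=u_n+\varepsilon u_n^2+(\text{forcing})_n$, namely \eqref{eq:ind} for $\varphi_n$ and its counterpart for $\tilde\varphi_m=\varphi_m-\pi$, with forcing terms $\rho_n^2$ and $\rho_{n+d}^2$ respectively. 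Setting $\rho:=\rho_{n^-_{p,i}}$, Lemma~\ref{lem:R^n} applied with $k=d$ together with $\rho_n=(a+o(1))/\sqrt n$ gives $\rho_n\asymp\rho\asymp 1/i$ over the whole relevant range, and also
\[
\bigl|\rho_n^2-\rho_{n+d}^2\bigr|=(\rho_n-\rho_{n+d})(\rho_n+\rho_{n+d})\lesssim\frac{d}{n^{3/2}}\cdot\frac{1}{\sqrt n}\asymp\frac{1}{i^3}.
\]
Finally, since the step size near $-\delta$ is at most $\varepsilon\delta^2+\rho_n^2$, the two starting angles $\varphi_{n^-_{p,i}},\tilde\varphi_{n^-_{q,i}}$ both lie in $(-\delta,-\delta+O(\delta^2))$, hence differ by $O(\delta^2)$.

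Next I would subtract the two recursions. Writing $e_n=\varphi_{n^-_{p,i}+n}-\tilde\varphi_{n^-_{q,i}+n}$ for $0\le n\le\min(N_{p,i},N_{q,i})$, one gets $e_{n+1}=e_n\lambda_n+\epsilon_n$ with $\lambda_n=1+\varepsilon(\varphi_{n^-_{p,i}+n}+\tilde\varphi_{n^-_{q,i}+n})$, $|\epsilon_n|\lesssim i^{-3}$ and $|e_0|=O(\delta^2)$, so that
\[
e_n=e_0\prod_{j=0}^{n-1}\lambda_j+\sum_{l=0}^{n-1}\epsilon_l\prod_{j=l+1}^{n-1}\lambda_j.
\]
The decisive point to establish is that the amplification factors $\prod_{j=l+1}^{n-1}\lambda_j$ stay \emph{bounded} (uniformly in $i$), rather than growing like a power of $i$: for $\varphi<0$ the recursion is sub-parabolically contracting ($\lambda_j<1$) and for $\varphi>0$ expanding ($\lambda_j>1$), and because $\theta(\varphi)=\varphi^2$ is even, these effects cancel over a full passage through the fixed point. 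Quantitatively, $\sum_j\log\lambda_j\approx 2\varepsilon\sum_j\varphi_j$ is a Riemann sum for $2\varepsilon\int\varphi(\varepsilon\varphi^2+\rho^2)^{-1}\,d\varphi$ with error $O(\varepsilon\delta)$ (checked via Lemma~\ref{lem:R^n}), which yields $\prod_{j=l+1}^{n-1}\lambda_j\asymp(\varepsilon\varphi_n^2+\rho^2)/(\varepsilon\varphi_l^2+\rho^2)$; and since the recursion spends $\asymp 1/(\sqrt\varepsilon\,\rho)\asymp i$ steps within distance $\rho/\sqrt\varepsilon$ of $0$, one has $\sum_l(\varepsilon\varphi_l^2+\rho^2)^{-1}\asymp(\sqrt\varepsilon\,\rho^3)^{-1}\asymp i^3$. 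Plugging these in, together with $\varphi_0\approx-\delta$, I expect
\[
|e_n|\lesssim\delta^2\cdot\frac{\varepsilon\varphi_n^2+\rho^2}{\varepsilon\delta^2}+i^{-3}(\varepsilon\varphi_n^2+\rho^2)(\sqrt\varepsilon\,\rho^3)^{-1}\lesssim\delta^2
\]
at the end of the visit, uniformly in $i$. Since $\lambda_j$ itself involves $e_j$, this estimate will have to be closed by a bootstrap: run the computation under the a priori hypothesis $|e_j|\le\delta^2$ on an initial segment, observe that then $\varepsilon\sum_j|e_j|=O(\delta)$ there (so the extra factor $e^{O(\varepsilon\sum_j|e_j|)}$ picked up when replacing $\lambda_j$ by $1+2\varepsilon\varphi_j$ is $O(1)$), re-derive the bound with a strictly smaller constant, and conclude by induction.

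Once $|e_n|=O(\delta^2)$ is known at $n=\min(N_{p,i},N_{q,i})$, the claim follows at once: assuming say $N_{p,i}\le N_{q,i}$, at time $n^+_{p,i}$ the sequence $\varphi$ has just exceeded $\delta$, so $\tilde\varphi\ge\delta-O(\delta^2)>\delta/2$, and since the increments there are $\ge\varepsilon(\delta/2)^2$, the sequence $\tilde\varphi$ exceeds $\delta$ within $O(1/(\varepsilon\delta))$ further steps; hence $N_{q,i}-N_{p,i}=O(1/(\varepsilon\delta))$, a bound independent of $i$, which together with the trivial finiteness of $|N_{p,i}-N_{q,i}|$ for the finitely many remaining small $i$ yields the constant $M$. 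The hard part will be the middle step: the non-autonomous parabolic recursion is weakly expanding over roughly half of each visit, so a crude Gronwall-type bound on $\prod_j\lambda_j$ blows up polynomially in $i$; one must use the precise form $\theta(\varphi)=\varphi^2$ to extract the contraction/expansion cancellation that keeps the amplification bounded, and then handle the resulting implicit estimate by the bootstrap described above.
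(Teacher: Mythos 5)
Your strategy is genuinely different from the paper's. You run the two angular recursions side by side after reindexing, subtract them, and try to control the error $e_n$ directly via a variation-of-constants formula, arguing that the parabolic contraction (for $\varphi<0$) and expansion (for $\varphi>0$) cancel so that amplification stays controlled. The paper instead exploits a monotonicity trick in each direction separately: because the $p$-visit starts earlier, $\rho$ (hence the forcing $\sigma_n^2$) is larger there, and starting from $\tilde\psi_0\le\psi_2\le\tilde\psi_4$ one preserves the sign of $\psi_{n+2}-\tilde\psi_n$ by a one-line induction, giving $N_{p,i}\le N_{q,i}+2$ immediately; the converse direction is handled by a separate additive induction (bounding $\psi_n-\tilde\psi_{n+2}$ by $\lesssim n/i^3$ up to the crossover time $k$) combined with a head-start argument after $M$ extra steps. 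The paper's route avoids Riemann-sum approximations and products of amplification factors entirely, at the cost of a case split; yours would, if completed, give an essentially explicit two-sided estimate in one pass.

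There is, however, a genuine gap in your bootstrap. You propose the a priori hypothesis $|e_j|\le\delta^2$ on an initial segment and then claim that this yields $\varepsilon\sum_j|e_j|=O(\delta)$. This is false as stated: the visit length is $\asymp i$, so $\varepsilon\sum_{j}|e_j|\le\varepsilon N_{p,i}\,\delta^2\asymp\varepsilon\delta^2\,i$, which diverges with $i$. The control you actually need is the $j$-dependent (shape) bound $|e_j|\lesssim(\varepsilon\varphi_j^2+\rho^2)/\varepsilon$, which is small in the middle of the visit; that bound does give $\varepsilon\sum_j|e_j|\approx\varepsilon\sum_j(\varepsilon\varphi_j^2+\rho^2)/\varepsilon\approx\int d\varphi\asymp\delta$, and is the correct quantity to propagate inductively (you essentially re-derive it in your final display, but you never carry it as the induction hypothesis). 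Without replacing the flat hypothesis $|e_j|\le\delta^2$ by the refined shape bound, the replacement error $\prod_j\lambda_j\to\prod_j(1+2\varepsilon\varphi_j)$ is not controlled and the argument does not close. With that correction (and some care in the Riemann-sum step to justify replacing the slowly varying $\rho_n$ by a single $\rho\asymp1/i$, which works since Lemma~\ref{lem:R^n} gives relative change $O(1/i)$ over one visit), your plan should go through; the endgame (from $|e|=O(\delta^2)$ at the end of the shorter visit to $|N_{p,i}-N_{q,i}|=O(1/(\varepsilon\delta))$) is correct as written.
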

\begin{proof} Take a large $i \in \N$. Let
	\[
	(\eta_n, \psi_n) = f^n(r_{n_{p, i}^-}, \varphi_{n_{p, i}^-}), \qquad (\tilde\eta_n, \tilde \psi_n) = f^n(r_{n_{q, i}^-}, \varphi_{n_{q, i}^-} - \pi) 
	\]
	and
	\[
	\sigma_n = 1 -\eta_n = \rho_{n+n_{p, i}^-}, \qquad \tilde \sigma_n = 1 -\tilde \eta_n  = \rho_{n+n_{q, i}^-}
	\]
	for $n \ge 0$. Subtracting multiplicities of $2\pi$, we can assume $\psi_0, \tilde \psi_0 \in [-\pi, \pi)$, so in fact
	\[
	-\delta < \psi_0, \tilde \psi_0 < 0.
	\]
	By definition,
	\begin{equation}\label{eq:ind2}
	\psi_{n+1} = \psi_n + \varepsilon\psi_n^2 + \sigma_n^2, \qquad  \tilde \psi_{n+1} =  \tilde \psi_n +  \varepsilon\tilde \psi_n^2 +  \tilde \sigma_n^2
	\end{equation}
	as long as $\psi_n, \tilde\psi_n  < \pi/4$. It follows that
	\[
	N_{p, i} = \min\{n \ge 0: \psi_n \ge \delta\}, \qquad N_{q, i} = \min\{n \ge 0: \tilde\psi_n \ge \delta\}.
	\] 
	Note that \eqref{eq:ind2} holds for $n \le \min(N_{p, i}, N_{q, i})+1$. To prove the lemma, we will carefully compare the behaviour of the sequences $\psi_n$ and $\tilde\psi_n$. First, note that 
	\begin{equation}\label{eq:psi_0}
	\tilde\psi_0 \le \psi_2 \le \tilde\psi_4
	\end{equation}
	provided $i$ is sufficiently large (because then $\sigma_n, \tilde\sigma_n$ are small compared to $\eps$ and $\delta$). Note also that since $\rho_n$ is decreasing, we have 
	\begin{equation}\label{eq:sigma>}
	\sigma_{n+2} > \tilde \sigma_n
	\end{equation}
	for every $n \ge 0$. By \eqref{eq:ind2},
	\[
	\psi_{n+3} - \tilde \psi_{n+1} = (\psi_{n+2} - \tilde \psi_n) (1 + \varepsilon(\psi_{n+2} + \tilde \psi_n)) + \sigma_{n+2}^2 - \tilde \sigma_n^2
	\]
	for $n \le \min(N_{p, i}-2, N_{q, i})$, where $\varepsilon(\psi_{n+2} + \tilde \psi_n) < \varepsilon \pi/2 < 1$. Hence, by induction, using \eqref{eq:psi_0} and \eqref{eq:sigma>}, we obtain
	\begin{equation}\label{eq:psi>tilde}
	\psi_{n+2} \ge \tilde \psi_n
	\end{equation}
	for $n \in [0, \min(N_{p, i}-2, N_{q, i})+1]$. In particular,
	\[
	N_{p, i} < N_{q, i}+2 \qquad \text{or} \qquad \psi_{N_{q, i}+2} > \tilde \psi_{N_{q, i}} \ge \delta,
	\]
	which gives
	\begin{equation}\label{eq:N<}
	N_{p, i} \leq N_{q, i} + 2.
	\end{equation}
	
	The proof of the opposite estimate is more involved, so let us first present its sketch. We fix a number $k$ such that (roughly speaking) $\psi_k \approx 1/i$. Then we show inductively $\tilde \psi_{n+2} \ge \psi_n - cn/i^3$ for $n \le k$ and some constant $c >0$ (see~\eqref{eq:ind_psi}). This gives $\tilde \psi_{k+2} \ge \psi_k - c'/i^2$ for some $c' > 0$ (see \eqref{eq:c_3}). By the definition of $k$, we check that for sufficiently large constant $M > 0$ we have $\tilde \psi_{k+M} \ge \psi_k + c''M/i^2$ for some $c'' > 0$. With this starting condition, we inductively show $\tilde \psi_{n+M} \ge \psi_n + c''M/i^2$ for $n \in [k, N_{p, i}]$ (see~\eqref{eq:ind_psi2}). This provides $\tilde \psi_{N_{p, i}+M} \ge \psi_{N_{p, i}} \ge \delta$, so $N_{q, i} \le N_{p, i} + M$. 
	
	Now let us go into the details of the proof. By Lemmas~\ref{lem:n-n} and~\ref{lem:N_i}, we have 
	\begin{equation}\label{eq:Nasymp}
	n_{p, i}^- \asymp n_{q, i}^- \asymp i^2, \qquad N_{p, i} \asymp N_{q, i} \asymp i,
	\end{equation}
	so by Lemma~\ref{lem:R^n},
	\begin{equation}\label{eq:sigma_n}
	\sigma_n \le \frac{c_1}{i}, \qquad \sigma_n^2 - \tilde \sigma_{n+2}^2 = (\sigma_n + \tilde \sigma_{n+2})(\sigma_n - \tilde \sigma_{n+2})  \le
	\frac{c_1}{i^3}
	\end{equation}
	for $n \in [0, N_{q, i}+4]$ and a constant $c_1 > 0$. Let 
	\[
	k = \max \left\{n \in [2, N_{q, i}]: \psi_{n+4} < \frac{b}{i}\right\}
	\]
	for a small constant $b > 0$ (to be specified later). Note that $k \le \min(N_{p, i} - 5, N_{q, i})$, so \eqref{eq:ind2} holds for $n \in [2, k)$.

	We will show by induction that 
	\begin{equation}\label{eq:ind_psi}
	\psi_n - \tilde \psi_{n+2} \le \frac{2c_1n}{i^3}
	\end{equation} 
	for every $n \in [2,k]$. For $n = 2$, \eqref{eq:ind_psi} holds due to \eqref{eq:psi_0}. Suppose it holds for some $n\in [2, k)$. By \eqref{eq:ind2}, we have
	\[
	\psi_{n+1} - \tilde \psi_{n+3} = (\psi_n - \tilde \psi_{n+2}) (1 + \varepsilon(\psi_n + \tilde \psi_{n+2})) + \sigma_n^2 - \tilde \sigma_{n+2}^2,
	\]
	where by \eqref{eq:psi>tilde} and the definition of $k$, $\psi_n + \tilde \psi_{n+2} \le \psi_n + \psi_{n+4} < 2\psi_{n+4} < 2b/i$, so using \eqref{eq:Nasymp}, \eqref{eq:sigma_n} and the inductive assumption (\ref{eq:ind_psi}), we obtain
	\[
	\psi_{n+1} - \tilde \psi_{n+3} \le \frac{2c_1 n}{i^3} \left(1 +  \frac{2\varepsilon b}{i}\right) + \frac{c_1}{i^3} \le \left(2n +  \frac{4\varepsilon b N_{q, i}}{i} + 1\right)\frac{c_1}{i^3} < \frac{(2n + c_2 b  + 1)c_1}{i^3}
	\]
	for some constant $c_2 > 0$. Choosing the constant $b$ in the definition of $k$ sufficiently small, we can assume $c_2 b< 1$, which gives 
	\[
	\psi_{n+1} - \tilde \psi_{n+3} \le \frac{2c_1(n + 1)}{i^3}.
	\]
	This completes the inductive step and proves \eqref{eq:ind_psi}.
	
	By \eqref{eq:Nasymp} and \eqref{eq:ind_psi},
	\begin{equation}\label{eq:c_3}
	\tilde \psi_{k+2} \ge \psi_k - \frac{c_3}{i^2}
	\end{equation}
	for a constant $c_3 > 0$, while (by the definition of $k$),
	\begin{equation}\label{eq:k+5>}
	\psi_{k+5} \ge \frac{b}{i}
	\end{equation}
	and by \eqref{eq:ind2},
	\begin{equation}\label{eq:k+5<}
	\psi_{k+5} = \psi_k + \varepsilon (\psi_k^2 + \cdots + \psi_{k+4}^2) +  \sigma_k^2 + \cdots + \sigma_{k+4}^2  <\psi_k + \frac{5(\varepsilon b^2 + c_1)}{i^2}.
	\end{equation}
	by the definition of $k$, \eqref{eq:ind2} and \eqref{eq:sigma_n}. Using \eqref{eq:c_3}, \eqref{eq:k+5>} and \eqref{eq:k+5<}, we obtain
	\begin{equation}\label{eq:tilde_psi>}
	\tilde \psi_{k+2} \ge \frac{b}{i} - \frac{5(\varepsilon b^2 + c_1)+c_3}{i^2}\ge  \frac{b}{2i}
	\end{equation}
	for large $i$.
	
	Take a large constant $M > 0$. We will show inductively
	\begin{equation}\label{eq:ind_psi2}
	\tilde \psi_{n+M} - \psi_n \ge \frac{M\varepsilon b^2}{5i^2}
	\end{equation}
	for $n \in [k, N_{p, i}]$. By \eqref{eq:ind2}, \eqref{eq:c_3} and \eqref{eq:tilde_psi>}, we have 
	\begin{align*}
	\tilde \psi_{k+M} &\ge \tilde \psi_{k+2} + \varepsilon(\tilde \psi_{k+2}^2 + \cdots +\tilde \psi_{k+M}^2) \ge \tilde \psi_{k+2} + (M-2) \varepsilon\tilde \psi_{k+2}^2 \\
	&\ge \tilde \psi_{k+2} + \frac{(M-2) \varepsilon b^2}{4i^2} \ge \psi_k - \frac{c_3}{i^2}+  \frac{(M-2) \varepsilon b^2}{4i^2} \ge \psi_k + \frac{M\varepsilon b^2}{5i^2},
	\end{align*}
	if $M$ is chosen sufficiently large, so \eqref{eq:ind_psi2} holds for $n = k$. Suppose \eqref{eq:ind_psi2} holds for some $n \in [k, N_{p, i})$. Now \eqref{eq:N<} implies that \eqref{eq:ind2} is valid for $n$, so 
	\[
	\tilde \psi_{n+1+M} - \psi_{n+1} = (\tilde\psi_{n+M} - \psi_n) (1 +  \varepsilon(\tilde\psi_{n+M} + \psi_n)) + \tilde \sigma_{n+M}^2 -\sigma_n^2,
	\]
	where 
	\[
	\tilde\psi_{n+M} + \psi_n > \tilde\psi_{k+M} + \psi_k> \tilde\psi_{k+2}
	\]
	for large $i$ by \eqref{eq:k+5>} and \eqref{eq:k+5<} (which imply $\psi_k > 0$), while 
	\[
	\tilde \sigma_{n+M}^2 -\sigma_n^2 > - \frac{c_4}{i^3}
	\]
	for a constant $c_4 > 0$ by \eqref{eq:Nasymp} and Lemma~\ref{lem:R^n} (with estimates analogous to the ones in \eqref{eq:sigma_n}). Hence, using  \eqref{eq:tilde_psi>} we obtain
	\[
	\tilde \psi_{M+n+1} - \psi_{n+1} \ge 
	\frac{M\varepsilon b^2}{5i^2}(1 + \varepsilon \tilde \psi_{k+2}) - \frac{c_4}{i^3}
	\ge \frac{M\varepsilon b^2}{5i^2} \left(1 + \frac{\varepsilon b}{2i}\right) - \frac{c_4}{i^3} \ge \frac{M\varepsilon b^2}{5i^2},
	\]
	provided $M$ is chosen sufficiently large. This ends the inductive step and proves \eqref{eq:ind_psi2}. 
	
	By \eqref{eq:ind_psi2},
	\[
	\tilde \psi_{N_{p, i}+M} \ge \psi_{N_{p, i}} \ge \delta,
	\]
	so
	\[
	N_{q, i} \le N_{p, i} + M.
	\]
	This and \eqref{eq:N<} end the proof of the lemma.
\end{proof}
\begin{rem}\label{rem:symmetry} Proving Lemmas~\ref{lem:n-n}--\ref{lem:N_i-N_i}, we have made the calculations for the initial point $(x, y) = (r_0 \cos \varphi_0, r_0 \sin \varphi_0)$ assuming $r_0 \in (0,1)$. In fact, the case $r_0 > 1$ can be treated analogously. This can be seen by noting that $\Phi$ is symmetric with respect to $r$ around the circle $r=1$, while the only properties of $R$ used in the proofs of the lemmas are the ones stated in Lemma~\ref{lem:R}. As the initial terms of the Taylor expansion of $R$ near $r=1$ are symmetric around $1$, we see that an analogue of Lemma~\ref{lem:R^n} holds in the case $r_0 > 1$ and the proof of Lemmas~\ref{lem:n-n}--\ref{lem:N_i-N_i} can be repeated in that case. We conclude that Lemmas~\ref{lem:n-n}--\ref{lem:N_i-N_i} hold for every initial point $(x, y) \in \mathbb S^2 \setminus (S \cup \{(0,0), \infty\})$.
\end{rem}

We summarize the results of this subsection in the following proposition.

\begin{prop}\label{prop:N_i}
	For every $(x, y) \in \mathbb S^2 \setminus (S \cup \{(0,0), \infty\})$ and every $\delta > 0$, if $N_{p, i}(x, y)$ $($resp.~$N_{q, i}(x, y))$ is the time spent by the trajectory of $(x, y)$ under $f$ during its $i$-th visit in the $\delta$-neighbourhood $U_p$ of $p$ $($resp.~$U_q$ of $q)$, defined in \eqref{eq:UAB_def}, then
	\[
	N_{p, i}(x, y) \asymp N_{q, i}(x, y) \asymp i
	\]
	and
	\[ |N_{p, i}(x, y) - N_{q, i}(x, y)| \leq M \]
	for some constant $M>0$, while the times spent by the trajectory between consecutive visits in $U_p \cup U_q$ are uniformly bounded.
\end{prop}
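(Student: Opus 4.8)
The plan is to obtain the proposition by assembling Lemmas~\ref{lem:n-n}, \ref{lem:N_i} and~\ref{lem:N_i-N_i}, after reducing the general situation to the one in which those lemmas were proved.

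First I would fix $(x,y) \in \mathbb S^2 \setminus (S \cup \{(0,0),\infty\})$ and $\delta > 0$, and note that we may assume $\delta$ is small (this is the regime used in Corollary~\ref{cor:N_i}, and for small $\delta$ the neighbourhoods $U_p$, $U_q$ of~\eqref{eq:UAB_def} are disjoint and one of the two orderings displayed around~\eqref{eq:n<} holds). Since $(0,0)$ and $\infty$ are the only fixed points of $f$ outside $S$, while on $S$ every orbit converges to $p$ or $q$ (as noted following Lemma~\ref{lem:Phi}), the set $S \cup \{(0,0),\infty\}$ is precisely the set of initial points whose trajectory fails to visit $U_p$ and $U_q$ infinitely often; for $(x,y)$ outside this set the times $N_{p,i}(x,y)$, $N_{q,i}(x,y)$ and the inter-visit times are well defined by the recursive construction preceding Lemma~\ref{lem:n-n}.

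With this in place, the three assertions of the proposition are read off directly: the uniform bound on the times spent between consecutive visits to $U_p \cup U_q$ is Lemma~\ref{lem:n-n}; the relation $N_{p,i} \asymp N_{q,i} \asymp i$ is Lemma~\ref{lem:N_i}; and the bound $|N_{p,i}-N_{q,i}| < M$ is Lemma~\ref{lem:N_i-N_i}. Two reductions remain. If the orbit first enters $U_q$ rather than $U_p$, one swaps the roles of $p$ and $q$; all three lemmas are symmetric under this relabelling, so the estimates hold in either case. If $r_0 > 1$ rather than $r_0 \in (0,1)$, one invokes Remark~\ref{rem:symmetry}: $\Phi$ is symmetric in $r$ about $r=1$ and the proofs of Lemmas~\ref{lem:n-n}--\ref{lem:N_i-N_i} use $R$ only through the properties collected in Lemma~\ref{lem:R}, whose relevant Taylor coefficients near $r=1$ are symmetric about $1$, so an analogue of Lemma~\ref{lem:R^n} holds for $r_0>1$ and the arguments carry over verbatim.

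The point to emphasise is that all the real work lies in the lemmas, not in the proposition, which is merely a summary of them. Among those lemmas, the genuine difficulty is the second inequality of Lemma~\ref{lem:N_i-N_i}, namely $N_{q,i} \le N_{p,i}+M$: the crude parabolic-passage estimate behind Lemma~\ref{lem:N_i} loses control of the multiplicative constant, so one must track the angular sequences $\psi_n$ and $\tilde\psi_n$ through three regimes — up to the scale $\approx 1/i$, with error accumulation bounded by $O(n/i^3)$, and then a terminal comparison with a fixed shift $M$ — using the sharp asymptotics of $\rho_n$ from Lemma~\ref{lem:R^n}. I expect this to be the main obstacle.
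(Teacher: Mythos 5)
Your proposal is correct and matches the paper exactly: the paper offers no separate argument for Proposition~\ref{prop:N_i}, introducing it with the words ``We summarize the results of this subsection,'' so the proposition is precisely the assembly of Lemmas~\ref{lem:n-n}, \ref{lem:N_i} and~\ref{lem:N_i-N_i} together with Remark~\ref{rem:symmetry} for the case $r_0>1$, as you describe. Your identification of the second inequality of Lemma~\ref{lem:N_i-N_i} as the genuinely delicate ingredient is also accurate.
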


This implies the following. 

\begin{cor}\label{cor:N_i}
	For every $(x, y) \in \mathbb S^2 \setminus (S \cup \{(0,0), \infty\})$,
	\[ \lim \limits_{m \to \infty} \frac{1}{m} \sum \limits_{n=0}^{m-1} \delta_{f^n(x, y)} = \frac 1 2 \delta_p + \frac 1 2 \delta_q \]
	in the sense of weak-$^*$ convergence.
\end{cor}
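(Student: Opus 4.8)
We prove the corollary by testing the empirical averages against continuous functions and reducing to a counting statement. Fix $(x,y)\in\mathbb S^2\setminus(S\cup\{(0,0),\infty\})$ and $g\in C(\mathbb S^2)$; since $\mathbb S^2$ is compact, weak-$^*$ convergence is equivalent to convergence of integrals against all such $g$, so it suffices to show $\frac1m\sum_{n=0}^{m-1}g(f^n(x,y))\to\frac12(g(p)+g(q))$. Fix a small $\delta>0$, small enough that $U_p\cap U_q=\emptyset$; by \eqref{eq:UAB_def} the closures $\overline{U_p},\overline{U_q}$ have Euclidean diameter at most $C\delta$ for a constant $C$, so $|g(z)-g(p)|\le\omega(C\delta)$ for $z\in\overline{U_p}$ and $|g(z)-g(q)|\le\omega(C\delta)$ for $z\in\overline{U_q}$, where $\omega$ is the modulus of continuity of $g$. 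Put $a_p(m)=\#\{0\le n<m:f^n(x,y)\in U_p\}$, define $a_q(m)$ analogously, and $a_0(m)=m-a_p(m)-a_q(m)$. Splitting the sum $\sum_{n=0}^{m-1}g(f^n(x,y))$ according to whether $f^n(x,y)$ lies in $U_p$, in $U_q$, or in neither, one obtains
\[
\Big|\frac1m\sum_{n=0}^{m-1}g(f^n(x,y))-\frac{a_p(m)}{m}g(p)-\frac{a_q(m)}{m}g(q)\Big|\le 2\omega(C\delta)+\frac{a_0(m)}{m}\|g\|_\infty .
\]
Thus it is enough to prove $a_p(m)/m\to\frac12$, $a_q(m)/m\to\frac12$ and $a_0(m)/m\to0$, and then let $\delta\to0$.

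To establish these, we use Proposition~\ref{prop:N_i}, allowing all implied constants below to depend on $(x,y)$ and $\delta$. Assume the ordering \eqref{eq:n<} holds (the other case is symmetric, and the limiting measure is symmetric in $p,q$). Let $\tau_k=n^+_{q,k}$ be the time just after the $k$-th visit to $U_q$. By \eqref{eq:n<}, the orbit segment $f^0(x,y),\dots,f^{\tau_k-1}(x,y)$ consists of the $k$ complete visits to $U_p$, the $k$ complete visits to $U_q$, transition stretches each of length at most $N_0$, and an initial stretch of fixed length $n^-_{p,1}$. Hence $a_p(\tau_k)=\sum_{i=1}^k N_{p,i}$, $a_q(\tau_k)=\sum_{i=1}^k N_{q,i}$, and $a_0(\tau_k)=\tau_k-a_p(\tau_k)-a_q(\tau_k)\le n^-_{p,1}+2kN_0$. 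By Proposition~\ref{prop:N_i}, $N_{p,i}\asymp N_{q,i}\asymp i$, so $\sum_{i=1}^k N_{p,i}\asymp\sum_{i=1}^k N_{q,i}\asymp k^2$, whence $\tau_k\asymp k^2$, and $\tau_{k+1}-\tau_k=N_{p,k+1}+N_{q,k+1}+O(1)=O(k)$. Writing $k=k(m)$ for the largest integer with $\tau_k\le m$, we get $k(m)\asymp\sqrt m$ and $0\le m-\tau_{k(m)}\le\tau_{k(m)+1}-\tau_{k(m)}=O(\sqrt m)$.

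The functions $a_p,a_q,a_0$ are nondecreasing, $a_p(\tau_{k+1})-a_p(\tau_k)=N_{p,k+1}=O(k)$ and likewise for $a_q$, while $a_0(m)-a_0(\tau_k)\le m-\tau_k$. Evaluating at $m$ with $\tau_{k}\le m<\tau_{k+1}$, $k=k(m)$, and using $k\asymp\sqrt m$, we get
\[
a_p(m)=\sum_{i=1}^{k}N_{p,i}+O(\sqrt m),\qquad a_q(m)=\sum_{i=1}^{k}N_{q,i}+O(\sqrt m),\qquad a_0(m)=O(\sqrt m).
\]
Adding the first two identities and using $a_p(m)+a_q(m)=m-a_0(m)=m+O(\sqrt m)$ gives $\sum_{i=1}^k(N_{p,i}+N_{q,i})=m+O(\sqrt m)\asymp m$; subtracting them and using the sharp bound $|N_{p,i}-N_{q,i}|\le M$ from Proposition~\ref{prop:N_i} gives $a_p(m)-a_q(m)=\sum_{i=1}^k(N_{p,i}-N_{q,i})+O(\sqrt m)=O(k)+O(\sqrt m)=O(\sqrt m)$. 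Therefore $a_p(m)=\frac12\big(m+O(\sqrt m)\big)$ and similarly for $a_q(m)$, so $a_p(m)/m\to\frac12$, $a_q(m)/m\to\frac12$ and $a_0(m)/m\to0$. Plugging this into the displayed inequality, letting $m\to\infty$ and then $\delta\to0$ (so that $\omega(C\delta)\to0$ by uniform continuity of $g$), we conclude $\frac1m\sum_{n=0}^{m-1}g(f^n(x,y))\to\frac12(g(p)+g(q))$, which is exactly the claimed weak-$^*$ convergence.

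The heart of the matter is entirely contained in Proposition~\ref{prop:N_i} (that is, in the earlier Lemmas~\ref{lem:n-n}--\ref{lem:N_i-N_i}); the remaining bookkeeping presents no real obstacle, but one delicate point deserves emphasis: the comparability $N_{p,i}\asymp N_{q,i}$ alone only yields time fractions bounded away from $0$ and $1$, not equal to $\frac12$. It is precisely the refined estimate $|N_{p,i}-N_{q,i}|\le M$ of Lemma~\ref{lem:N_i-N_i} that pins the fractions to $\frac12$, and one must keep the $O(\sqrt m)$ error terms under control to see that the accumulated discrepancy $\sum_{i\le k}(N_{p,i}-N_{q,i})=O(k)=O(\sqrt m)$ is negligible next to $m\asymp k^2$.
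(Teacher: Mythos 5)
Your proof is correct and follows essentially the same route as the paper's: reduce to showing that the fractions of time spent in $U_p$, in $U_q$, and outside both converge to $\tfrac12$, $\tfrac12$ and $0$, and deduce this from Proposition~\ref{prop:N_i}, with the refined bound $|N_{p,i}-N_{q,i}|\le M$ controlling the accumulated discrepancy $O(k)=O(\sqrt m)=o(m)$. The only difference is cosmetic: you spell out the reduction from weak-$^*$ convergence to the indicator-counting statement via continuous test functions and a modulus of continuity, a step the paper compresses into ``it is sufficient to prove''.
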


\begin{proof}
Fix $(x, y) \in \mathbb S^2 \setminus (S \cup \{(0,0), \infty\})$ and $\delta>0$. It is sufficient to prove that for the $\delta$-neighbourhoods $U_p$ and $U_q$, defined in \eqref{eq:UAB_def}, one has
\[ \lim \limits_{m \to \infty} \frac{1}{m} \sum \limits_{n=0}^{m-1} \mathds{1}_{U_p} \big(f^n(x , y) \big) = \lim \limits_{m \to \infty} \frac{1}{m} \sum \limits_{n=0}^{m-1} \mathds{1}_{U_q} \big(f^n(x , y)\big) = \frac{1}{2}. \]
Fix $m \in \N$ and let $i = i(m)$ be the number of visits of $(x , y)$ to $U_p$ completed up to the time $m$, i.e.~let $i$ be the unique number such that
\[ n^-_{p, i} \leq m < n^-_{p,i+1}. \]
Then by Proposition~\ref{prop:N_i}, there exist a constant $c>0$ (independent of $m$) such that
\[ \frac{i^2}{c}\leq\sum \limits_{n=0}^{m-1} \mathds{1}_{U_p}\big(f^n(x , y)\big)  \leq c i^2, \qquad \frac{i^2}{c}\leq \sum \limits_{n=0}^{m-1} \mathds{1}_{U_q}\big(f^n(x , y)\big) \leq c i^2, \]
and
\[ \sum \limits_{n=0}^{m-1} \mathds{1}_{(U_p \cup U_q)^c}\big(f^n(x , y)\big) \leq ci. \]
This implies
\begin{equation}\label{eq:Ni_bound} \frac{2i^2}{c} \leq m \leq 3ci^2
\end{equation}
provided $i$ is large enough (which holds if $m$ is large enough). Therefore,
\[ \lim \limits_{m \to \infty} \frac{1}{m} \sum \limits_{n=0}^{m-1} \mathds{1}_{(U_p \cup U_q)^c}\big(f^n(x , y)\big) = 0  \]
and hence
\begin{equation}\label{eq:AB_sum} \lim \limits_{m \to \infty} \bigg( \frac{1}{m} \sum \limits_{n=0}^{m-1} \mathds{1}_{U_p}\big(f^n(x , y)\big) + \frac{1}{m} \sum \limits_{n=0}^{m-1} \mathds{1}_{U_q}\big(f^n(x , y)\big) \bigg) = 1.
\end{equation}
Proposition \ref{prop:N_i} together with \eqref{eq:Ni_bound} implies
\[ \bigg|\frac{1}{m} \sum \limits_{n=0}^{m-1} \mathds{1}_{U_p} \big(f^n(x , y) \big) - \frac{1}{m}  \sum \limits_{n=0}^{m-1} \mathds{1}_{U_q} \big(f^n(x , y) \big) \bigg| \leq \frac{C}{i}  \]
for a constant $C >0$ (independent of $m$), hence
\begin{equation}\label{eq:AB_diff}
\lim \limits_{m \to \infty }\bigg|\frac{1}{m} \sum \limits_{n=0}^{m-1} \mathds{1}_{U_p} \big(f^n(x , y) \big) - \frac{1}{m}  \sum \limits_{n=0}^{m-1} \mathds{1}_{U_q} \big(f^n(x , y) \big) \bigg| = 0.
\end{equation}
Combining \eqref{eq:AB_sum} with \eqref{eq:AB_diff} finishes the proof (it is enough to notice that if $a_n, b_n$ are sequences of real numbers with $\lim_{n \to \infty} (a_n + b_n) = 1$ and $\lim_{n \to \infty} |a_n - b_n| = 0$, then $\lim_{n \to \infty} a_n = \lim_{n \to \infty} b_n = \frac{1}{2}$). 
\end{proof}

\subsection{\boldmath Construction of the diffeomorphism $T\colon \mathbb S^2 \times \mathbb S^1 \to \mathbb S^2 \times \mathbb S^1$} \label{subsec:S^2xS^1}

Let 
\[
X = \mathbb S^2 \times \mathbb S^1,
\]
where $\mathbb S^2\simeq \R^2 \cup \{\infty\}$ and $\mathbb S^1 \simeq \R / \Z$. We can assume $X \subset \R^N$ for some $N \in \N$. Let 
\[ 
R_\alpha \colon\mathbb{S}^1 \to \mathbb{S}^1, \qquad R_\alpha(t) =  t + \alpha \mod 1, \qquad \alpha \in \R \setminus \Q
\] 
be an irrational rotation. Recall that the normalized Lebesgue measure on $\mathbb{S}^1$ is the unique $R_\alpha$-invariant Borel probability measure. Let 
\[
g\colon \mathbb{S}^1 \to \mathbb{S}^1, \qquad g(t) = t + \frac{1}{100}\sin^2(\pi t) \mod 1.
\]
Note that $g$ is a $C^{\infty}$-diffeomorphism of $\mathbb{S}^1$ with $0$ as the unique fixed point. Moreover, $\lim_{n \to \infty} g^n(t) = 0$ for every $t \in \mathbb{S}^1$. Therefore, $\delta_0$ is the unique $g$-invariant Borel probability measure. Let $f \colon\mathbb{S}^2 \to \mathbb{S}^2$ be the diffeomorphism defined in Subsection~\ref{subsec:S^2}, with the invariant unit circle $S \subset \mathbb S^2$ and the fixed points $p, q \in S$. Fix a small $\delta>0$ and consider the $\delta$-neighbourhoods $U_p, U_q \subset \mathbb S^2$ of $p$ and $q$, respectively, defined in \eqref{eq:UAB_def}. 
Let
\[ 
T \colon X \to X, \qquad T(z,t) = (f(z), h_{z}(t)), \qquad z \in \mathbb{S}^2,\; t \in \mathbb{S}^1, 
\]
where $h_z$ are diffeomorphisms of $\mathbb{S}^1$ depending smoothly on $z \in \mathbb{S}^2$, such that $h_z = g$ for $z \in U_p$, $h_z = R_\alpha$ for $z \in U_q$, and for $z$ outside $U_p \cup U_q$, $h_z$ is defined in any manner which makes $T$ a $C^{\infty}$-diffeomorphism of $X$. \footnote{This is possible since $g$ is smoothly isotopic to identity by the family of maps $g_{\eps}(t) = t + \eps\sin^2(\pi t) \text{ mod } 1$,  $\eps \in [0, \frac{1}{100}]$, while $R_\alpha$ is smoothly isotopic to identity by the family of maps $R_{\eps}(t) = t + \eps$, $\eps \in [0,\alpha]$.}

In view of Corollary~\ref{cor:unnatural_counterexample}, to conclude the proof of Theorem~\ref{thm:counterexample_main}, it is sufficient to show the following.

\begin{thm}\label{thm:natural_counterexample}
The map $T$ has an attractor 
\[
\Lambda = S \times \mathbb{S}^1
\]
with the basin $B(\Lambda) = (\mathbb S^2 \setminus\{(0,0), \infty\}) \times \mathbb{S}^1$ and natural measure 
\[
\mu = \frac 1 2 \delta_{p_0} + \frac 1 2 \Leb_{\mathbb S^1},
\]
where $p_0 = (p,0)$ and $\Leb_{\mathbb S^1}$ is the Lebesgue measure on the circle $\{q\} \times \mathbb S^1$.
\end{thm}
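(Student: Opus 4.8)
The plan is to establish, in turn, the three assertions of the theorem: that $\Lambda=S\times\mathbb S^1$ is an attractor, that its basin equals $(\mathbb S^2\setminus\{(0,0),\infty\})\times\mathbb S^1$, and that $\mu$ is the corresponding natural measure. First I would note that $\Lambda$ is compact and $T$-invariant, since $S$ is $f$-invariant and every $h_z$ is a diffeomorphism of $\mathbb S^1$ (in fact $T(\Lambda)=\Lambda$). For the basin: if $z\in\{(0,0),\infty\}$ then $f^n(z)=z\notin S$, so $\dist(T^n(z,t),\Lambda)\ge\dist(z,S)>0$ and $(z,t)\notin B(\Lambda)$; if $z\in S$ then $T^n(z,t)\in\Lambda$ for all $n$; and if $z\in\mathbb S^2\setminus(S\cup\{(0,0),\infty\})$ then $f^n(z)\to S$ (by Lemma~\ref{lem:R}, since the radial coordinate converges monotonically to $1$), whence $\dist(T^n(z,t),\Lambda)\le\dist(f^n(z),S)\to0$. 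Thus $B(\Lambda)=(\mathbb S^2\setminus\{(0,0),\infty\})\times\mathbb S^1$, which is open and contains $\Lambda$, so $\Lambda$ is an attractor. Finally, because $p\in U_p$ and $q\in U_q$, one has $T(p,0)=(p,g(0))=(p,0)$ and $T|_{\{q\}\times\mathbb S^1}=\id\times R_\alpha$, so $\mu=\tfrac12\delta_{p_0}+\tfrac12\Leb_{\mathbb S^1}$ is a $T$-invariant Borel probability measure supported on $\Lambda$.

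For the natural-measure statement I would first reduce it to a pointwise ergodic-averaging claim. The volume measure on $X$ is, up to normalization, $\Leb_{\mathbb S^2}\times\Leb_{\mathbb S^1}$; since $\Leb_{\mathbb S^2}(S)=0$ and $\{(0,0),\infty\}\times\mathbb S^1$ is a union of two circles, hence volume-null, it suffices to prove that for every $z\in\mathbb S^2\setminus(S\cup\{(0,0),\infty\})$ and every $t\in\mathbb S^1$,
\[
\frac1m\sum_{n=0}^{m-1}\delta_{T^n(z,t)}\ \underset{m\to\infty}{\longrightarrow}\ \mu
\]
in the weak-$^*$ topology. Fixing such a point, a continuous $\psi\colon X\to\R$ and $\eps>0$, I would choose $\delta>0$ small enough that Proposition~\ref{prop:N_i} and Corollary~\ref{cor:N_i} apply and, moreover, $|\psi(z',t')-\psi(p,0)|<\eps$ whenever $z'\in U_p$, $d(t',0)<\eps$, and $|\psi(z',s)-\psi(q,s)|<\eps$ whenever $z'\in U_q$, $s\in\mathbb S^1$ (using uniform continuity of $\psi$ and shrinking $U_p,U_q$). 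Then I would split $\{0,\dots,m-1\}$ into the blocks corresponding to the successive visits of the $f$-orbit of $z$ to $U_p$, to $U_q$ and to the transit region $(U_p\cup U_q)^c$, as in Subsection~\ref{subsec:S^2}; writing $i=i(m)$ for the number of completed $U_p$-visits, one has $i\asymp\sqrt m$, the transit blocks together have length $O(i)=o(m)$, and by Proposition~\ref{prop:N_i} and the argument in the proof of Corollary~\ref{cor:N_i}, $\tfrac1m\sum_{j\le i}N_{p,j}\to\tfrac12$ and $\tfrac1m\sum_j N_{q,j}\to\tfrac12$.

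The key point is the fiber behaviour on the blocks. On a $U_p$-block the second coordinate traces a $g$-orbit segment $s_j,g(s_j),g^2(s_j),\dots$, with $s_j$ the uncontrolled fiber value on entry; since $g$ has $0$ as its unique fixed point with every orbit converging to it and $g(t')-t'=\tfrac1{100}\sin^2(\pi t')$ is bounded below by a positive constant on $\{d(t',0)\ge\eps\}$, such a segment spends at most a constant number $C_\eps$ of its steps outside $\{d(\cdot,0)<\eps\}$, uniformly in its length and in $s_j$. On a $U_q$-block the second coordinate traces an $R_\alpha$-orbit segment, and by uniform equidistribution of irrational rotations, $\tfrac1N\sum_{l=0}^{N-1}\psi(q,R_\alpha^l(s))\to\int_{\mathbb S^1}\psi(q,\cdot)\,d\Leb_{\mathbb S^1}$ as $N\to\infty$, uniformly in $s$. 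Hence there is $J=J(\eps)$ so that for $j>J$ every $U_p$-block has $\psi$-average within $O(\eps)$ of $\psi(p,0)$ and every $U_q$-block has $\psi$-average within $O(\eps)$ of $\int_{\mathbb S^1}\psi(q,\cdot)\,d\Leb_{\mathbb S^1}$; the blocks with $j\le J$ have total length $O(J^2)=o(m)$ and the transit blocks $o(m)$, so they contribute $o(1)$. Combining this with the weights $\tfrac1m\sum N_{p,j}\to\tfrac12$, $\tfrac1m\sum N_{q,j}\to\tfrac12$ and letting first $m\to\infty$, then $\eps\to0$, yields $\tfrac1m\sum_{n<m}\psi(T^n(z,t))\to\tfrac12\psi(p,0)+\tfrac12\int_{\mathbb S^1}\psi(q,\cdot)\,d\Leb_{\mathbb S^1}=\int_X\psi\,d\mu$, which is the desired convergence.

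The main obstacle is this last step: organizing the block decomposition so that all the error terms are genuinely uniform — in particular that the concentration of $g$-orbit segments near $0$ and the equidistribution of the $R_\alpha$-orbit segments hold uniformly in the uncontrolled fiber coordinate at the instant the base orbit enters $U_p$, resp.\ $U_q$, and that the boundedly many short initial blocks together with the uniformly bounded transit blocks are negligible after dividing by $m\asymp i^2$. The spiralling estimates of Subsection~\ref{subsec:S^2}, namely Proposition~\ref{prop:N_i} and Corollary~\ref{cor:N_i}, enter only as a black box.
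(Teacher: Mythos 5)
Your proof is correct, and the attractor/basin part coincides with the paper's. For the natural-measure statement, however, you take a genuinely different route at the key step. The paper also splits the orbit into $U_p$-visits, $U_q$-visits and transits via Proposition~\ref{prop:N_i} and Corollary~\ref{cor:N_i}, but it then concatenates all the fiber coordinates seen during $U_p$-visits into a single sequence of Dirac masses $\delta_{t_j}$, observes that $\delta_{t_{j+1}}=g_*\delta_{t_j}$ except on a set of indices of asymptotic density zero (the last index of each visit), and invokes a soft lemma (Lemma~\ref{lem:density_zero_limit}) saying that such density-zero perturbations do not affect the invariance of weak-$^*$ limit points; unique ergodicity of $g$ and $R_\alpha$ then identifies the limits as $\delta_0$ and $\Leb_{\mathbb S^1}$ without any quantitative control. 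You instead estimate each visit block separately: for $R_\alpha$ you use the uniform convergence of Birkhoff averages for uniquely ergodic maps, and for $g$ you use the explicit displacement bound $g(t)-t\ge \frac{1}{100}\sin^2(\pi\eps)$ off a neighbourhood of $0$ together with monotonicity of $g$-orbits to get a bound $C_\eps$, uniform in the entry point, on the number of steps a segment spends away from $0$. Both arguments are sound; yours is more elementary and self-contained (it never needs Lemma~\ref{lem:density_zero_limit}) at the cost of more uniformity bookkeeping — in particular you must shrink $U_p,U_q$ depending on $\psi$ and $\eps$, which is legitimate since Proposition~\ref{prop:N_i} holds for every $\delta>0$ and $h_z$ equals $g$, resp.\ $R_\alpha$, on the smaller neighbourhoods — while the paper's lemma cleanly absorbs all boundary effects at once and is reusable (it handles both fibers by the same mechanism).
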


Before proving Theorem \ref{thm:natural_counterexample} we show the following lemma.

\begin{lem}\label{lem:density_zero_limit}
	Let $T \colon X \to X$ be a continuous transformation of a compact metric space. Let $\nu_n,\ n \geq 0$, be a sequence of Borel probability measures on $X$ and let $\mathcal A \subset \N \cup \{0\}$ be a set of asymptotic density zero, i.e.
	\[ \lim \limits_{m \to \infty} \frac{1}{m} \#\{ 0 \leq n < m : n\in \mathcal A \} = 0. \]
	Assume $\nu_{n+1} = T_*\nu_n$ for $n \notin \mathcal A$. Then any weak-$^*$ limit point of the sequence
	\begin{equation*}
	\frac{1}{m} \sum \limits_{n=0}^{m-1} \nu_n
	\end{equation*} is $T$-invariant.
\end{lem}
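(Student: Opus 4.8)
The plan is to run the classical Krylov--Bogolyubov argument, treating the indices in $\mathcal A$ as a negligible perturbation. Write $\sigma_m = \frac{1}{m}\sum_{n=0}^{m-1}\nu_n$ and suppose that $\sigma_{m_j} \to \sigma$ weak-$^*$ along some subsequence $m_j \to \infty$; we must show that $T_*\sigma = \sigma$. The key is to estimate $T_*\sigma_m - \sigma_m$ in the total variation norm $\|\cdot\|$ on (signed) Borel measures on $X$.

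First I would write
\[
T_*\sigma_m - \sigma_m = \frac{1}{m}\sum_{n=0}^{m-1}\bigl(T_*\nu_n - \nu_n\bigr),
\]
and use that $T_*\nu_n = \nu_{n+1}$ for $n \notin \mathcal A$ to rewrite this as
\[
T_*\sigma_m - \sigma_m = \frac{1}{m}\sum_{n=0}^{m-1}(\nu_{n+1}-\nu_n) + \frac{1}{m}\sum_{\substack{0 \le n < m \\ n \in \mathcal A}}(T_*\nu_n - \nu_{n+1}) = \frac{1}{m}(\nu_m - \nu_0) + \frac{1}{m}\sum_{\substack{0 \le n < m \\ n \in \mathcal A}}(T_*\nu_n - \nu_{n+1}),
\]
where the first sum telescopes. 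Since all the $\nu_n$ and all the $T_*\nu_n$ are probability measures, $\|\nu_m - \nu_0\| \le 2$ and $\|T_*\nu_n - \nu_{n+1}\| \le 2$, so
\[
\|T_*\sigma_m - \sigma_m\| \le \frac{2}{m} + \frac{2}{m}\, \#\{\, 0 \le n < m : n \in \mathcal A \,\} \longrightarrow 0
\]
as $m \to \infty$, the last limit being precisely the asymptotic density zero hypothesis on $\mathcal A$.

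Finally I would fix $f \in C(X)$; then $f \circ T \in C(X)$ since $T$ is continuous, and
\[
\Bigl| \int f \circ T \, d\sigma_m - \int f \, d\sigma_m \Bigr| = \Bigl| \int f \, d(T_*\sigma_m - \sigma_m) \Bigr| \le \|f\|_\infty \, \|T_*\sigma_m - \sigma_m\| \longrightarrow 0.
\]
Letting $m = m_j \to \infty$ and using $\sigma_{m_j} \to \sigma$ applied to the two continuous functions $f$ and $f \circ T$, one obtains $\int f \circ T \, d\sigma = \int f \, d\sigma$; as $f \in C(X)$ was arbitrary, this says exactly that $T_*\sigma = \sigma$.

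There is no essential obstacle here, the argument being the standard proof that Ces\`aro averages of pushforwards converge to invariant measures. The only point that requires a little care is the presence of the ``gaps'' in the telescoping sum caused by the indices $n \in \mathcal A$: one completes $\sum_{n \notin \mathcal A}(\nu_{n+1}-\nu_n)$ to a full telescoping sum at the price of finitely many correction terms, whose number is $o(m)$ by hypothesis and each of which has total variation at most $2$. (Existence of weak-$^*$ limit points of the sequence $(\sigma_m)$ is automatic from compactness of the space of Borel probability measures on the compact space $X$, but the statement already takes such a limit point as given.)
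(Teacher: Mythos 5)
Your proof is correct and follows essentially the same Krylov--Bogolyubov-type argument as the paper: bound $\|T_*\sigma_m-\sigma_m\|$ in total variation using the telescoping of $\nu_{n+1}-\nu_n$ together with the density-zero hypothesis, then pass to the weak-$^*$ limit with test functions $f$ and $f\circ T$. Your bookkeeping is in fact marginally cleaner: by writing $T_*\nu_n-\nu_n=(\nu_{n+1}-\nu_n)+(T_*\nu_n-\nu_{n+1})$ you complete the telescope over all indices and confine the error to $n\in\mathcal A$, whereas the paper splits the sum into the $\mathcal A$ and $\mathcal A^c$ parts and then counts which terms of the gapped telescope fail to cancel; both give the same $O(1/m)+O(\#(\mathcal A\cap[0,m))/m)$ bound.
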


\begin{proof}
Let $\nu$ be a weak-$^*$ limit of a sequence $\frac{1}{m_k} \sum \limits_{n=0}^{m_k-1} \nu_n$ for some sequence $m_k \nearrow \infty$. Then
\begin{equation}\label{eq:Tnu-nu} T_*\nu - \nu = \lim \limits_{k \to \infty} \frac{1}{m_k} \sum \limits_{n=0}^{m_k-1} (T_*\nu_n - \nu_n)
\end{equation}
and we will prove
\begin{equation}\label{eq:Alim0}
\lim \limits_{k \to \infty}  \Big\| \frac{1}{m_k} \sum \limits_{n=0}^{m_k-1} (T_*\nu_{n} - \nu_n)\mathds{1}_{\mathcal{A}}(n)  \Big\| =0
\end{equation}
and
\begin{equation}\label{eq:Aclim0}
\lim \limits_{k \to \infty}  \Big\| \frac{1}{m_k} \sum \limits_{n=0}^{m_k-1} (T_*\nu_{n} - \nu_n)\mathds{1}_{\mathcal{A}^c}(n) \Big\| = \lim \limits_{k \to \infty}  \Big\| \frac{1}{m_k} \sum \limits_{n=0}^{m_k-1} (\nu_{n+1} - \nu_n)\mathds{1}_{\mathcal{A}^c}(n) \Big\| = 0,
\end{equation} 
where $\| \cdot \|$ stands for the total variation norm. Due to (\ref{eq:Tnu-nu}), this will imply $T_* \nu = \nu$.
For \eqref{eq:Alim0}, we have
\[\lim \limits_{k \to \infty} \Big\| \frac{1}{m_k} \sum \limits_{n=0}^{m_k-1} (T_*\nu_{n} - \nu_n)\mathds{1}_{\mathcal{A}}(n) \Big\| \leq \lim \limits_{k \to \infty} \frac{2}{m_k} \sum \limits_{n=0}^{m_k-1} \mathds{1}_{\mathcal{A}}(n) = 0,\]
as the asymptotic density of $\mathcal{A}$ is zero and all $\nu_n$ and $T_*\nu_n$ are probability measures. For \eqref{eq:Aclim0}, observe that the first equality follows by assumptions, and for a given $n \in \{0, \ldots, m_{k}-2\}$, if both $n$ and $n+1$ are in $\mathcal{A}^c$, then $\nu_{n+1}$ cancels out in the sum $\sum_{n=0}^{m_k-1} (\nu_{n+1} - \nu_n)\mathds{1}_{\mathcal{A}^c}(n)$ and otherwise it appears in the above sum at most once (possibly with a negative sign). The terms $\nu_0$ and $\nu_{m_k}$ appear at most once. Therefore,
\begin{align*}
&\lim \limits_{k \to \infty} \Big\| \frac{1}{m_k} \sum \limits_{n=0}^{m_k-1} (\nu_{n+1} - \nu_n)\mathds{1}_{\mathcal{A}^c}(n) \Big\|\\
&\leq \lim \limits_{k \to \infty} \frac{1}{m_k} \Big( \| \nu_{m_k} \| + \| \nu_0 \| + \sum \limits_{n=0}^{m_k-2} \| \nu_{n+1} \| \big(1- \mathds{1}_{\mathcal{A}^c}(n)\mathds{1}_{\mathcal{A}^c}(n+1)\big) \Big)\\
&= 
\lim \limits_{k \to \infty} \frac{1}{m_k}\Big(2+ \sum \limits_{n=0}^{m_{k}-2} \big(1- \mathds{1}_{\mathcal{A}^c}(n)\mathds{1}_{\mathcal{A}^c}(n+1)\big)\Big)\\
&\leq \lim \limits_{k \to \infty} \frac{1}{m_k} \Big( 2 + \sum \limits_{n=0}^{m_{k}-2} \big(\mathds{1}_{\mathcal{A}}(n) + \mathds{1}_{\mathcal{A}}(n+1)\big) \Big) = 0.
\end{align*}

\end{proof}

Let us proceed now with the proof of Theorem \ref{thm:natural_counterexample}.

\begin{proof}[Proof of Theorem~{\rm\ref{thm:natural_counterexample}}]
By the construction of $f$, the set $\Lambda$ is a compact $T$-invariant set, and for every $(z,t) \in (\mathbb S^2 \setminus \{(0,0), \infty\}) \times \mathbb{S}^1$, we have $\dist(T^n(z,t), \Lambda)$ as $n \to \infty$. Hence, $\Lambda$ is an attractor for $T$ with the basin $B(\Lambda) = (\mathbb S^2 \setminus\{(0,0), \infty\}) \times \mathbb{S}^1$. To prove that $\mu$ is a natural measure for $T$, we show that the sequence of measures
	\[
	\mu_m = \frac{1}{m}\sum\limits_{n=0}^{m-1} \delta_{T^n (z,t)}
	\]
converges to $\mu$ in the weak-$^*$ topology for every $(z,t) \in (\mathbb S^2 \setminus (S \cup \{(0,0), \infty\}) \times \mathbb{S}^1$. It is enough to prove that every limit point of the sequence $\mu_m$ is equal to $\mu$. It follows from Corollary~\ref{cor:N_i} that every such limit point must be of the form $\nu_1/2 + \nu_2/2$, where $\nu_1$ is a probability measure on the circle $\{p\} \times \mathbb{S}^1$ and $\nu_2$ is a probability measure on the circle $\{q\} \times \mathbb{S}^1$. Our goal is to show that $\nu_1 = \delta_{(p, 0)}$ and $\nu_2 = \Leb_{\mathbb S^1}$, where $\Leb_{\mathbb S^1}$ is the Lebesgue measure on $\{q\} \times \mathbb S^1$. 

Take $m_k \nearrow \infty$ such that $\lim \limits_{k \to \infty} \mu_{m_k} = \nu_1/2 + \nu_2/2$. Let
	\[ \vartheta_{p, k} = \frac{1}{m_k}\sum\limits_{n=0}^{m_k - 1} \mathds{1}_{U_p}(f^n(z)) \,\delta_{T^n (z,t)},\qquad  \vartheta_{q, k} = \frac{1}{m_k}\sum\limits_{n=0}^{m_k - 1} \mathds{1}_{U_q}(f^n(z))\, \delta_{T^n (z,t)}\]
	and
	\[ \vartheta_{O, k} = \frac{1}{m_k}\sum\limits_{n=0}^{m_k - 1} \mathds{1}_{\mathbb S^2 \setminus (S \cup \{(0,0), \infty\} \cup U_p \cup U_q)}(f^n(z)) \, \delta_{T^n (z,t)}. \]
	Clearly,
	\[ \mu_{m_k} = \vartheta_{p, k} + \vartheta_{q, k} + \vartheta_{O, k}. \]
	By Corollary~\ref{cor:N_i},
	\[\lim \limits_{k \to \infty} \vartheta_{p, k} = \frac 1 2 \nu_1,\ \lim \limits_{k \to \infty} \vartheta_{q, k} = \frac 1 2 \nu_2\ \text{ and }\ \lim \limits_{k \to \infty} \vartheta_{O, k} = 0.\]
	Let 
	\[\pi \colon X \to \mathbb{S}^1, \qquad \pi(z,t) = t
	\]
be the projection. As $\supp\nu_1 \subset \{ p \} \times \mathbb{S}^1$ and $\supp\nu_2 \subset \{q\} \times \mathbb{S}^1$ and $g$, $R_\alpha$ are uniquely ergodic with invariant measures $\delta_0$ and $\Leb_{\mathbb S^1}$, respectively, it is enough to show that the limits of projected measures $\pi_*\vartheta_{p, k}$ and $\pi_*\vartheta_{q, k}$ are, respectively, $g$ and $R_\alpha$-invariant. 

	We have
	\[  \pi_*\vartheta_{p, k} =  \frac{1}{m_k}\sum\limits_{n=0}^{m_k - 1} \mathds{1}_{U_p}(f^n(z)) \, \delta_{\pi(T^n(z,t))}, \]
	Let 
	\[
	M_k = \sum\limits_{n=0}^{m_k - 1} \mathds{1}_{U_p}(f^n(z))
	\]
be the number of iterates $f^n(z)$ which are in $U_p$ up to time $m_k -1$ and let $(z_0, t_0), (z_1, t_1), \ldots$ be consecutive elements of the trajectory $\{T^n(z,t)\}_{n=0}^{\infty}$, such that $(z_j, t_j) \in U_p \times \mathbb{S}^1$. Then
\[ \pi_*\vartheta_{p, k} =  \frac{1}{m_k}\sum\limits_{j=0}^{M_k-1} \delta_{t_j}. \]
	Note that if $f(z_j) \in U_p$, then $t_{j+1} = g(t_j)$, so $\delta_{t_{j+1}} = g_* \delta_{t_j}$. Let $\mathcal A = \{ j \in \N : f(z_j) \notin U_{p} \}$. By Proposition~\ref{prop:N_i}, the set $\mathcal A$ has asymptotic density zero, as the time spent in $U_p$ by the trajectory of $z$ under $f$ during its $i$-th visit grows linearly with $i$, while during each visit only the last iterate is such that $f(z_j) \notin U_p$. We can therefore apply Lemma~\ref{lem:density_zero_limit} to conclude that the sequence $\frac{1}{M_k}\sum_{j=0}^{M_k-1} \delta_{t_j}$
	converges to a $g$-invariant probability measure, hence
	\[\lim \limits_{k \to \infty} \frac{1}{M_k}\sum\limits_{j=0}^{M_k-1} \delta_{t_j} = \delta_{0}.\]
	On the other hand, Corollary~\ref{cor:N_i} implies $\lim_{k \to \infty} \frac{M_k}{m_k} = \frac 1 2$, so
	\[\lim \limits_{k \to \infty} \pi_*\vartheta_{p, k} = \frac 1 2 \delta_0.\]
	By the same arguments we show
	\[\lim \limits_{k \to \infty} \pi_*\vartheta_{q, k} = \frac 1 2 \Leb_{\mathbb{S}^1}.\]
	Therefore, $\mu_m$ converges to $\mu$ in the weak-$^*$ topology and $\mu$ is a natural measure for $T$.

\end{proof}

\begin{rem}
To obtain a counterexample to the SSOY predictability conjecture in its original formulation, one can also perform a similar construction on a manifold with boundary $\B \times \mathbb{S}^1$, where $\B$ is a closed $2$-dimensional disc. Namely, it is enough to replace the diffeomorphism $f$ of $\mathbb{S}^2$ constructed in Subsection~\ref{subsec:S^2} with a diffeomorphism of $\B$, which is a suitable modification if the `Bowen's eye' example described e.g.~in \cite[Example 5.2.(B)]{Catsigeras14}, with properties similar to $f$.
\end{rem}

\bibliographystyle{alpha}
\bibliography{universal_bib}

\end{document}